\newtheorem{theorem}{Theorem}
\newtheorem{lemma}[theorem]{Lemma}
\newtheorem{corollary}[theorem]{Corollary}
\newtheorem{proposition}[theorem]{Proposition}
\theoremstyle{definition}
\newtheorem{remark}[theorem]{Remark}
\newtheorem*{claim}{Claim}
\numberwithin{equation}{section}\numberwithin{theorem}{section}
\newcounter{stepctr}
{\end{list}}
\def\XXint#1#2#3{{\setbox0=\hbox{$#1{#2#3}{\int}$}
 \vcenter{\hbox{$#2#3$}}\kern-.5\wd0}}
\DeclareMathOperator{\tr}{tr}
\DeclareMathOperator{\spt}{spt}
\newcommand{\e}{\varepsilon}
\DeclareMathOperator{\Div}{div}
\author{Huy The Nguyen}
\address{School of Mathematical Sciences\\
	Queen Mary University of London\\
	Mile End Road\\
	London E1 4NS}
\email{h.nguyen@qmul.ac.uk}
\author{Shengwen Wang} \address{School of Mathematical Sciences\\
	Queen Mary University of London\\
	Mile End Road\\
	London E1 4NS}\email{shengwen.wang@qmul.ac.uk}
\begin{document}
\title[Quantization for the Allen--Cahn Equation ]{Quantization of the Energy for the inhomogeneous Allen--Cahn mean curvature}
\begin{abstract}
We consider the varifold associated to the Allen--Cahn phase transition problem in $\mathbb R^{n+1}$(or $n+1$-dimensional Riemannian manifolds with bounded curvature) with integral $L^{q_0}$ bounds on the Allen--Cahn mean curvature (first variation of the Allen--Cahn energy) in this paper. It is shown here that there is an equidistribution of energy between the Dirichlet and Potential energy in the phase field limit and that the associated varifold to the total energy converges to an integer rectifiable varifold with mean curvature in $L^{q_0}, q_0 > n$. The latter is a diffused version of Allard's convergence theorem for integer rectifiable varifolds.
\end{abstract}
\maketitle
\section{Introduction}
Let $\Omega\subset\mathbb(M^{n+1},g)$ be an open subset in a Riemannian manifold with bounded curvature. Consider $u\in W^{2,p}(\Omega)$ satisfying the following equation
	\begin{align}\label{PFVe}
	\e\Delta u_\e -\frac{W'(u_\e)}{\e}=f_\e,
	\end{align}
where $W(t)=\frac{(1-t^2)^2}{2}$ is a double-well potential. The equation \eqref{PFVe} can be viewed as a prescribed first variation problem to the Allen--Cahn energy
	\begin{align*}
	E_\e(u_\e)=\int_\Omega\left( \frac{\e|\nabla u_\e |^2}{2}+\frac{W(u_\e)}{\e}\right)dx.
	\end{align*}
For any compactly supported test vector field $\eta\in C_c^\infty (\Omega,\mathbb R^{n+1})$, we have a variation $u_s(x)=u\left(x+s\eta(x)\right)$ and the first variation formula at $u_0=u_\e$ is given by
	\begin{align}\label{FirstVariationAC}
	\begin{split}
	\frac{d}{ds}\bigg|_{s=0}E_\e\left(u_s\right)&=\int_\Omega\left(-\e\Delta u_\e +\frac{W'(u_\e)}{\e}\right)\langle\nabla u_\e , \eta\rangle dx\\
	&=-\int_\Omega \left(\frac{f_\e}{\e|\nabla u_\e |}\right)\langle\nu,\eta \rangle\e|\nabla u_\e |^2dx,
	\end{split}
	\end{align}
where $\nu=\frac{\nabla u_\e }{|\nabla u_\e |}$ is a unit normal to the level sets at non-critical points of $u$.

By \cite{Modica1977}, \cite{Modica1987}, \cite{Sternberg1988} using the framework of \cite{DeGiorgi1979}, the sequence of functionals $E_\e$ $\Gamma$-converges to the $n$-dimensional area functional as $\e\rightarrow0$. This shows that minimizing solutions to \eqref{PFVe} with $f_\e =0$ converge as $ \e\rightarrow 0$ to area minimizing hypersurfaces.
For general critical points ($ f_\e=0$) a deep theorem of Hutchinson--Tonegawa \cite[Theorem 1]{Hutchinson2000} shows the diffuse varifold obtained by smearing out the level sets of $u$ converges to limit which is a stationary varifold with $a.e.$ integer density. The main result of this paper is to prove Hutchinson--Tonegawa's Theorem \cite[Theorem 1]{Hutchinson2000} in the context of natural integrability conditions on the first variation of $E_\e$. Under suitable controls on the first variation of the energy functional $E_\e$ (the diffuse mean curvature) we can show comparable behaviour for the limit. In the case where $n=2,3$ R\"oger--Sch\"atzle \cite{roger2006modified} have shown under the assumption
	\begin{align*}
	\liminf_{\e\rightarrow 0^+} \left( E_\e(u_\e)+ \frac{1}{\e} \| f_\e\|^2_{L^2(\Omega)} \right) < \infty
	\end{align*}
that the limit is an integer rectifiable varifold with $L^2$ generalised mean curvature.

The main focus of this paper is to generalise this result to higher dimensions. Before we state our main theorem, we give a choice of the diffused analogue of ``mean curvature" in the Allen--Cahn setting, which will be used to state our bounded $L^{q_0}$ Allen--Cahn mean curvature condition in the theorem.

Recall that for an embedded hypersurface $\Sigma^n\subset\Omega\subset\mathbb R^{n+1}$ restricted to a bounded domain $\Omega$ and a compactly supported variation $\Sigma_s$ with $\Sigma_0=\Sigma$, we have the first variation area at $s=0$ given by
	\begin{align}\label{FirstVariationHypersurface}
	\frac{d}{ds}\bigg |_{s=0}\operatorname{Area}(\Sigma_s\cap\Omega)=-\int_{\mathbb R^{n+1}} \langle\mathbf H,\eta\rangle d\mu_\Sigma=\int_{\mathbb R^{n+1}} H\langle\nu,\eta\rangle d\mu_\Sigma,
	\end{align}
where $H$ is the mean curvature scalar, $\mathbf H=-H\nu$ is the mean curvature vector, $\nu$ is a unit normal vector field, $\eta$ is the variation vector field, and $d\mu_\Sigma$ is the hypersurface measure. By comparing the first variation formula \eqref{FirstVariationAC} for Allen--Cahn energy and the first variation formula \eqref{FirstVariationHypersurface} for area , we can see that $\left(\frac{f_\e}{\e|\nabla u|}\right)$ roughly plays the role of the mean curvature scalar in the Allen--Cahn setting. In \cite{Allard1972}, a result of Allard implies that if a sequence of integral varifolds has $L^{q_0}$ integrable mean curvature scalar with $q_0>n$, then after passing to a subsequence, there is a limit varifold which is also integer rectifiable.

Under similar conditions on $L^{q_0}$ integrability of the term $\left(\frac{f_\e}{\e|\nabla u|}\right)$ with $q_0>n$, we prove the integer rectifiability of the limit of sequences of Allen--Cahn varifolds :
\begin{theorem}\label{main}
Let $u_{\e}\in W^{1,2}(\Omega), \Omega\subset\mathbb R^{n+1}$ satisfy equation \eqref{PFVe} with $\e\rightarrow0$ and $f_\e\in L^1(\Omega)$.
If any one of the following holds:
\begin{enumerate}
\item Bounds on the total energy
	\begin{align}
	\int_\Omega \left( \frac{\e|\nabla u_\e |^2}{2}+\frac{W(u_\e)}{\e}\right) dx\leq E_0;
	\end{align}
\item Uniform $L^\infty$ bounds
	\begin{align}
	 \|u_\e\|_{L^\infty(\Omega)}\leq c_0;
	\end{align}
\item $L^{q_0}$ bounds on the diffuse mean curvature
	\begin{align}\label{IntegralBound}
	\int_\Omega\left(\frac{|f_\e|}{\e|\nabla u_\e |}\right)^{q_0}\e|\nabla u_\e |^2dx \leq \Lambda_0
	\end{align}
 for some $q_0> n$;
\end{enumerate}
then after passing to a subsequence, we have for the associated varifolds (see \cite{Ilmanen1993} for the definition) $V_{u_\e}\rightarrow V_\infty$ weakly and
\begin{enumerate}
\item $V_\infty$ is an integral $n$-rectifiable varifold;
\item For any $B_r(x_0)\subset\subset\Omega$, the $L^{q_0}$ norm of the generalized mean curvature of $V_\infty$ is bounded by $\Lambda_0$;
\item The discrepancy measure $\left(\frac{\e|\nabla u_\e |^2}{2}-\frac{W(u_\e)}{\e}\right)dx\rightarrow0$ weakly, i.e. there is an equidistribution of energy as $\e\rightarrow0$ (c.f. Proposition \ref{VanishingDiscrepancy}).
\end{enumerate}
\end{theorem}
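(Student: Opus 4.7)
The plan is to adapt the Hutchinson--Tonegawa strategy \cite{Hutchinson2000} from the stationary setting ($f_\e=0$) to the present inhomogeneous one, with the integral bound \eqref{IntegralBound} substituting for stationarity. By hypothesis~(1) the energy measures $\mu_\e:=(\e|\nabla u_\e|^2/2+W(u_\e)/\e)\,dx$ are uniformly locally finite, so after a subsequence $\mu_\e\rightharpoonup\mu$ and $V_{u_\e}\rightharpoonup V_\infty$ as Radon measures and varifolds respectively. Combining the first variation formula \eqref{FirstVariationAC} with H\"older's inequality and \eqref{IntegralBound} gives, for every $\eta\in C_c^1(\Omega,\mathbb R^{n+1})$,
\[
|\delta V_{u_\e}(\eta)|\le \Lambda_0^{1/q_0}\left(\int_{\spt\eta}\e|\nabla u_\e|^2\,dx\right)^{1-1/q_0}\|\eta\|_\infty,
\]
which is uniformly bounded. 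A standard Fatou/duality argument for $L^{q_0}$ norms under weak varifold convergence then yields item~(2): $V_\infty$ has generalized mean curvature in $L^{q_0}(\|V_\infty\|)$ with total mass bounded by $\Lambda_0$ on compact subsets.

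For item~(3), the vanishing of the discrepancy, I would invoke Proposition~\ref{VanishingDiscrepancy} directly. Its proof follows the Modica--Mortola/Hutchinson--Tonegawa scheme: differentiate $\xi_\e:=\e|\nabla u_\e|^2/2 - W(u_\e)/\e$ using the PDE \eqref{PFVe}, producing a transport identity involving $f_\e|\nabla u_\e|$ and second-order terms on level sets, and integrate against test functions; the $L^{q_0}$ control on $f_\e/(\e|\nabla u_\e|)$ replaces the pointwise Modica inequality available when $f_\e=0$ and is just strong enough to push $\xi_\e\,dx\rightharpoonup 0$ in the Radon-measure sense.

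For item~(1), since $\delta V_\infty$ is now a Radon measure whose density is in $L^{q_0}$ with $q_0>n$, the Allard monotonicity formula yields a uniform positive lower bound on $\theta^n(\|V_\infty\|,x_0)$ at $\|V_\infty\|$-a.e.\ point and, via Allard's rectifiability theorem, countable $n$-rectifiability of the support. Integrality of the density is the main technical step: at $\|V_\infty\|$-a.e.\ $x_0$, choose scales $r_k,\e_k\to 0$ with $\e_k/r_k\to 0$ and blow up $\tilde u_k(y):=u_{\e_k}(x_0+r_k y)$. The vanishing discrepancy from item~(3) forces $|\nabla\tilde u_k|^2$ to concentrate on a tangent hyperplane $T$, and rescaling \eqref{IntegralBound} produces a factor $r_k^{\,n-n/q_0}\to 0$ (this is where $q_0>n$ is used crucially), so the inhomogeneous term drops out in the limit. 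Along the normal direction to $T$ the $\tilde u_k$ then converge to a finite superposition of translates of the one-dimensional heteroclinic solving $\phi''=W'(\phi)$, each carrying energy $\sigma:=\int_{-1}^{1}\sqrt{2W(t)}\,dt$, whence $\theta^n(\|V_\infty\|,x_0)\in\sigma\mathbb N$.

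The principal obstacle is precisely this blow-up/quantization step: one must rigorously identify the limiting rescaled profiles with a superposition of heteroclinics and exclude degenerate configurations in which $\tilde u_k$ oscillates between the wells without producing clean transitions. The threshold $q_0>n$ is sharp here, playing the same role as in Allard's $\e$-regularity theorem, ensuring that under the chosen rescaling the inhomogeneous forcing becomes subcritical and the local picture is reduced to that of a stationary one-dimensional phase transition.
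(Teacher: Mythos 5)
Your proposal correctly identifies the high-level skeleton of the paper's argument (compactness, bound on $\delta V_\e$ via H\"older and \eqref{IntegralBound}, vanishing discrepancy, density lower bound plus Allard rectifiability, and blow-up/quantization for integrality), and the rescaling computation producing a factor of order $r_k^{(q_0-n)/q_0}\to 0$ is indeed the reason $q_0>n$ is critical in the integrality step. However, the hardest and genuinely new part of the proof is the vanishing of the discrepancy, and the route you sketch for it does not work. There is no ``transport identity integrated against test functions'' that pushes $\xi_\e\to 0$; the absence of a pointwise Modica inequality when $f_\e\neq 0$ is exactly the obstruction. The paper first proves separate a priori estimates (Theorem \ref{GradientBound}) decoupling $f_\e$ from $|\nabla u_\e|$ --- an iterative Calder\'on--Zygmund bootstrap on the rescaled equation that upgrades the combined bound \eqref{IntegralBound} to $\|\tilde f\|_{L^{(n+1)/2+\delta_0}}\lesssim \e^{(q_0-n)/q_0}$, together with gradient/H\"older bounds for $u_\e$. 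Only then can it run the R\"oger--Sch\"atzle machinery (Lemma \ref{lem_L1discrepancydeltabound}): an auxiliary Dirichlet solution $\psi$ absorbing the forcing, a shifted solution $\tilde u_0=\tilde u+\psi$, a modified discrepancy $\xi_G=\tfrac{1}{2}|\nabla\tilde u_0|^2-W(\tilde u_0)-G_\delta(\tilde u_0)-\varphi$ with a carefully built function $G_\delta$ and a second auxiliary solution $\varphi$, and finally a maximum-principle argument on $\lambda\xi_G$. None of this appears in your sketch, and without the decoupling step you cannot even invoke the $L^{(n+1)/2+\delta_0}$ control on $f_\e$ that the auxiliary functions require.

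A second, smaller gap: you claim the density lower bound at $\|V_\infty\|$-a.e.\ point follows ``from Allard's monotonicity formula'' since $\delta V_\infty\in L^{q_0}$. Monotonicity for $V_\infty$ gives upper semicontinuity of the density ratio, not a positive lower bound; and one cannot apply Allard's rectifiability theorem before the lower density bound is in hand. In the paper (Theorem \ref{DensityLowerBound}) the lower bound is produced at the $\e$-level: one shows via the H\"older bound of Theorem \ref{GradientBound} that any point in a transition region ($|u_\e|\le 1-\tau$) already carries definite energy at scale $\e$, then propagates this to macroscopic scales using the almost-monotonicity formula \eqref{AlmostMonotonicity} together with the discrepancy decay from Lemma \ref{DiscrepancyEstimateAllScale} and Proposition \ref{prop_DiscrepancyEnergyRatioLowerBound}, all of which in turn rely on the vanishing-discrepancy machinery. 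So the discrepancy estimate is not a parallel ingredient to the density bound --- it is a prerequisite for it, and your proposal does not supply it.
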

This theorem shows we can prove a result analagous to Hutchinson--Tonegawa \cite{Hutchinson2000}, Tonegawa \cite{tonegawa2003integrality} and show as $\e\rightarrow 0$, the diffuse varifold associated to the Allen--Cahn functional converges to an integer rectifiable varifold. This has some similarities with Allard's compactness theorem for rectifiable varifolds and for integral varifolds but here the sequence consists of diffuse varifolds and hence we require stronger conditions on the proposed mean curvature. As we shall see in a later paper, these conditions are exactly what is required to prove a version of Allard's regularity theorem for Allen--Cahn varifolds. As an application, we have the following Corollary
\begin{corollary}\label{LqConditions}
If $u_\e$ satisfies \eqref{PFVe} and of one of the following conditions holds:
\begin{enumerate}
\item
 	\begin{align*}
	&\|f_\e\|_{L^s(\Omega)}\leq C_1\e^\frac{1}{2},\quad\text{ for some $2<s<n$}\\
	&\left\|\frac{f_\e}{\e|\nabla u_\e|}\right\|_{L^t(\Omega)}\leq C_2,\quad\text{ for some $t>\frac{n-2}{s-2}s>\max\{s,n-2\}$};
 \end{align*}
\item
	\begin{align*}
	\left\|\frac{f_\e}{\e|\nabla u_\e|}\right\|_{W^{1,p}(\Omega)}\leq C,\quad\text{ for some $p>\frac{n+1}{2}$},\quad\text{(c.f. \cite{Tonegawa2020})};
	\end{align*}
\item
 	\begin{align*}
	&\|f_\e\|_{L^2(\Omega)}\leq C_1\e^\frac{1}{2},\quad\text{ if the ambient dimension $n+1=2$},\quad\text{(c.f. \cite{roger2006modified})}\\
	&\left\|\frac{f_\e}{\e|\nabla u_\e|}\right\|_{L^\infty(\Omega)}\leq C_2,\quad\text{ if the ambient dimension $n+1\geq3$};
 \end{align*}
 \end{enumerate}
then after passing to a subsequence as $\e\rightarrow 0$, the associated varifolds $V_\e$ converge to an integral $n$-rectifiable varifold with generalized mean curvature in $L^{q_0}$ for some $q_0>n$.
\end{corollary}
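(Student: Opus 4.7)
The plan is to verify hypothesis (3) of Theorem \ref{main} — namely the $L^{q_0}$ bound \eqref{IntegralBound} on the diffuse mean curvature — under each of the three sets of assumptions, and then invoke Theorem \ref{main} directly. Writing $h_\e := f_\e/(\e|\nabla u_\e|)$, the key algebraic identity that unifies all three cases is
$$|h_\e|^{q_0}\,\e|\nabla u_\e|^2 = |h_\e|^{q_0-2}\,\frac{|f_\e|^2}{\e}\qquad (q_0\geq 2),$$
which converts information on $h_\e$ in some $L^t$ space together with $\e^{1/2}$-scaled $L^s$ information on $f_\e$ into the integral bound \eqref{IntegralBound}.

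The easiest case is (iii). When $n+1\geq 3$ I would combine the $L^\infty$ bound on $h_\e$ with the total energy bound $\int \e|\nabla u_\e|^2\,dx\leq 2E_0$ to obtain \eqref{IntegralBound} for any $q_0$; when $n+1=2$ I would take $q_0=2>n=1$ and apply the identity above to reduce \eqref{IntegralBound} to $\|f_\e\|_{L^2}^2/\e\leq C_1^2$, which is immediate from the hypothesis $\|f_\e\|_{L^2}\leq C_1\e^{1/2}$.

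Case (i) is where the identity displays its force. The plan is to apply H\"older's inequality with conjugate exponents $t/(q_0-2)$ and $s/2$, choose $q_0 = 2 + t(s-2)/s$ (which makes them conjugate), and observe that the hypothesis $t > s(n-2)/(s-2)$ is precisely the statement $q_0 > n$. The bound \eqref{IntegralBound} then reduces to $\|h_\e\|_{L^t}^{q_0-2}\,\|f_\e\|_{L^s}^2/\e \leq C_2^{q_0-2}C_1^2$, as required.

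I expect case (ii) to be the main obstacle, because a direct Sobolev embedding $W^{1,p}\hookrightarrow L^{p^*}$ gives control of $h_\e$ in $L^{p^*}$ against \emph{Lebesgue} measure, whereas the integral in \eqref{IntegralBound} is weighted by the diffuse measure $\e|\nabla u_\e|^2\,dx$, which concentrates as $\e\to0$. To close this gap I would follow \cite{Tonegawa2020}: combine the Sobolev embedding (which yields $h_\e\in L^{p^*}$ with $p^*>n+1$ whenever $p>(n+1)/2$) with the diffuse monotonicity formula for Allen--Cahn in order to transfer Lebesgue-$L^{p^*}$ integrability into $\mu_\e$-$L^{q_0}$ integrability for some $q_0>n$. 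Once \eqref{IntegralBound} has been verified in each of the three cases, Theorem \ref{main} delivers the conclusion of the corollary immediately.
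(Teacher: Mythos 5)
Your handling of cases (i) and (iii) with $n+1\geq 3$ matches the paper: for (i) you pick $q_0=2+t(s-2)/s$ and apply H\"older exactly as the paper does, and for (iii), $n+1\geq 3$ you correctly combine $\|h_\e\|_{L^\infty}$ with the energy bound. Two parts of your plan need attention, however.

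For case (iii) with $n+1=2$: your calculation $\int |h_\e|^2\,\e|\nabla u_\e|^2 = \|f_\e\|_{L^2}^2/\e\leq C_1^2$ does formally verify \eqref{IntegralBound} with $q_0=2>n=1$. But the proof of Theorem~\ref{main} in the paper relies in several places on lemmas stated only for $n\geq 2$ (e.g.\ Lemma~\ref{lem_L1discrepancydeltabound} requires $n+1\geq 3$ and Lemma~\ref{DiscrepancyEstimateAllScale} starts ``Let $n\geq 2$''), so you cannot simply invoke Theorem~\ref{main} for $n=1$ without checking its entire chain of lemmas extends to that dimension. The paper sidesteps this by citing \cite{roger2006modified} for the $n+1=2$ case, which is where that low-dimensional result was actually proved.

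For case (ii) you correctly diagnose the obstacle — $W^{1,p}$ control gives Lebesgue-$L^{p^*}$ integrability of $h_\e$, not integrability against the concentrating measure $\e|\nabla u_\e|^2 dx$ — and you flag that you would follow \cite{Tonegawa2020}. That leaves the actual mechanism unstated. The paper's step is to invoke a measure-theoretic Poincar\'e-type inequality (from \cite[5.12.4]{Ziemer1989} together with \cite[Theorems 3.7, 3.8]{Tonegawa2020}): for the Radon measure $d\mu=\e|\nabla u_\e|^2\,d\mathcal L^{n+1}$ one has
\begin{align*}
\left|\int \varphi\, d\mu\right| \leq c(n)\,K(\mu)\int |\nabla\varphi|\, d\mathcal L^{n+1}
\quad \forall\,\varphi\in C_c^1(\mathbb R^{n+1}),
\end{align*}
where $K(\mu)$ is a density bound. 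Applying this to $\varphi=|\phi|^{np/(n+1-p)}$ with $\phi=\psi\,h_\e$, then H\"older and the Euclidean Sobolev inequality, converts the $W^{1,p}$ bound on $h_\e$ into the required $\mu_\e$-weighted $L^{q_0}$ bound with $q_0=\frac{np}{n+1-p}>n$ (exactly where $p>\frac{n+1}{2}$ is used). Your ``monotonicity formula'' intuition is in the right direction — the density upper bound that supplies $K(\mu)$ ultimately comes from the almost-monotonicity formula — but the displayed inequality, rather than a direct use of monotonicity, is the missing ingredient that closes the gap.
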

\begin{proof}
\begin{enumerate}
\item To see the first condition implies the conditions in Theorem \ref{main}, we choose $q_0=\frac{t(s-2)}{s}+2$ ($q_0>n$ is satisfied due to the choice of $t$ and $s$ above). Then we have
\begin{align*}
\int_\Omega \left|\frac{f_\e}{\e|\nabla u_\e |}\right|^{q_0}\e|\nabla u_\e |^2dx&=\int_\Omega \left|\frac{f_\e}{\e|\nabla u_\e |}\right|^{q_0-2}\frac{|f_\e|^2}{\e}dx\\
	&\leq \frac{1}{\e}\left(\int_\Omega\left|\frac{f_\e}{\e|\nabla u_\e |}\right|^\frac{(q_0-2)s}{s-2}\right)^\frac{s-2}{s}\left(\int_\Omega f_\e^{2\frac{s}{2}}\right)^\frac{2}{s}\\
	&=\frac{1}{\e}\cdot\left\|\frac{f_\e}{\e |\nabla u_\e |}\right\|_{L^t(\Omega)}^{q_0-2}\cdot\|f_\e\|_{L^s(\Omega)}^2\\
	&\leq C_1^2C_2^{q_0-2}\leq\Lambda_0
	\end{align*}
where we used H\"older's inequality in the second line with exponent $\frac{s}{s-2}$.
\item In the paper \cite{Tonegawa2020}, assuming condition (2) above, the authors proved the same integer rectifiability and $L^{q_0}$ mean curvature bound for the limit varifold. We show this conditions implies the integral bounds in the hypothesis of Theorem \ref{main} for some $q_0>n$. To see this, we compute
	\begin{align*}
	\nabla \left(\phi^{\frac{np}{n+1-p}} \right) = \frac{np}{n+1-p} \phi^{\frac{(n+1)(p-1)}{n+1-p}} \nabla \phi.
	\end{align*}
and applying \cite[5.12.4]{Ziemer1989}(c.f.\cite[Theorem 3.7]{Tonegawa2020}) and \cite[Theorem 3.8]{Tonegawa2020}, and H\"older's inequality, with $\varphi = \phi^{\frac{np}{n+1-p}}$ and $ d \mu = \e |\nabla u_\e|^2 d\mathcal L^{n+1}$.
	\begin{align*}
	\left|\int_{\mathbb{R}^n}\varphi d \mu\right| \leq c(n) K(\mu) \int_{\mathbb{R}^n}|\nabla\varphi | d \mathcal{L}^n \quad \forall \varphi \in C_c^1\left(\mathbb{R}^{n+1}\right)
	\end{align*}
which implies
	\begin{align*}
	\left| \int_{\mathbb R^{n+1}} |\phi|^{\frac{np}{n+1-p}} \e |\nabla u_\e |^2 d \mathcal L^{n+1} \right|&\leq \left|\int_{\mathbb R^{n+1}}\varphi d \mu \right| \\
	&\leq C(n)K(\mu)\left|\int_{\mathbb R^{n+1}} \frac{np}{n+1-p} |\nabla \phi| |\phi|^{\frac{(n+1)(p-1)}{n+1-p}}d \mathcal L^{n+1}\right|\\
	&\leq C(n,p)K(\mu)\left| \int_{\mathbb R^{n+1}} |\nabla \phi| |\phi|^{\frac{(n+1)(p-1)}{n+1-p}} d \mathcal L^{n+1}\right|\\
	&\leq C(n,p) \left( \int_{\mathbb R^{n+1}}|\nabla \phi |^p \right)^{1/p} \left( \int_{\mathbb R^{n+1}}|\phi|^{\frac{p(n+1)}{n+1-p}} \right)^\frac{p-1}{p}\\
	&= C(n,p) \|\nabla \phi \|_{L^p(\mathbb R^{n+1})} \|\phi\|_{L^{\frac{p(n+1)}{n+1-p}}(\mathbb R^{n+1})}^{\frac{(p-1)(n+1)}{n+1-p}}.
	\end{align*}
where $C(n,p)\rightarrow \infty$ as $p\rightarrow n+1$. We apply the above inequality with $ \phi =\psi \frac{f_\e}{\e |\nabla u_\e|}$ and $ d \mu = \e |\nabla u_\e|^2$ together the Sobolev inequality to get for $\psi\in C_0^1(\Omega)$
	\begin{align*}
	\int_\Omega\left|\psi\frac{f_\e}{\e|\nabla u_\e |}\right|^{\frac{pn}{n+1-p}}\e|\nabla u_\e |^2 d \mathcal L^{n+1}&\leq C\left\|\nabla\left(\psi\frac{f_\e}{\e|\nabla u_\e |}\right)\right\|_{L^p(\Omega)}\left\|\psi\frac{f_\e}{\e|\nabla u_\e |}\right\|_{L^\frac{p(n+1)}{n+1-p}(\Omega)}^\frac{(p-1)(n+1)}{n+1-p}\\
	&\leq C\left\|\nabla\left(\psi\frac{f_\e}{\e|\nabla u_\e |}\right)\right\|_{L^p(\Omega)}\left\|\nabla\left(\psi\frac{f_\e}{\e|\nabla u_\e |}\right)\right\|^\frac{(p-1)(n+1)}{n+1-p}_{L^p(\Omega)}\\
	&\leq C_\psi\left\|\frac{f_\e}{\e|\nabla u_\e |}\right\|_{W^{1,p}(\Omega}
	\end{align*}
where we have $q_0=\frac{pn}{n+1-p}>n$ since $p>\frac{n+1}{2}$.
\item If $n+1=2$ then this is proven in \cite{roger2006modified}. For $n+1\geq 3$ it can be directly verified that the condition (3) implies the the conditions in Theorem \ref{main}.
\end{enumerate}
\end{proof}
	 Here we give an overview of our proof. In Section \ref{sec_Prelim}, we gather together some standard notation on varifolds and the first variation. In section \ref{sec_Discrepancy}, we prove the main estimates required for the proof of the integrality and rectifiability. Specifically we will need a monotonicity formula. For the homogeneous Allen--Cahn equation and Allen--Cahn flow, a strict monotonicity formula can be proven due to Modica's estimate showing the discrepancy is negative. This estimate is not true without a homogeneous left hand side to equation \eqref{PFVe}. Instead we will use the integral bound \eqref{IntegralBound} to derive a decay bound for $L^1$ norm of the discrepancy which we eventually show vanishes in the limit $\e\rightarrow 0$. This estimate constitutes one of the main advances of this paper. In section \ref{sec:rectifiability} we show the limiting varifold we obtain as $\e\rightarrow 0$ is a rectifiable set and in section \ref{sec:integrality} we show the limiting varifold is in addition integral.

\textbf{Acknowledgements.}
The first author was supported by EPSRC grant EP/S012907/1. The second author was supported by EPSRC grants EP/S012907/1 and EP/T019824/1.

\section{Preliminaries and notations}\label{sec_Prelim}
Throughout the paper, we will denote a constant by $C$ if it only depend on the constants $n,E_0,c_0,\Lambda_0$ which appear in the conditions of Theorem \ref{main}. A certain points we may increase this constant in some steps of the argument, but we will not relabel the constant unless there is a risk of confusion from the context.
We associate to each solution of \eqref{PFVe} a varifold in the following way : let $G(n+1,n)$ denote the Grassmannian (the space of unoriented $n$-dimensional subspaces in $ \mathbb R^{n+1}$). We regard $S\in G(n+1,n)$ as the $(n+1)\times(n+1)$ matrix representing orthogonal projection of $ \mathbb R^{n+1}$ onto $S$, that is
	\begin{align*}
	S^2=S, \quad S^TS=I
	\end{align*}
and write $ S_1\cdot S_2 = \tr(S_1^T\cdot S_2)$.
We say $V$ is an $n$-varifold in $\Omega\subset \mathbb R^{n+1}$ if $V$ is a Radon measure on $ G_n(\Omega) = \Omega \times G(n+1,n)$. Varifold convergence means convergence of Radon measures or weak-$*$ convergence. We let $V\in\mathbb V_n(\Omega)$ and let $\|V\|$ denote the weight measure of $V$ and we define the first variation of $V$ by
	\begin{align*}
	\delta V(\eta) \equiv \int_{G_n(\Omega)}\nabla \eta(x) \cdot S dV(x) \quad \forall \eta \in C_c^1(\Omega;\mathbb R^{n+1}).
	\end{align*}
We let $ \| \delta V\|$ be the total variation of $\delta V$. If $\|\delta V\|$ is absolutely continuous with respect to $\|\delta V\|$ then the Radon--Nikodym derivative $\frac{\delta V}{\| V\|}$ exists as vector valued measure. We denote by $ H_V = -\frac{\delta V}{\|V\|}$, the generalised mean curvature.

Let $ u$ be a function, we define the associated energy measure as a Radon measure given by
	\begin{align*}
	d\mu \equiv \left( \frac{\e |\nabla u|^2}{2}+ \frac{W(u_\e)}{\e}\right) d \mathcal L^{n+1}
	\end{align*}
where $ \mathcal L^{n+1}$ is the $(n+1)$ dimensional Lebesgue measure. We also denote the the energy of the $1$ dimensional solution by
	\begin{align*}
	\sigma = \int_{-1}^1\sqrt{2 W(s)} ds.
	\end{align*}
There is an associated varifold $ V \in \mathbb V_n(\Omega)$ to the functions $u$ given by
	\begin{align*}
	V(\phi)&= \int_{\{| \nabla u |\neq 0\}} \phi \left(x,\left(\frac{\nabla u(x)}{|\nabla u(x)|}\right)^\perp\right) d\mu(x) \\
	&= \int_{\{| \nabla u |\neq 0\}} \phi \left(x,I - \frac{\nabla u(x)}{|\nabla u(x)|}\otimes\frac{\nabla u(x)}{|\nabla u(x)|}\right) d\mu(x), \quad \phi \in C_c(G_n(\Omega)).
	\end{align*}
where $I$ is the $(n+1)\times(n+1)$ identity matrix and
	\begin{align*}
	I - \frac{\nabla u(x)}{|\nabla u(x)|}\otimes\frac{\nabla u(x)}{|\nabla u(x)|}
	\end{align*}
is orthogonal projection onto the space orthogonal to $\nabla u(x)$, that is $ \{a\in \mathbb R^{n+1}\mid \langle a, \nabla u(x)\rangle = 0 \}.$ By definition $\| V \| = \mu \llcorner_{\{|\nabla u| \neq 0 \} }$ and the first variation may be computed as
	\begin{align}\label{eqn_FirstVariationGeneral}
	\delta V( \eta) = \int_{\{| \nabla u |\neq 0\}} \nabla \eta \cdot \left(I - \frac{\nabla u(x)}{|\nabla u(x)|}\otimes\frac{\nabla u(x)}{|\nabla u(x)|}\right) d\mu(x), \quad \forall \eta \in C_c^1(\Omega;
\mathbb R^{n+1}).
	\end{align}
\section{Discrepancy bounds and monotonicity formula}\label{sec_Discrepancy}
In this section, we deduce integral bounds on the discrepancy. There exists an almost monotonicity formula for the Allen--Cahn energy functional, we will give estimates on the terms appearing in the almost monotonicity formulas under the assumptions in Theorem \ref{main} and obtain a monotonicity formula for the $n$-dimensional volume ratio. It will be used in the next section to deduce rectifiability and integrality of the limit varifold as $\e\rightarrow0$. Conditions $(1)$-$(3)$ in Theorem \ref{main} are assumed to hold throughout this section.

The $n$-dimensional volume ratio of the energy measure satisfies the following almost monotonicity formula.
\begin{proposition}[Almost Monotonicity Formula]
If $u_\e$ satisfies \eqref{PFVe} in $B_1\subset\mathbb R^{n+1}$, then for $r<1$, we have
	\begin{align}\label{AlmostMonotonicity}
	\frac{d}{dr}\left(\frac{\mu_\e(B_r)}{r^n}\right)=-\frac{1}{r^{n+1}}\xi(B_r)+\frac{\e}{r^{n+2}}\int_{\partial B_r}\langle x,\nabla u_\e \rangle^2-\frac{1}{r^{n+1}}\int_{B_r}\langle x,\nabla u_\e \rangle f_\e.
	\end{align}
Here $\mu_\e(B_r)=\int_{B_r}d\mu_\e=\int_{B_r}\left(\frac{\e|\nabla u_\e |^2}{2}+\frac{W(u_\e)}{\e}\right)$ is the total energy and $\xi(B_r)=\int_{B_r}\left(\frac{\e|\nabla u_\e |^2}{2}-\frac{W(u_\e)}{\e}\right)$ is the discrepancy measure (difference between the Dirichlet and potential energy) in $B_r$.
\end{proposition}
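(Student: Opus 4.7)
The identity is a stress--energy (Pohozaev-type) computation tested against the radial field $\eta(x) = x$, rendered inhomogeneous by the presence of $f_\e$. The plan is to introduce the Allen--Cahn stress tensor
	\begin{align*}
	T_{ij} = \left(\frac{\e|\nabla u_\e|^2}{2} + \frac{W(u_\e)}{\e}\right)\delta_{ij} - \e\,\partial_i u_\e\,\partial_j u_\e
	\end{align*}
and compute its divergence. The standard cancellation between $\e\,\partial_k u_\e\,\partial_j\partial_k u_\e$ (coming from differentiating the gradient energy density) and $\e\,\partial_i u_\e\,\partial_i\partial_j u_\e$ (from the Hessian piece) combined with \eqref{PFVe} gives
	\begin{align*}
	\partial_i T_{ij} = \left(\frac{W'(u_\e)}{\e} - \e\Delta u_\e\right)\partial_j u_\e = -f_\e\,\partial_j u_\e,
	\end{align*}
which is the sole source of the inhomogeneous term in the monotonicity formula.

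Next I multiply by $x_j$, integrate over $B_r$, and apply the divergence theorem to obtain
	\begin{align*}
	\int_{\partial B_r} x_j T_{ij}\nu_i\,d\mathcal H^n - \int_{B_r}\tr T\,dx = -\int_{B_r}\langle x,\nabla u_\e\rangle f_\e\,dx,
	\end{align*}
with $\nu = x/r$ on $\partial B_r$. A direct calculation gives
	\begin{align*}
	\tr T = n\!\left(\frac{\e|\nabla u_\e|^2}{2} + \frac{W(u_\e)}{\e}\right) - \left(\frac{\e|\nabla u_\e|^2}{2} - \frac{W(u_\e)}{\e}\right),
	\end{align*}
so $\int_{B_r}\tr T = n\mu_\e(B_r) - \xi(B_r)$. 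Substituting $\nu_i = x_i/r$ and using $|x|^2=r^2$ on $\partial B_r$, the boundary integrand becomes
	\begin{align*}
	\nu_i x_j T_{ij} = r\!\left(\frac{\e|\nabla u_\e|^2}{2} + \frac{W(u_\e)}{\e}\right) - \frac{\e}{r}\langle x,\nabla u_\e\rangle^2,
	\end{align*}
whose integral over $\partial B_r$ equals $r\mu_\e'(B_r) - (\e/r)\int_{\partial B_r}\langle x,\nabla u_\e\rangle^2$, where $\mu_\e'(B_r) = \frac{d}{dr}\mu_\e(B_r)$ by the coarea formula.

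Assembling these pieces yields
	\begin{align*}
	r\mu_\e'(B_r) - n\mu_\e(B_r) = -\xi(B_r) + \frac{\e}{r}\int_{\partial B_r}\langle x,\nabla u_\e\rangle^2 - \int_{B_r}\langle x,\nabla u_\e\rangle f_\e,
	\end{align*}
and dividing by $r^{n+1}$ gives the claimed formula, since the left-hand side equals $r^{n+1}\frac{d}{dr}(\mu_\e(B_r)/r^n)$. The one technical point is justifying the divergence theorem on the sharp ball at $W^{2,p}$ regularity, which I would handle by testing against a smooth radial cutoff $\chi_\delta(|x|)$ approximating $\chi_{B_r}$ and sending $\delta\to 0$: by coarea the limit holds for a.e.\ $r$, and continuity in $r$ then extends it to all $r\in(0,1)$. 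No new idea is required beyond Modica's classical calculation; the entire novelty of the inhomogeneous case is carried by the non-vanishing divergence $\partial_i T_{ij} = -f_\e\,\partial_j u_\e$.
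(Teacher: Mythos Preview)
Your proof is correct and is essentially the same Pohozaev-type computation carried out in the paper: the paper multiplies \eqref{PFVe} by $\langle x,\nabla u_\e\rangle$ and integrates by parts directly, while you package the same identity via the stress--energy tensor $T_{ij}$ (a formulation the paper itself uses later in Proposition~\ref{prop_FirstVariation}). The algebra and the final division by $r^{n+1}$ are identical in content, so there is no substantive difference in approach.
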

\begin{proof}
Multiplying equation \eqref{PFVe} by $\langle x,\nabla u_\e\rangle$ and integrating by parts on $B_r$, we get
	\begin{align*}
	&\int_{B_r}\langle x,\nabla u_\e \rangle f_\e\\
	&=\int_{B_r}\e\Delta u_\e \langle x,\nabla u_\e \rangle-\int_{B_r}\left\langle\frac{\nabla(W(u_\e))}{\e}, x\right\rangle\\
	&=\int_{\partial B_r}\left(\e r\left|\frac{\partial u_\e }{\partial\nu}\right|^2-r\frac{W(u_\e)}{\e}\right)-\int_{B_r}\left(\e\delta_{ij}u_{x_i}u_{x_j}+\e\nabla^2u(\nabla u_\e ,x)-\frac{(n+1)W(u_\e)}{\e}\right)\\
	&=\int_{\partial B_r}\left(\e r\left|\frac{\partial u_\e }{\partial\nu}\right|^2-r\frac{W(u_\e)}{\e}\right)-\int_{B_r}\left(\e|\nabla u_\e |^2+\e\left\langle\nabla\frac{|\nabla u_\e |^2}{2}, x\right\rangle-\frac{(n+1)W(u_\e)}{\e}\right)\\
	&=r\int_{\partial B_r}\left(\e \left|\frac{\partial u_\e }{\partial\nu}\right|^2-\frac{W(u_\e)}{\e}-\e\frac{|\nabla u_\e |^2}{2}\right)+\int_{B_r}\left(\e\frac{(n-1)|\nabla u_\e |^2}{2}+\frac{(n+1)W(u_\e)}{\e}\right)\\
	&=n\int_{B_r}\left(\frac{\e|\nabla u_\e |^2}{2}+\frac{W(u_\e)}{\e}\right)-r\int_{\partial B_r}\left(\frac{\e|\nabla u_\e |^2}{2}+\frac{W(u_\e)}{\e}\right)+\frac{\e}{r}\int_{\partial B_r}\langle x,\nabla u_\e \rangle^2-\xi(B_r).
	\end{align*}
The conclusion then follows by dividing both sides by $r^{n+1}$ and noticing
	\begin{align*}
	\frac{d}{dr}\left(\frac{\mu(B_r)}{r^n}\right)=-\frac{n}{r^{n+1}}\int_{B_r}\left(\frac{\e|\nabla u|^2}{2}+\frac{W(u_\e)}{\e}\right)+\frac{1}{r^n}\int_{\partial B_r}\left(\frac{\e|\nabla u|^2}{2}+\frac{W(u_\e)}{\e}\right).
	\end{align*}
\end{proof}
Integrating the almost monotonicity formula \eqref{AlmostMonotonicity} from $\e$ to $r_0$ for $0<\e<r_0<1$, we have
	\begin{align}\label{IntegralAlmostMonotonicity2}
	&\frac{\mu_\e(B_{r_0})}{r_0^n}-\frac{\mu_\e(B_\e)}{\e^n}\\\nonumber
&=\int_\e^{r_0}\left(-\frac{1}{r^{n+1}}\xi(B_r)+\frac{\e}{r^{n+2}}\int_{\partial B_r}\langle x,\nabla u_\e \rangle^2-\frac{1}{r^{n+1}}\int_{B_r}\langle x,\nabla u_\e \rangle f_\e\right)dr\\\nonumber
&\geq -r_0\sup_{B_{r_0}}\omega_{n+1}\left(\frac{\e|\nabla u_\e |^2}{2}-\frac{W(u_\e)}{\e}\right)_+ + \int_{B_{r_0}\setminus B_\e }\frac{\e\langle x,\nabla u_\e \rangle^2}{|x|^{n+2}}-\int_\e^{r_0}\frac{1}{r^{n+1}}\int_{B_r}\langle x,\nabla u_\e \rangle f_\e dr,
	\end{align}
where $\omega_{n+1}$ denotes the volume of unit ball in $\mathbb R^{n+1}$.	
	
We need to estimate the first and third term on the right hand side to obtain a monotonicity formula.
In order to estimate the third term, we derive an a priori gradient bound for $u$. Condition $(3)$ of Theorem \ref{main} states a combined integrability for the inhomogeneity $f_\e$ and $|\nabla u|$. The following theorem allows us to obtain separate integrability and regularity for each quantity.
\begin{theorem}\label{GradientBound}
There exists $ C, \e_0>0$ depending on $E_0,c_0,\Lambda_0$ as defined in Theorem \ref{main} such that if $u_\e$ satisfies \eqref{PFVe} in $B_1\subset\mathbb R^{n+1}$ with $\e<\e_0$ and if $q_0>n+1$, then
	\begin{align}\label{GradBound}
	\sup_{B_{1-\e}}\e|\nabla u_\e |\leq C,
	\end{align}
and
	\begin{align}\label{GradHolderBound}
	\e^{2-\frac{n+1}{q_0}}\|u_\e\|_{C^{1,1-\frac{n+1}{q_0}}(B_{1-\e})}\leq C.
	\end{align}
If $n<q_0\leq n+1$, then
	\begin{align}\label{HolderBound}
	\e^\frac{1}{2}\|u_\e\|_{C^{0,\frac{1}{2}}(B_{1-\e})}\leq C.
	\end{align}
Furthermore, there exists a $\delta_0>0$ so that $f$ has the following improved integrability
	\begin{align}\label{fLpBound}
	\| f_\e\|_{L^{\frac{n+1}{2}+\delta_0}(B_{1-\e}(x_0))}\leq C\e^{-\frac{n}{q_0}}.
	\end{align}
\end{theorem}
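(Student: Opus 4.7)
The plan is to rescale to the natural $\e$-scale of the Allen--Cahn equation and then combine interior Calder\'on--Zygmund estimates with a bootstrap exploiting the mixed integral of condition (3). For $x_0 \in B_{1-\e}$ set $v(y) := u_\e(x_0 + \e y)$ on $B_1$; then
\[
\Delta v - W'(v) = \tilde f \qquad \text{on } B_1, \qquad \tilde f(y) := \e\, f_\e(x_0 + \e y),
\]
with $\|v\|_{L^\infty(B_1)} \leq c_0$ from condition (2) and hence $|W'(v)| \leq C(c_0)$. The change of variables $x = x_0 + \e y$ transforms condition (3) into
\[
\int_{B_1} |\tilde f|^{q_0}\,|\nabla v|^{2-q_0}\,dy \leq \Lambda_0\,\e^{q_0-n},
\]
whose right-hand side is small because $q_0 > n$; this smallness is the key device.

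For the gradient bound \eqref{GradBound} and the H\"older estimate \eqref{GradHolderBound} in the regime $q_0 > n+1$, I would run a continuity/bootstrap argument on $M := \|\nabla v\|_{L^\infty(B_{3/4})}$, which is a priori finite for each fixed $\e$ by classical interior regularity. Since $q_0 > 2$, one has $|\nabla v|^{2-q_0} \geq M^{2-q_0}$, giving $\|\tilde f\|_{L^{q_0}(B_{3/4})} \leq M^{(q_0-2)/q_0}(\Lambda_0 \e^{q_0-n})^{1/q_0}$. Applying interior Calder\'on--Zygmund to $\Delta v = W'(v) + \tilde f$ and Morrey's embedding $W^{2,q_0} \hookrightarrow C^{1,\,1-(n+1)/q_0}$ (which requires $q_0 > n+1$) then produces
\[
\|v\|_{C^{1,\,1-(n+1)/q_0}(B_{1/2})} \leq C_1 + C_2\, M^{(q_0-2)/q_0}\, \e^{(q_0-n)/q_0}.
\]
Since $(q_0-2)/q_0 < 1$, Young's inequality absorbs the $M^{(q_0-2)/q_0}$ term, and for $\e \leq \e_0$ small enough the smallness factor $\e^{(q_0-n)/q_0}$ closes the continuity loop, yielding a uniform bound independent of $M$. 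Rescaling back from $v$ to $u_\e$ produces \eqref{GradBound} and \eqref{GradHolderBound}.

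For \eqref{HolderBound} in the regime $n < q_0 \leq n+1$, I would run the same scheme but replace Morrey by $W^{2,q_0}(B_{1/2}) \hookrightarrow C^{0,\,2-(n+1)/q_0}(B_{1/2})$; for $n \geq 2$ the H\"older exponent satisfies $2-(n+1)/q_0 \geq 1/2$ since $q_0 > n \geq 2(n+1)/3$. Because a pointwise bound on $\nabla v$ is no longer available, the bootstrap must be carried on an integral quantity $\|\nabla v\|_{L^r}$ obtained from the intermediate $W^{2,p}$ estimate via Sobolev, used in H\"older to improve $\|\tilde f\|_{L^p}$ stepwise with $p \nearrow q_0$; the smallness $\e^{q_0-n}$ again forces the iteration to close with a uniform constant.

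Finally, for \eqref{fLpBound}: the bootstrap actually produces $\|\tilde f\|_{L^{q_0}(B_1)} \leq C\e^{(q_0-n)/q_0}$. Rescaling back gives $\|f_\e\|_{L^{q_0}(B_\e(x_0))} \leq C\e^{1/q_0}$ on each ball, and covering $B_{1-\e}(x_0)$ by $O(\e^{-(n+1)})$ such balls and summing $q_0$-th powers yields $\|f_\e\|_{L^{q_0}(B_{1-\e})} \leq C\e^{-n/q_0}$; H\"older on the bounded domain then reduces this to the stated $L^{(n+1)/2+\delta_0}$ estimate for any $\delta_0 > 0$ with $(n+1)/2+\delta_0 < q_0$. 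The main obstacle is closing the continuity/bootstrap step, namely verifying that the Calder\'on--Zygmund and Morrey constants are genuinely uniform in $\e$ and that the absorption of the $M$-dependence leaves a constant depending only on $n, c_0, E_0, \Lambda_0$. A cleaner alternative would be a direct blow-up argument: assuming $\e_k|\nabla u_{\e_k}|(x_k) \to \infty$, rescale at $x_k$; the smallness $\e^{q_0-n} \to 0$ would force the limit to be a nontrivial bounded solution of the homogeneous Allen--Cahn equation with unbounded gradient, contradicting classical interior regularity.
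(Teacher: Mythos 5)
Your high-level strategy---rescaling to the $\e$-scale so that condition (3) becomes $\int_{B_1}|\tilde f|^{q_0}|\nabla v|^{2-q_0}\leq\Lambda_0\e^{q_0-n}$ and exploiting the smallness from $q_0>n$, then combining H\"older with Calder\'on--Zygmund to bootstrap integrability of $\tilde f$ and $\nabla v$---is the right one and matches the paper; the H\"older splitting $\|\tilde f\|_{L^p}\leq(\Lambda_0\e^{q_0-n})^{1/q_0}\|\nabla v\|_{L^{(q_0-2)p/(q_0-p)}}^{(q_0-2)/q_0}$ you use implicitly is exactly \eqref{fLp}. But the argument does not close as written. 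For $q_0>n+1$ you bound $\|\nabla v\|_{L^\infty(B_{1/2})}$ in terms of $M=\|\nabla v\|_{L^\infty(B_{3/4})}$ on a strictly larger ball, so Young's inequality has nothing to cancel against; ``closes the continuity loop'' is not an argument, and the naive attempt only yields $A(1/2)\leq C+\tfrac12 A(3/4)$ with no way to conclude. This can be repaired with the standard nested-ball absorption lemma (apply Young with conjugate exponents $q_0/(q_0-2)$ and $q_0/2$, keeping the coefficient of $A(r)$ a fixed constant $<1$ and sending the $(r-r')^{-\gamma}$ Calder\'on--Zygmund singularity into the error term), but you must say so. The ``cleaner'' blow-up alternative is circular: to identify the blow-up limit as a solution of the homogeneous Allen--Cahn equation you would need $\tilde f_k$ to vanish in some $L^p$, but the only control available is the $|\nabla v_k|^{2-q_0}$-weighted bound, which cannot be converted into an $L^{q_0}$ bound on $\tilde f_k$ without the very gradient bound you are trying to prove.

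There is a second, independent gap: your bootstrap has no starting point. To run the H\"older step you must already know $\nabla v\in L^{p_0}$ on unit rescaled balls with an $\e$-uniform constant, and the global energy bound does not provide this---energy can concentrate. The paper supplies exactly this with the Claim inside the proof: testing the rescaled equation against a cutoff and applying Young's inequality (with exponent $q_0$) directly to the mixed quantity $\int|\tilde f|^{q_0}\e^{n-q_0}|\nabla\tilde u|^{2-q_0}$ yields an $\e$-uniform $L^2$ gradient bound on every unit ball, avoiding any a priori reliance on the finiteness of $M$ entirely. Once that is in hand the paper runs the same Calder\'on--Zygmund/Sobolev iteration you propose; note however that for $n<q_0\leq n+1$ the iteration saturates, giving $\|\tilde f\|_{L^p}$ only for $p$ strictly below $q_0$ (but $\geq\tfrac{2(n+1)}{3}>\tfrac{n+1}{2}$, which is all that \eqref{fLpBound} requires), so your claim that $p\nearrow q_0$, and the covering argument for \eqref{fLpBound} via an $L^{q_0}$ bound on $\tilde f$, are only available in the regime $q_0>n+1$.
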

\begin{proof}
We first consider the case $q_0>n+1$:
Define the rescaled solution $\tilde u(x):=u(\e x)$ and $\tilde f(x)=\e f_\e(\e x)$ which satisfies the equation
	\begin{align}\label{RescaledEquation}
	\Delta\tilde u-W'(\tilde u)=\tilde f,\quad\text{ in $B_\frac{1}{\e}\subset\mathbb R^{n+1}$}.
	\end{align}
By condition (3) in Theorem \ref{main}, we have by rescaling
	\begin{align}\label{RescaledLqCondition}
	\int_{B_\frac{1}{\e}}\tilde f^{q_0}\e^{n-q_0}|\nabla\tilde u|^{2-q_0}=\int_{B_{\frac{1}{\e}}}\e^{-2q_0}\tilde f^{q_0}\e|\nabla \tilde u|^{2-q_0}\e^{q_0-2}\e^{n+1}=\int_{B_1}\e^{-q_0}f^{q_0}\e|\nabla u|^{2-q_0}\leq\Lambda_0.
	\end{align}
\begin{claim}
For any $\bar B_1(x_0)\subset B_{\frac{1}{\e}-1}$, we have
	\begin{align*}
	\|\nabla\tilde u\|_{L^2(B_1(x_0))}\leq C(c_0,\Lambda_0,q_0,n).
	\end{align*}
\end{claim}
\begin{proof}[Proof of Claim]
By the hypothesis $\bar B_1(x_0)\subset B_{\frac{1}{\e}-1}$ we have $B_2(x_0)\subset B_\frac{1}{\e}$. We choose a smooth cutoff function $\phi\in C_c^\infty\left(B_2(x_0)),[0,1]\right)$ with $\phi\equiv1$ in $B_1(x_0)$ and $|\nabla\phi|\leq4$. By integration by parts and Young's inequality, we obtain
	\begin{align}\label{eqn_L2Bound}
	\begin{split}
\int_{B_2(x_0)}|\nabla\tilde u|^2\phi^2&\leq\int_{B_2(x_0)}2c_0|\nabla\tilde u||\phi||\nabla\phi|+\int_{B_2(x_0)}c_0\phi^2|\Delta\tilde u|\\
	&\leq\int_{B_2(x_0)}2c_0|\nabla\tilde u||\phi||\nabla\phi|+\int_{B_2(x_0)}c_0\phi^2|W'(\tilde u)|+\int_{B_2(x_0)}c_0\phi^2|\tilde f|\\
	&\leq\frac{1}{2}\int_{B_2(x_0)}|\nabla\tilde u|^2\phi^2+\int_{B_2(x_0)}2c_0^2|\nabla\phi|^2+\int_{B_2(x_0)}c_0\phi^2 C_{c_0}+\int_{B_2(x_0)}c_0\phi^2|\tilde f|.
\end{split}
	\end{align}
We write $ c_0 \phi^2 | \tilde f| = c_0| \tilde f |\e^{\frac{n}{q_0}-1}|\nabla\tilde u|^{\frac{2}{q_0}-1} \times \phi^2\e^{1-\frac{n}{q_0}}|\nabla\tilde u|^{1-\frac{2}{q_0}}$ and use Young's inequality with exponent $q_0$ to get
	\begin{align*}
	\int_{B_2(x_0)}c_0\phi^2|\tilde f|&\leq \frac{1}{\delta q_0}\int_{B_2(x_0)}\left|c_0|\tilde f| \e^{\frac{n}{q_0}-1}|\nabla\tilde u|^{\frac{2}{q_0}-1}\right|^{q_0}+\frac{\delta(q_0-1)}{q_0}\int_{B_2(x_0)}\left|\phi^2\e^{1-\frac{n}{q_0}}|\nabla\tilde u|^{1-\frac{2}{q_0}}\right|^\frac{q_0}{q_0-1}\\
	&\leq \frac{c_0^{q_0}}{\delta q_0}\Lambda_0+\frac{\delta(q_0-1)}{q_0}\int_{B_2(x_0)}\phi^\frac{2q_0}{q_0-1}|\nabla\tilde u|^\frac{q_0-2}{q_0-1}\\
	&\leq\frac{c_0^{q_0}}{\delta q_0}\Lambda_0+\frac{C_n\delta(q_0-1)}{q_0}\left(\int_{B_2(x_0)}\phi^\frac{4q_0}{q_0-2}|\nabla\tilde u|^2\right)^\frac{q_0-2}{2(q_0-1)}\\
	&\leq \frac{4C_n(q_0-1)c_0^n}{q_0^2}\Lambda_0+\frac{1}{4}\max\left\{\left[\int_{B_2(x_0)}\phi^2|\nabla\tilde u|^2\right]^\frac{4q_0}{q_0-2},1\right\}.
	\end{align*}
Here we used \eqref{RescaledLqCondition} to bound $\int_{B_2(x_0)}\left|c_0\tilde f\e^{\frac{n}{q_0}-1}|\nabla\tilde u|^{\frac{2}{q_0}-1}\right|^{q_0}$ and the fact that $\e^{1-\frac{n}{q_0}}<1$ in the second inequality, H\"older's inequality with exponent $\frac{2(q_0-1)}{q_0-2}$ in the third inequality. And in the fourth inequality we used $\phi^\frac{4q_0}{q_0-2}\leq\phi^2$, and chose $\delta$ to be $\frac{q_0}{4C_n(q_0-1)}$.
We insert the above inequality into \eqref{eqn_L2Bound} and get
	\begin{align*}
	\int_{B_2(x_0)}|\nabla\tilde u|^2\phi^2&\leq \frac{1}{2}\int_{B_2(x_0)}|\nabla\tilde u|^2\phi^2+\int_{B_2(x_0)}2c_0^2|\nabla\phi|^2+\int_{B_2(x_0)}c_0\phi^2 C_{c_0}\\
	&+\frac{4C_n(q_0-1)c_0^n}{q_0^2}\Lambda_0+\frac{1}{4}\max\left\{\int_{B_2(x_0)}\phi^2|\nabla\tilde u|^2,1\right\}.
	\end{align*}
We assume $\int_{B_2(x_0)}\phi^2|\nabla\tilde u|^2\geq1$, otherwise the desired bound holds trivially.
Then by moving the first term $\frac{1}{2}\int_{B_2(x_0)}|\nabla\tilde u|^2\phi^2$ and the fifth term $\int_{B_2(x_0)}\phi^2|\nabla\tilde u|^2$ on the right to the left, we prove the claim.
\end{proof}
Now suppose $\|\nabla\tilde u\|_{L^{p_0}(B_1(x_0))}\leq C(c_0,\Lambda_0,q_0,n)$ (independent of $\e$) for some $p_0>1$ ($p_0$ can be chosen to be 2 by the claim above).
For any $B_2(x_0)\in B_{\frac{1}{\e}}(0)$, we have by H\"older's inequality
	\begin{align}\label{fLp}
	&\|\tilde f\|_{L^\frac{p_0q_0}{p_0+q_0-2}(B_1(x_0))}=\left(\int_{B_1(x_0)} |\tilde f|^\frac{p_0q_0}{p_0+q_0-2}\right)^\frac{p_0+q_0-2}{p_0q_0}\\\nonumber
&\leq\left[\left\|\left|\tilde f\e^{\frac{n-q_0}{q_0}}|\nabla\tilde u|^{\frac{2}{q_0}-1}\right|^\frac{p_0q_0}{p_0+q_0-2}\right\|_{L^\frac{p_0+q_0-2}{p_0}(B_1(x_0))}\cdot\left\|\left(\e^{\frac{q_0-n}{q_0}}|\nabla\tilde u|^{1-\frac{2}{q_0}}\right)^\frac{p_0q_0}{p_0+q_0-2}\right\|_{L^\frac{p_0+q_0-2}{q_0-2}(B_1(x_0))}\right]^\frac{p_0+q_0-2}{p_0q_0}\\\nonumber
&\leq\left[\Lambda_0^\frac{p_0}{p_0+q_0-2}\e^\frac{(q_0-n)p_0}{p_0+q_0-2}\cdot\left(\int_{B_1(x_0)}|\nabla \tilde u|^{p_0}\right)^\frac{q_0-2}{p_0+q_0-2}\right]^\frac{p_0+q_0-2}{p_0q_0}\\\nonumber
&=\Lambda_0^\frac{1}{q_0}\cdot\e^\frac{q_0-n}{q_0}\cdot\left(\int_{B_1(x_0)}|\nabla\tilde u|^{p_0}\right)^\frac{q_0-2}{p_0q_0}\\\nonumber
&\leq C(c_0,\Lambda_0,q_0,n)\e^\frac{q_0-n}{q_0}\leq C(c_0,\Lambda_0,q_0,n).
	\end{align}
{\begin{remark}
Here $q_0>n$ will make the scaling subcritical and ensures a uniform bound of $\|\tilde f\|_{L^\frac{p_0q_0}{p_0+q_0-2}(B_1(x_0))}$ independent of $\e$.
\end{remark}
}
Thus $\tilde f$ is uniformly bounded in $L^\frac{p_0q_0}{p_0+q_0-2}(B_1(x_0))$ independent of $\e$. By applying the Sobolev inequality to \eqref{RescaledEquation}, standard Calderon--Zygmund estimates and finally using the $L^\infty$ bound of $u$ in condition (2) of Theorem \ref{main}, we have
	\begin{align}\label{LpGradu}
	\|\nabla\tilde u\|_{L^{\frac{p_0q_0}{p_0+q_0-2-p_0\frac{q_0}{n+1}}}({B_1(x_0)})}&\leq\|\tilde u\|_{W^{1,\frac{p_0q_0}{p_0+q_0-2-p_0\frac{q_0}{n+1}}}({B_1(x_0)})}\\\nonumber
&\leq C\|\tilde u\|_{W^{2,\frac{p_0q_0}{p_0+q_0-2}}(B_1(x_0))}\\\nonumber
&\leq C\|\tilde f\|_{L^\frac{p_0q_0}{p_0+q_0-2}(B_1(x_0))}+C\|W'(\tilde u)\|_{L^\frac{p_0q_0}{p_0+q_0-2}(B_1(x_0))}\\\nonumber
&\leq C\Lambda_0^\frac{1}{q_0}\cdot\e^\frac{q_0-n}{q_0}\cdot\left(\int_{B_1(x_0)}|\nabla \tilde u|^{p_0}\right)^\frac{q_0-2}{p_0q_0}+C\|W'(\tilde u)\|_{L^\infty(B_1(x_0))}\\\nonumber
&\leq C(c_0,\Lambda_0,q_0,n)(\e^\frac{q_0-n}{q_0}+1)\leq \tilde C(c_0,\Lambda_0,q_0,n).
	\end{align}
We remark that $ q_0 > n$ ensures the coefficient $\e^\frac{q_0-n}{q_0}$ stays uniformly bounded as $ \e \rightarrow 0$.

In the case $\frac{p_0q_0}{p_0+q_0-2}>n+1$, by Calderon--Zygmund estimates we have
	\begin{align*}
	\|\tilde u\|_{W^{2,\frac{p_0q_0}{p_0+q_0-2}}(B_1(x_0))}\leq C(c_0,\Lambda_0,q_0,n)(\e^\frac{q_0-n}{q_0}+1)\leq \tilde C(c_0,\Lambda_0,q_0,n).
	\end{align*}
The Sobolev inequality then gives $\|\nabla \tilde u\|_{L^\infty}\leq C$.

In the case $\frac{p_0q_0}{p_0+q_0-2}\leq n+1$, using $q_0>n+1$, we have $p_0<p_0\frac{q_0}{n+1}$. Namely
	\begin{align}\label{IterationP0}
	\frac{q_0}{p_0+q_0-2-p_0\frac{q_0}{n+1}}p_0=\frac{q_0}{(p_0-p_0\frac{q_0}{n+1})+(q_0-2)}p_0\geq\frac{q_0}{q_0-2}p_0.
	\end{align}
So we improved $\nabla\tilde u$ from $L^{p_0}$ to $L^{\frac{q_0}{q_0-2}p_0}$.
Define $p_i=\frac{q_0}{q_0-2}p_{i-1}$. Using $q_0>n+1$, we iterate finitely many times until $p_i>\frac{(n+1)(q_0-2)}{q_0-(n+1)}$, i.e. $\frac{p_iq_0}{p_i+q_0-2}>n+1$. The Sobolev inequality gives $\nabla \tilde u\in L^\infty$.
So if $q_0>n+1$, we get $\nabla\tilde u\in L^\infty$. Rescaling back, we get \eqref{GradBound}.
By \eqref{RescaledLqCondition} where $(q_0> n+1\geq2)$ and $\nabla\tilde u\in L^\infty$, we have $\tilde f\in L^{q_0}$. Standard Calderon--Zygmund estimates give
	\begin{align*}
	\|\nabla\tilde u\|_{C^{0,1-\frac{n+1}{q_0}}(B_1(x_0))}\leq\|\tilde u\|_{W^{2,q_0}(B_1(x_0))}\leq \|\tilde f\|_{L^{q_0}(B_1(x_0))}+\|W'(\tilde u)\|_{L^{q_0}(B_1(x_0))}<\infty,
	\end{align*}
which gives \eqref{GradHolderBound}.

Consider now the case $n<q_0\leq n+1$. For any
	\begin{align}\label{ConditionToIteratePi}
	p_i\leq \frac{2(n+1)}{n+1-q_0}-\delta,
	\end{align} we have
	\begin{align*}
	p_i+q_0-2-p_i\frac{q_0}{n+1}&=p_i\frac{n+1-q_0}{n+1}+q_0-2\\
	&=\left(\frac{2(n+1)}{n+1-q_0}-\delta\right)\frac{n+1-q_0}{n+1}+q_0-2\\
	&=q_0-\frac{n+1-q_0}{n+1}\delta.
	\end{align*}
And thus
	\begin{align}
	\frac{q_0}{p_i+q_0-2-p_i\frac{q_0}{n+1}}p_i\geq\frac{q_0}{q_0-\frac{n+1-q_0}{n+1}\delta}p_i\geq p_i.
	\end{align}
So \eqref{LpGradu} increases the integrability of $\nabla\tilde u$ from $L^{p_i}$ to $L^{\frac{q_0}{q_0-\frac{n+1-q_0}{n+1}\delta}p_i}$.
And we can iterate until \eqref{ConditionToIteratePi} fails, namely
	\begin{align}\label{IntegralGradBound}
	\|\nabla \tilde u\|_{L^{\frac{2(n+1)}{n+1-q_0}-\delta}(B_1(x_0))}\leq C(c_0,\Lambda_0,q_0,n)\e^\frac{q_0-n}{q_0}\leq C(c_0,\Lambda_0,q_0,n),
	\end{align}
for any $x_0\in B_{\frac{1}{\e}-2}$(so that the condition in the claim above is satisfied).
			By Sobolev inequalities, we then have for any $x_0\in B_{\frac{1}{\e}-2}$
	\begin{align*}
	\|\tilde u\|_{C^{0,\frac{1}{2}}(B_1(x_0))}&\leq C\|\tilde u\|_{W^{1,2(n+1)}(B_1(x_0))}\\
	&\leq C\|\tilde u\|_{W^{1,\frac{2(n+1)}{n+1-q_0}-\delta}(B_1(x_0))}\\
	&\leq C(c_0,\Lambda,0,q_0,n)\e^\frac{q_0-n}{q_0}\leq C(c_0,\Lambda,0,q_0,n).
	\end{align*}
Rescaling back gives
	\begin{align*}
	\e^\frac{1}{2}\|u\|_{C^{0,\frac{1}{2}}(B_{1-\e})}\leq \|\tilde u\|_{C^{0,\frac{1}{2}}(B_{\frac{1}{\e}-1})}\leq C(c_0,\Lambda_0,q_0,n)\e^\frac{q_0-n}{q_0}\leq C(c_0,\Lambda_0,q_0,n),
	\end{align*}
which is \eqref{HolderBound}.
By \eqref{fLp} we improve the integrability of $\tilde f$ in \eqref{fLp} up to
	\begin{align*}
	\|\tilde f\|_{L^\frac{p_iq_0}{p_i+q_0-2}(B_1(x_0))}\leq C\e^\frac{q_0-n}{q_0},
	\end{align*}
for $p_i\leq\frac{2(n+1)}{n+1-q_0}-\delta$.
So if $q_0\in(n,n+1]$, by choosing $p_i=2(n+1)$, we have
	\begin{align}\label{fOptimalIntegralityPower}
	\frac{p_iq_0}{p_i+q_0-2}=\frac{p_i}{\frac{p_i-2}{q_0}+1}>\frac{p_i}{\frac{p_i-2}{n}+1}=\frac{2(n+1)}{\frac{2(n+1)-2}{n}+1}=\frac{2(n+1)}{3},
	\end{align}
rearranging gives $\frac{p_iq_0}{p_i+q_0-2}>\frac{2(n+1)}{3}\geq\frac{n+1}{2}+\delta_0$ for some $\delta_0>0$.
On the other hand, if $q_0>n+1$, using \eqref{RescaledLqCondition} and the uniform gradient bound of $u$ in Theorem \ref{GradientBound}, we have $\|\tilde f\|_{L^{q_0}(B_1(x_0))}\leq C\e^\frac{q_0-n}{q_0}$, where $q_0>n+1>\frac{n+1}{2}+\delta_0$.
Combining both cases, for any $q_0>n$
	\begin{align}\label{fLpBound2}
	\|\tilde f\|_{L^{\frac{n+1}{2}+\delta_0}(B_1(x_0))}\leq C\e^\frac{q_0-n}{q_0}.
	\end{align}
and
	\begin{align}\label{fLpBound3}
	\|\tilde f\|_{L^{\frac{n+1}{2}+\delta_0}(B_{\frac{1}{\e}-1}(x_0))}\leq C\e^\frac{q_0-n}{q_0}\e^{-n-1},
	\end{align}
Rescaling back gives the bound on $f$,
	\begin{align*}
	\| f\|_{L^{\frac{n+1}{2}+\delta_0}(B_{1-\e}(x_0))}\leq C\e^{-\frac{n}{q_0}}.
	\end{align*}
	\end{proof}
			Since in the case $q_0\in(n,n+1]$, we lack gradient bounds of $u$ as in the case $q_0>n+1$. In order to get better estimates of the discrepancy terms in the almost monotonicity formula, we use some ideas from \cite{roger2006modified}.
We will apply the following Lemma to \eqref{RescaledEquation} for $\e$ sufficiently small such that $C\e^\frac{q_0-n}{q_0}\leq\omega$. \begin{lemma}[cf {\cite[Lemma 3.2]{roger2006modified}}]\label{lem_L1discrepancydeltabound}
Let $ n+1\geq 3, 0<\delta\leq\delta_1$ and $R(\delta)= \frac{1}{\delta^{p_1}}, \omega(\delta) = \delta^{p_2}$, where $p_1=5, p_2=35$. If $\tilde u\in C^2(B_R), \tilde f\in C^0(B_R), B_R=B_R(0)\subset \mathbb R^{n+1}$ where
	\begin{align*}
	\begin{array}{cc}
-\Delta \tilde u +W'(\tilde u)=\tilde f&\text{ in $B_R$}, \\
	|\tilde u| \leq c_0&\text{ in $B_R$}, \\
	\|\tilde f\|_{L^{\frac{n+1}{2}+\delta_0}(B_R)}\leq \omega,
\end{array}
	\end{align*}
$c_0$ is as assumed in condition (2) of Theorem \ref{main} and $\delta_0$ is as in Theorem \ref{GradientBound}.
Then
	\begin{align}\label{eqn_L1discrpancydeltabound}
	\int_{B_1} \left(\frac{|\nabla \tilde u|^2}{2} - W(\tilde u)\right)_+ \leq C\delta.
	\end{align}
And for $ \tau = \delta^{p_3}$, where $p_3=\frac{2\delta_0}{(n+1)^2+(n+1)\delta_0+6\delta_0}$ , we get
	\begin{align}\label{eqn_L1discrepancytaubound}
	\int_{B_{\frac{1}{2}}}\left( \frac{|\nabla \tilde u|^2}{2} - W(\tilde u) \right)_+&\leq c\tau \int_{B_{\frac{1}{2}}}\left( \frac{|\nabla\tilde u|^2}{2} + W(\tilde u) \right) + \int_{B_{\frac{1}{2}}\cap \{|\tilde u| \geq 1- \tau\} }\frac{|\nabla \tilde u|^2}{2}.
	\end{align}
\end{lemma}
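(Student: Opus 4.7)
The plan is to adapt the argument of Röger--Schätzle \cite[Lemma 3.2]{roger2006modified} to our inhomogeneous setting, where the sharp improved integrability $\|\tilde f\|_{L^{(n+1)/2+\delta_0}(B_R)}\leq\omega$ on the large ball $B_R$, supplied by Theorem \ref{GradientBound}, plays the role of their $L^2$ control on the forcing term.

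First, I would establish a Modica-type differential inequality for the discrepancy $\xi := \frac{|\nabla\tilde u|^2}{2}-W(\tilde u)$. Using $-\Delta\tilde u+W'(\tilde u)=\tilde f$, a direct calculation yields
\[
\Delta\xi = |D^2\tilde u|^2 - W'(\tilde u)^2 + W'(\tilde u)\tilde f - \nabla\tilde f\cdot\nabla\tilde u.
\]
In the homogeneous case, the standard $P$-function argument gives $\xi\leq 0$ pointwise under the $L^\infty$ bound on $\tilde u$. Following Röger--Schätzle, I would carry over this argument to show that $\xi_+$ is a weak subsolution of a linear elliptic equation whose inhomogeneity is controlled by $|\tilde f|^2+|\tilde f||\nabla\tilde u|+W'(\tilde u)\tilde f$.

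Second, I would multiply this subsolution inequality by a cutoff $\zeta\in C_c^\infty(B_R)$ with $\zeta\equiv 1$ on $B_1$ and integrate by parts. Combining the $C^{0,1/2}$ bound of $\tilde u$ (or the full gradient bound when $q_0>n+1$) from Theorem \ref{GradientBound}, Hölder's inequality applied to the $\tilde f$-terms with the exponent $\frac{n+1}{2}+\delta_0$, and the Sobolev embedding on $B_R$, one arrives at an estimate of the form
\[
\int_{B_1}\xi_+ \leq C\,\omega^{\alpha}\,R^{\beta(n,\delta_0)} \leq C\,\delta^{\alpha p_2 - \beta p_1}.
\]
The specific exponents $p_1=5$ and $p_2=35$ are then chosen so that $\alpha p_2-\beta p_1\geq 1$, which produces \eqref{eqn_L1discrpancydeltabound}. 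For \eqref{eqn_L1discrepancytaubound}, I would partition $B_{1/2}=\{|\tilde u|\geq 1-\tau\}\cup\{|\tilde u|\leq 1-\tau\}$. On the wells the trivial bound $\xi_+\leq\tfrac{|\nabla\tilde u|^2}{2}$ gives the second right-hand term directly. On the transition region, the lower bound $W(\tilde u)\geq c\tau^2$ combined with \eqref{eqn_L1discrpancydeltabound} yields
\[
\int_{B_{1/2}\cap\{|\tilde u|\leq 1-\tau\}}\xi_+ \leq C\delta \leq c\tau\int_{B_{1/2}}\Big(\tfrac{|\nabla\tilde u|^2}{2}+W(\tilde u)\Big),
\]
and the explicit value $p_3=\tfrac{2\delta_0}{(n+1)^2+(n+1)\delta_0+6\delta_0}$ drops out of balancing the $\tau^{-2}$ weight against the fractional gain from the Sobolev exponent and the integrability gap $\delta_0$.

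The main obstacle will be the exponent bookkeeping in the second step: precisely because the cutoff must be spread over the large ball $B_R=B_{\delta^{-p_1}}$, the Sobolev constants and boundary-style error terms scale as positive powers of $R$, so any loss here must be strictly beaten by the smallness $\omega=\delta^{p_2}$ of the forcing. Verifying that the Modica subsolution inequality is robust enough to survive localisation on $B_R$, and that the dimensional exponents in the Sobolev/Hölder chain close to give a net gain of at least one power of $\delta$, is the delicate core of the argument and is what forces the particular numerical choices of $p_1,p_2,p_3$.
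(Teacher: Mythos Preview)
Your proposal diverges from the paper's argument in a way that leaves a genuine gap. The paper does \emph{not} proceed by an energy/integration-by-parts method on $\xi_+$. Instead it runs a pointwise maximum-principle argument on a carefully modified discrepancy, and this modification is the real content of the proof. Concretely, the paper first solves $\Delta\psi=-\tilde f$ on $B_R$ and sets $\tilde u_0:=\tilde u+\psi$, so that $\Delta\tilde u_0=W'(\tilde u)$ absorbs the inhomogeneity up to a small error $\tilde f_0$ with $\|\tilde f_0\|_{L^\infty}+\|\nabla\tilde f_0\|_{L^p}$ controlled by powers of $\delta$. It then introduces a nonlinear corrector $G_\delta$ with $G_\delta'\geq C\delta^3$ and $G_\delta'W'-2G_\delta''(W+G_\delta)\geq\delta G_\delta'$, together with an auxiliary $\varphi$ solving $-\Delta\varphi=|\langle\nabla\tilde u_0,\nabla\tilde f_0\rangle-(W'+G_\delta')\tilde f_0|$, and shows that $\xi_G:=\tfrac{|\nabla\tilde u_0|^2}{2}-W(\tilde u_0)-G_\delta(\tilde u_0)-\varphi$ satisfies a strict differential inequality $\Delta\xi_G\geq -C(1+\delta/|\nabla\tilde u_0|)(|\nabla\xi_G|+\cdots)+C\delta^4$ on $\{\xi_G>0\}$. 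A localized maximum principle with a cutoff $\lambda$ on $B_{R/2}$ then forces $\sup_{B_1}\xi_G\leq C\delta$, from which \eqref{eqn_L1discrpancydeltabound} follows.

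Your route has two concrete obstructions. First, the Modica identity you wrote contains $\nabla\tilde f\cdot\nabla\tilde u$, and you only have $\tilde f\in L^{(n+1)/2+\delta_0}$ with no gradient control; this is precisely why the paper introduces $\psi$ and $\varphi$. Second, the assertion that ``$\xi_+$ is a weak subsolution of a linear elliptic equation'' is not correct as stated: the Modica inequality carries a drift $\tfrac{2W'(\tilde u)}{|\nabla\tilde u|^2}\nabla\tilde u\cdot\nabla\xi$ which is singular where $|\nabla\tilde u|$ is small, so testing against a cutoff and integrating by parts does not close without a device like $G_\delta$ to produce a strictly positive zeroth-order term. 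For \eqref{eqn_L1discrepancytaubound} your splitting idea is in the right spirit, but the inequality $C\delta\leq c\tau\int_{B_{1/2}}(\tfrac{|\nabla\tilde u|^2}{2}+W(\tilde u))$ requires a \emph{lower} bound on the energy; the paper obtains this from the $C^{0,2\delta_0/(n+1+\delta_0)}$ H\"older bound on $\tilde u$, which guarantees a ball of radius $\sim\tau^{(n+1+\delta_0)/(2\delta_0)}$ on which $W(\tilde u)\geq\tau^2/4$, and this is exactly what fixes $p_3$.
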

\begin{proof}
Let us consider the auxiliary function $\psi$ which solves the Dirichlet problem
	\begin{align}\label{Auxiliary}
	\Delta\psi&=-\tilde f, \quad \text{ in $B_R$}\\\nonumber
\psi&=0, \quad \text{ on $\partial B_R$}.
	\end{align}
The auxiliary function will allows us to control the inhomogeneous part of the equation.
\begin{claim} The function $\psi$ defined in \eqref{Auxiliary} satisfies the bounds
	\begin{align}\label{Suppsi}
	\|\psi\|_{L^\infty(B_R)}&\leq C\delta^{25+5\frac{n+1}{\frac{n+1}{2}+\delta_0}}\ll 1,
	\end{align}
	\begin{align}\label{Gradpsi}
	\|\nabla\psi\|_{L^\frac{(n+1)(n+1+2\delta_0)}{n+1-2\delta_0}(B_R)}&\leq C\omega=C\delta^{35}.
	\end{align}
\end{claim}
\begin{proof}
Rescaling by $\frac{1}{R}$, we have
	\begin{align}\label{AuxiliaryR}
	\begin{split}
	\Delta\psi_R&=\tilde f_R, \quad \text{ in $B_1$}\\
	\psi_R&=0, \quad \text{ on $\partial B_1$},
	\end{split}
	\end{align}
where $\psi_R(x)=\psi(Rx), \tilde f_R(x)=R^2\tilde f(Rx)$. Standard Calderon--Zygmund estimates give
	\begin{align*}
	\|\psi_R\|_{W^{2,\frac{n+1}{2}+\delta_0}(B_1)}\leq\|\tilde f_R\|_{L^{\frac{n+1}{2}+\delta_0}(B_1)}=R^{2-\frac{n+1}{\frac{n+1}{2}+\delta_0}}\|\tilde f\|_{L^{\frac{n+1}{2}+\delta_0}(B_R)}\leq CR^{2-\frac{n+1}{\frac{n+1}{2}+\delta_0}}\omega,
	\end{align*}
where $2-\frac{n+1}{\frac{n+1}{2}+\delta_0}>0$. Rescaling back yields
	\begin{align*}
	&\|\psi\|_{L^{\frac{n+1}{2}+\delta_0}(B_R)}+R\|\nabla\psi\|_{L^{\frac{n+1}{2}+\delta_0}(B_R)}+R^2\|\nabla^2\psi\|_{L^{\frac{n+1}{2}+\delta_0}(B_R)}\\
	&=R^\frac{n+1}{\frac{n+1}{2}+\delta_0}\|\psi_R\|_{L^{\frac{n+1}{2}+\delta_0}(B_1)}+R^\frac{n+1}{\frac{n+1}{2}+\delta_0}\|\nabla\psi_R\|_{L^{\frac{n+1}{2}+\delta_0}(B_1)}+R^\frac{n+1}{\frac{n+1}{2}+\delta_0}\|\nabla^2\psi_R\|_{L^{\frac{n+1}{2}+\delta_0}(B_1)}\\
	&= R^\frac{n+1}{\frac{n+1}{2}+\delta_0}\|\psi_R\|_{W^{2,\frac{n+1}{2}+\delta_0}(B_1)}\\
	&\leq CR^\frac{n+1}{\frac{n+1}{2}+\delta_0}R^{2-\frac{n+1}{\frac{n+1}{2}+\delta_0}}\omega\\
	&=CR^2\omega\\
	&= C\delta^{25}.
	\end{align*}
\emph{Here we prove \eqref{Suppsi}:} by the Sobolev inequality since $\delta_0>0 \implies \frac{n+1}{2} + \delta_0 >\frac{n+1}{2}$, we have
	\begin{align*}
	\|\psi\|_{L^\infty(B_R)}=\|\psi_R\|_{L^\infty(B_1)}&\leq C\|\psi_R\|_{W^{2,\frac{n+1}{2}+\delta_0}(B_1)}\\\nonumber
&\leq CR^{2-\frac{n+1}{\frac{n+1}{2}+\delta_0}}\omega\\\nonumber
&=C\delta^{25+5\frac{n+1}{\frac{n+1}{2}+\delta_0}}\ll 1,
	\end{align*}
due to the choice of $\omega$, where we used $\frac{(n+1)(n+1+2\delta_0)}{n+1-2\delta_0}>n+1$.
\emph{Here we prove the gradient bound \eqref{Gradpsi}:}
	\begin{align*}
	\|\nabla\psi\|_{L^\frac{(n+1)(n+1+2\delta_0)}{n+1-2\delta_0}(B_R)}&\leq R^{\frac{n+1-2\delta_0}{n+1+2\delta_0}-1}\|\nabla\psi_R\|_{L^\frac{(n+1)(n+1+2\delta_0)}{n+1-2\delta_0}(B_1)}\\\nonumber
&\leq CR^{\frac{n+1-2\delta_0}{n+1+2\delta_0}-1}\|\psi_R\|_{W^{2,\frac{n+1}{2}+\delta_0}(B_1)}\\\nonumber
&\leq CR^{\frac{n+1-2\delta_0}{n+1+2\delta_0}-1}R^{2-\frac{n+1}{\frac{n+1}{2}+\delta_0}}\omega\\\nonumber
&=CR^0\omega\\
	&=C\omega=C\delta^{35}.
	\end{align*}
\end{proof}
We define $\tilde u_0:=\tilde u+\psi\in W^{2,\frac{n+1}{2}+\delta_0}(B_R)$. By \eqref{Auxiliary}, \eqref{Suppsi}, $\tilde u_0$ satisfies
	\begin{align}\label{u0}
	|\tilde u_0|&\leq c_0+1,\\\nonumber
\Delta\tilde u_0&=W'(\tilde u).
	\end{align}
We compute for any $\beta>0$,
	\begin{align*}
	\frac{|\nabla\tilde u|^2}{2}-W(\tilde u)&=\frac{|\nabla\tilde u_0-\nabla\psi|^2}{2}-W(\tilde u_0-\psi)\\
	&\leq\left(\frac{1}{2}+\beta\right)|\nabla\tilde u_0|^2+\left(\frac{1}{2}+\frac{1}{\beta}\right)|\nabla\tilde \psi|^2-W(\tilde u_0)+C|\psi|,
	\end{align*}
for some $C>0$. Thus by \eqref{Suppsi} and \eqref{Gradpsi}, we have
	\begin{align*}
	&\int_{B_1}\left(\frac{|\nabla\tilde u|^2}{2}-W(\tilde u)\right)_+\\
	&\leq \int_{B_1}\left(\frac{|\nabla\tilde u_0|^2}{2}-W(\tilde u_0)\right)_+ + \int_{B_1}\left(\beta|\nabla\tilde u_0|^2+C|\psi|+\left(\frac{1}{2}+\frac{1}{\beta}\right)|\nabla \psi|^2\right)\\
	&\leq \int_{B_1}\left(\frac{|\nabla\tilde u_0|^2}{2}-W(\tilde u_0)\right)_++C\left(\beta+R^{2-\frac{n+1}{\frac{n+1}{2}+\delta_0}}\omega+\left(\frac{1}{2}+\frac{1}{\beta}\right)\omega^2\right).
	\end{align*}
By choosing $\beta=\omega\leq\delta^{p_2}$ and using our hypothesis on $\omega : R^{2-\frac{n+1}{\frac{n+1}{2}+\delta_0}}\omega=\delta^{25+\frac{5(n+1)}{\frac{n+1}{2}+\delta_0}}$.
By our choice of $p_1=2,p_2=15$, we ensure
	\begin{align*}
	\beta&=\delta^{35}\leq C\delta,\\
	R^{2-\frac{n+1}{\frac{n+1}{2}+\delta_0}}\omega&=\delta^{25+5\frac{n+1}{\frac{n+1}{2}+\delta_0}}\leq C\delta,\\
	\left(\frac{1}{2}+\frac{1}{\beta}\right)\omega^2&=\frac{1}{2}\delta^{70}+\delta^{35}\leq C\delta,
	\end{align*}
for $n\geq2$. Thus
	\begin{align*}
	\int_{B_1}\left(\frac{|\nabla\tilde u|^2}{2}-W(\tilde u)\right)_+\leq\int_{B_1}\left(\frac{|\nabla\tilde u_0|^2}{2}-W(\tilde u_0)\right)_+ +C\delta.
	\end{align*}
To prove \eqref{eqn_L1discrpancydeltabound}, it suffices to show
	\begin{align}\label{eqn_u0L1discrepancydeltabound}
	\int_{B_1}\left(\frac{|\nabla\tilde u_0|^2}{2}-W(\tilde u_0)\right)_+\leq C\delta.
	\end{align}
Here we estimate $\tilde u$. Define $\tilde u_R(x)=\tilde u(Rx)$ then by the Calderon--Zygmund estimates we have
	\begin{align}\label{w2pu}
	\|\tilde u_R\|_{W^{2,{\frac{n+1}{2}+\delta_0}}(B_\frac{1}{2})}&\leq C\|\Delta\tilde u_R\|_{L^{\frac{n+1}{2}+\delta_0}(B_1)}+C\|\tilde u_R\|_{L^{\frac{n+1}{2}+\delta_0}(B_1)}\\\nonumber
&\leq C\left(R^{2-\frac{n+1}{\frac{n+1}{2}+\delta_0}}\|\Delta\tilde u\|_{L^{\frac{n+1}{2}+\delta_0}(B_R)}+1\right)\\\nonumber
&\leq C\left(R^{2-\frac{n+1}{\frac{n+1}{2}+\delta_0}}\left(\|W'(\tilde u)\|_{L^{\frac{n+1}{2}+\delta_0}(B_R)}+\|\tilde f\|_{L^{\frac{n+1}{2}+\delta_0}(B_R)}\right)+1\right)\\\nonumber
&\leq C\left(R^{2-\frac{n+1}{\frac{n+1}{2}+\delta_0}}(R^\frac{n+1}{\frac{n+1}{2}+\delta_0}+\omega)+1\right)\\\nonumber
&\leq CR^2.
	\end{align}
By the Sobolev embedding
	\begin{align}\label{Gradu}
	\|\nabla\tilde u\|_{L^\frac{(n+1)(n+1+2\delta_0)}{n+1-2\delta_0}(B_\frac{R}{2})}&\leq R^{\frac{n+1-2\delta}{n+1+2\delta}-1}\|\nabla\tilde u_R\|_{L^\frac{(n+1)(n+1+2\delta_0)}{n+1-2\delta_0}(B_\frac{1}{2})}\\\nonumber
&\leq R^{\frac{n+1-2\delta_0}{n+1+2\delta_0}-1}\|\tilde u_R\|_{W^{2,{\frac{n+1}{2}+\delta_0}}(B_\frac{1}{2})}\\\nonumber
&\leq R^{\frac{n+1-2\delta_0}{n+1+2\delta_0}-1}\cdot CR^2=C R^{\frac{n+1-2\delta}{n+1+2\delta}+1}.
	\end{align}
We define
	\begin{align*}
	\tilde f_0&:=-\Delta\tilde u_0+W'(\tilde u_0)\\
	&=-\Delta\psi-\Delta\tilde u+W'(\tilde u)+W'^\prime(\tilde u)\psi+\frac{1}{2}W^{(3)}(\tilde u)\psi^2+\frac{1}{6}W^{(4)}(\tilde u)\psi^3\\
	&=W'^\prime(\tilde u)\psi+\frac{1}{2}W^{(3)}(\tilde u)\psi^2+\frac{1}{6}W^{(4)}(\tilde u)\psi^3,
	\end{align*}
since the derivatives of order $5$ or higher of the potential $W(u)=\frac{(1-u^2)^2}{2}$ vanish. By \eqref{Suppsi}, \eqref{Gradpsi} and \eqref{Gradu}, we have
	\begin{align}\label{Supf0}
	\|\tilde f_0\|_{L^\infty(B_R)}\leq C\|\tilde \psi\|^3_{L^\infty(B_R)}\leq C\left(R^{2-\frac{n+1}{\frac{n+1}{2}+\delta_0}}\omega\right)^3\leq C\delta^{75+15\frac{n+1}{\frac{n+1}{2}+\delta_0}}\ll 1,
	\end{align}
and
	\begin{align}\label{Gradf0}
	\|\nabla\tilde f_0\|_{L^\frac{(n+1)(n+1+2\delta_0)}{n+1-2\delta_0}(B_R)}&\leq C\left(\|\nabla\tilde u\|_{L^\frac{(n+1)(n+1+2\delta_0)}{n+1-2\delta_0}(B_R)}\cdot\|\psi\|_{L^\infty(B_R)}+\|\nabla\psi\|_{L^\frac{(n+1)(n+1+2\delta_0)}{n+1-2\delta_0}(B_R)}\right)\\\nonumber
&\leq C\left(R^{\frac{n+1-2\delta_0}{n+1+2\delta_0}+1}\cdot R^{2-\frac{n+1}{\frac{n+1}{2}+\delta_0}}\omega+\omega\right)\\\nonumber
&=C(R^2\omega+\omega)\\\nonumber
	&\leq CR^2\omega\\\nonumber
	&=C\delta^{25}\ll1.
	\end{align}
Since we have $|\tilde u_0| \leq c_0$, we apply Calderon--Zygmund to \eqref{u0}, for any $B_1(x)\subset B_R$ and $1<r<\infty$ and we get
	\begin{align}\label{u0Bounds}
	\|\tilde u_0\|_{W^{2,r}(B_\frac{1}{2}(x))}\leq C_r.
	\end{align}
	
Hence by the Morrey embedding
	\begin{align*}
	\|\nabla\tilde u_0\|_{L^\infty(B_{R-1})}\leq C.
	\end{align*}
We define a modified discrepancy
	\begin{align}\label{GDiscrepancy}
	\xi_G:=\frac{|\nabla\tilde u_0|^2}{2}-W(\tilde u_0)-G(\tilde u_0)-\varphi,
	\end{align}
for some function $G\in C^\infty(\mathbb R)$ and $\varphi\in W^{2,2}(B_R)$ that we choose as in the following claims
\begin{claim}
If we make the following choice of $G$,
	\begin{align} \label{eqn_Gdelta}
	G_\delta(r):=\delta\left(1+\int_{-c_0-1}^r\exp\left(-\int_{-c_0-1}^t\frac{|W'(s)|+\delta}{2(W(s)+\delta)}ds\right)dt\right)
	\end{align}
then we have the properties
	\begin{align}\label{eqn_Gdeltabounds}
	\begin{split}
\delta&\leq G_\delta(\tilde u_0)\leq C\delta,\\
	0&<G_\delta'(\tilde u_0)\leq\delta,\\
	0&<-G_{\delta}''(\tilde u_0)=G_\delta'(\tilde u_0)\frac{|W'(\tilde u_0)|+\delta}{2(W(\tilde u_0)+\delta)}\leq C.
\end{split}
	\end{align}
Furthermore we have
	\begin{align}\label{eqn_Gdeltadiffinequality}
	G_\delta' W'-2G''_\delta(W+G_\delta)&\geq\delta G'_\delta
	\end{align}
and
	\begin{align} \label{eqn_Gdashdelta}
	G_\delta'(\tilde u_0)&\geq C\delta^3.
	\end{align}
\end{claim}
\begin{proof}[Proof of Claim]
The first three equations of \eqref{eqn_Gdeltabounds} follow from the direct computations.
	For \eqref{eqn_Gdeltadiffinequality}, since $G_\delta\geq\delta$, we obtain
	\begin{align*}
	G_\delta' W'-2G''_\delta(W+G_\delta)&=G_\delta'\left(W'+\frac{|W'|+\delta}{(W+\delta)}(W+G_\delta)\right)\\
	&\geq G_\delta'\left(W'+\frac{|W'|+\delta}{(W+\delta)}(W+\delta)\right) \\
	&= G_\delta'\left(W'+|W'| + \delta \right)\\
	&\geq\delta G'_\delta.
	\end{align*}
For \eqref{eqn_Gdashdelta}, from the definition of $G_\delta$ \eqref{eqn_Gdelta} and the bound $|\tilde u_0|\leq c_0+1$, we compute
	\begin{align*}
	\begin{split}
G_\delta'(\tilde u_0)&\geq\delta\exp\left(-\int_{-c_0-1}^{c_0+1}\frac{|W'(s)|+\delta}{2(W(s)+\delta)}ds\right)\\
	&\geq\delta\exp\left(-\int_{-c_0-1}^{-1}\left|\frac{d}{ds}\log(W(s)+\delta)\right|ds-\int_{-1}^0\left|\frac{d}{ds}\log(W(s)+\delta)\right|ds-(c_0+1)\right)\\
	&\geq\delta\exp\left(-\left(\log(W(-c_0-1)+\delta)-\log\delta\right)-\left(\log(1+\delta)-\log\delta\right)-(c_0+1)\right)\\
	&\geq\delta\exp\left(\tilde C-\log(\delta^2)\right)\\
	&\geq C\delta^3,
\end{split}
	\end{align*}
where we used $W$ is an even function, increasing in $[-1,0]$ and decreasing in $[-c_0-1,-1]$.
\end{proof}
\begin{claim}
If we choose $\varphi$ to satisfy the Dirichlet problem
	\begin{align}\label{eqn_varphi}
	\begin{split}
-\Delta\varphi&=|\langle\nabla\tilde u_0,\nabla\tilde f_0\rangle-(W'+G_\delta')\tilde f_0|>0 \quad \text{ in $B_\frac{R}{2}$,}\\
	\varphi&=0 \quad \text{ on $\partial B_\frac{R}{2}$}
\end{split}
	\end{align}
then we have
	\begin{align}\label{eqn_varphipositive}
	\varphi\geq0 \quad \text{ in $B_\frac{R}{2}$}
	\end{align}
and
	\begin{align}\label{Gradphi}
	\|\varphi\|_{W^{1,\infty}(B_\frac{R}{2})}
&\leq CR^{4-\frac{n+1-2\delta_0}{n+1+2\delta_0}}\omega=C\delta^{15+5\frac{n+1-2\delta_0}{n+1+2\delta_0}}.
	\end{align}
\end{claim}
\begin{proof}
Since we have $\varphi\geq0$ in $\partial B_\frac{R}{2}$ by applying the maximum principle, we have $\varphi\geq0$ in $B_\frac{R}{2}$ which gives us \eqref{eqn_varphipositive}. The estimates \eqref{u0Bounds}, \eqref{Supf0} and \eqref{Gradf0} bound the right hand side of \eqref{eqn_varphi}, that is
	\begin{align*}
	\|\Delta\varphi\|_{L^\frac{(n+1)(n+1+2\delta_0)}{n+1-2\delta_0}(B_\frac{R}{2})}&=|\langle\nabla\tilde u_0,\nabla\tilde f_0\rangle-(W'+G_\delta')\tilde f_0|_{L^\frac{(n+1)(n+1+2\delta_0)}{n+1-2\delta_0}(B_\frac{R}{2})}\leq CR^2\omega=C\delta^{25}.
	\end{align*}
Denote by $\varphi_R(x)=\varphi(\frac{Rx}{2})$, then the Calderon--Zygmund estimates give
	\begin{align*}
	\|\varphi\|_{W^{1,\infty}(B_\frac{R}{2})}=\|\varphi_R\|_{W^{1,\infty}(B_1)}&\leq C\|\varphi_R\|_{W^{2,\frac{(n+1)(n+1+2\delta_0)}{n+1-2\delta_0}}(B_1)}\\
	&\leq C\|\Delta\varphi_R\|_{L^\frac{(n+1)(n+1+2\delta_0)}{n+1-2\delta_0}(B_1)}\\
	&\leq CR^{2-\frac{n+1-2\delta_0}{n+1+2\delta_0}}\|\Delta\varphi\|_{L^\frac{(n+1)(n+1+2\delta_0)}{n+1-2\delta_0}(B_\frac{R}{2})}\\
	&\leq CR^{4-\frac{n+1-2\delta_0}{n+1+2\delta_0}}\omega\\
	&=C\delta^{15+5\frac{n+1-2\delta_0}{n+1+2\delta_0}}
	\end{align*}
and hence we obtain \eqref{Gradphi}.
\end{proof}
We choose $\varphi$ according to \eqref{eqn_varphi}. Notice if $\xi_G> 0$, then we have $\nabla\tilde u_0\neq0$ and
	\begin{align}\label{GradLowerBoundu0}
	W(\tilde u_0)\leq\frac{1}{2}|\nabla\tilde u_0|^2.
	\end{align}
The case $\xi_G\leq0$ immediately gives us our desired estimate since we are seeking an upper bound.
\begin{claim}
For the choice of $G$ as in \eqref{eqn_Gdelta}
and $\varphi$ as in \eqref{eqn_varphi} we have the differential inequality
	\begin{align}\label{LaplacianXiLowerBound}
	\Delta\xi_G&\geq-C\left(1+\frac{\delta}{|\nabla\tilde u_0|}\right)(|\nabla\xi_G|+R^{4-\frac{n+1-2\delta_0}{n+1+2\delta_0}}\omega)+C(\delta^6+\delta^4)
	\end{align}
in $B_\frac{R}{2}\cap\{\xi_G>0\}\cap\{\nabla\tilde u_0\neq0\}$.
\end{claim}
\begin{proof}
We compute the Laplacian of the modified discrepancy
	\begin{align}\label{EquationGDiscrepancy}
	\begin{split}
	 \Delta\xi_G&=|\nabla^2\tilde u_0|^2+\langle\nabla\tilde u_0,\nabla\Delta\tilde u_0\rangle-\Delta\varphi-(W'+G')\Delta\tilde u_0-(W'^\prime+G'^\prime)|\nabla\tilde u_0|^2\\
	&=|\nabla^2\tilde u_0|^2+\langle\nabla\tilde u_0,W'^\prime\nabla\tilde u_0-\nabla\tilde f_0\rangle-\Delta\varphi-(W'+G')(W'(\tilde u_0)-\tilde f_0)-(W'^\prime+G'^\prime)|\nabla\tilde u_0|^2\\
	&=|\nabla^2\tilde u_0|^2-\langle\nabla\tilde u_0,\nabla\tilde f_0\rangle-\Delta\varphi-(W'+G')(W'(\tilde u_0)-\tilde f_0)-G'^\prime|\nabla\tilde u_0|^2.
	\end{split}
	\end{align}
By differentiating \eqref{GDiscrepancy}, we have
	\begin{align*}
	 \nabla\xi_G=\nabla^2\tilde u_0\nabla\tilde u_0-(W'+G')\nabla\tilde u_0-\nabla\varphi,
	\end{align*}
and thus
	\begin{align*}
	|\nabla^2\tilde u_0|^2|\nabla\tilde u_0|^2&\geq |\nabla^2\tilde u_0\nabla\tilde u_0|^2\\
	&\geq |\nabla \xi_G+(W'+G')\nabla\tilde u_0+\nabla\varphi|^2\\
	&\geq 2(W'+G')\left\langle\nabla\tilde u_0,\nabla(\xi_G+\varphi)\right\rangle+(W'+G')^2|\nabla\tilde u_0|^2.
	\end{align*}
Dividing by $|\nabla\tilde u_0|^2$, the first term in \eqref{EquationGDiscrepancy}, $|\nabla^2\tilde u_0|^2$ , is bounded as follows
	\begin{align*}
	|\nabla^2\tilde u_0|^2\geq\frac{2(W'+G')}{|\nabla\tilde u_0|^2}\langle\nabla\tilde u_0,\nabla(\xi_G+\varphi)\rangle+(W'+G')^2.
	\end{align*}
The last term in \eqref{EquationGDiscrepancy} is
	\begin{align*}
	|\nabla\tilde u_0|^2=2(\xi_G+W+G+\varphi).
	\end{align*}
Substituting these into \eqref{EquationGDiscrepancy} and rearranging, we have in $B_R\subset\{\nabla\tilde u_0=0\}$
	\begin{align*}
	&\Delta\xi_G-\frac{2(W'+G')}{|\nabla\tilde u_0|^2}\langle\nabla\tilde u_0,\nabla\xi_G\rangle+2G'^\prime\xi_G\\
	&\geq(W'+G')^2-W'(W'+G')-2G'^\prime(W+G)+\frac{2(W'+G')}{|\nabla\tilde u_0|^2}\langle\nabla\tilde u_0,\nabla\varphi\rangle\\
	&-2G'^\prime\varphi-\Delta\varphi-\langle\nabla\tilde u_0,\nabla\tilde f_0\rangle+(W'+G')\tilde f_0\\
	&=(G')^2+\left(G'W'-2G'^\prime(W+G)\right)+\frac{2(W'+G')}{|\nabla\tilde u_0|^2}\langle\nabla\tilde u_0,\nabla\varphi\rangle-2G'^\prime\varphi-\Delta\varphi\\
	&-\langle\nabla\tilde u_0,\nabla\tilde f_0\rangle+(W'+G')\tilde f_0.
	\end{align*}
We choose $G$ to be \eqref{eqn_Gdelta} which allows us to apply the estimates \eqref{eqn_Gdeltabounds} and \eqref{eqn_Gdeltadiffinequality} so that $\xi_G$ satisfies
	\begin{align}\label{EquationXiG}
	\Delta\xi_G&\geq \frac{2(W'+G')}{|\nabla\tilde u_0|^2}\left\langle\nabla\tilde u_0,(\nabla\xi_G+\nabla\varphi)\right\rangle-2G_{\delta}''\xi_G\\\nonumber
&+(G_\delta')^2+\delta G_\delta'-2G_\delta''\varphi-\Delta\varphi-\langle\nabla\tilde u_0,\nabla\tilde f_0\rangle+(W'+G_\delta')\tilde f_0,
	\end{align}
in $B_R\cap\{\nabla\tilde u_0\neq0\}$.
Furthermore we have by \eqref{GradLowerBoundu0}
	\begin{align*}
	|W'(\tilde u_0)|^2 =|\tilde u_0|^2(1-|\tilde u_0|^2)^2 \leq C W(\tilde u_0) \leq C |\nabla\tilde u_0|^2.
	\end{align*}
From \eqref{eqn_Gdeltabounds}, the bounds on $G_\delta$ and its derivatives, we get
	\begin{align}\label{W'+G'}
	\frac{|(W'+G_\delta')(\tilde u_0)\nabla\tilde u_0|}{|\nabla\tilde u_0|^2}\leq\frac{\frac{1}{2}|\nabla\tilde u_0|^3+\delta|\nabla\tilde u_0|}{|\nabla\tilde u_0|^2}\leq C\left(1+\frac{\delta}{|\nabla\tilde u_0|}\right).
	\end{align}
Substituting in \eqref{eqn_varphi}, \eqref{Gradphi}, and \eqref{W'+G'} into \eqref{EquationXiG} and using the fact that $G''<0$, we have
	\begin{align*}
	\Delta\xi_G&\geq-C\left(1+\frac{\delta}{|\nabla\tilde u_0|}\right)(|\nabla\xi_G|+|\nabla\varphi|)+(G_\delta')^2+\delta G_\delta'-\Delta\varphi+\Delta\varphi\\
	&\geq-C\left(1+\frac{\delta}{|\nabla\tilde u_0|}\right)\left(|\nabla\xi_G|+R^{4-\frac{n+1-2\delta_0}{n+1+2\delta_0}}\omega\right)+(G_\delta')^2+\delta G_\delta'.
	\end{align*}
 Thus applying equation \eqref{eqn_Gdashdelta} in $B_\frac{R}{2}\cap\{\xi_G>0\}\cap\{\nabla\tilde u_0\neq0\}$, we have \eqref{LaplacianXiLowerBound}
	\begin{align}
	\Delta\xi_G&\geq-C\left(1+\frac{\delta}{|\nabla\tilde u_0|}\right)(|\nabla\xi_G|+R^{4-\frac{n+1-2\delta_0}{n+1+2\delta_0}}\omega)+C(\delta^6+\delta^4).
	\end{align}
\end{proof}
We define
	\begin{align}\label{DefinitionEta}
	\eta:=\sup_{B_1}\xi_G
	\end{align}
and consider two cases : \newline
\textbf{case i) } $\eta:=\sup_{B_1}\xi_G< \delta$.
Since
	\begin{align*}
	\xi_G:=\frac{|\nabla\tilde u_0|^2}{2}-W(\tilde u_0)-G(\tilde u_0)-\varphi<\delta,
	\end{align*}
by \eqref{eqn_Gdeltabounds} and \eqref{Gradphi} this implies
	\begin{align*}
	\frac{|\nabla\tilde u_0|^2}{2}-W(\tilde u_0)\leq \delta +G(\tilde u_0)+\varphi \leq \delta + C\delta+ CR^{4-\frac{n+1-2\delta_0}{n+1+2\delta_0}}\omega.
	\end{align*}
Our choices give $ CR^{4-\frac{n+1-2\delta_0}{n+1+2\delta_0}}\omega=C\delta^{15+5\frac{n+1-2\delta_0}{n+1+2\delta_0}} \leq C\delta$ so
	\begin{align*}
	\frac{|\nabla\tilde u_0|^2}{2}-W(\tilde u_0)\leq C\delta
	\end{align*}
which, after integrating proves \eqref{eqn_u0L1discrepancydeltabound}.
\ \newline
\textbf{case ii) } $\eta:=\sup_{B_1}\xi_G\geq \delta > 0$.
We choose a cutoff function $\lambda\in C^2_0(B_\frac{R}{2})$ satisfying $0\leq\lambda\leq1$, $\lambda\equiv1$ on $B_\frac{R}{4}$ and $|\nabla^j\lambda|\leq CR^{-j}$ for $j=1,2$. Then $\exists x_0\in B_\frac{R}{2}$ such that
	\begin{align*}
	(\lambda\xi_G)(x_0)=\max\left\{(\lambda\xi_G)(x): x\in\bar B_\frac{R}{2}\right\}\geq\eta>0.
	\end{align*}
By \eqref{u0Bounds} we have $\xi_G\leq C$ for some $C(c_0,\Lambda_0, E_0,n)>0$ in $B_{R-1}$, and thus
	\begin{align*}
	\lambda(x_0)\geq\frac{\eta}{C}.
	\end{align*}
Moreover,
	\begin{align*}
	|\nabla\tilde u_0(x_0)|^2\geq2\xi_G(x_0)\geq2(\lambda\xi_G)(x_0)\geq2\eta\geq2\delta>0.
	\end{align*}
Since $x_0$ is a critical point, $\nabla(\lambda\xi_G)(x_0)=0$, and we get
	\begin{align*}
	|\nabla\xi_G(x_0)|=\lambda(x_0)^{-1}|\nabla\lambda(x_0)|\xi_G(x_0)\leq C(R\eta)^{-1}.
	\end{align*}
At a maximum point $x_0$, the Laplacian of the function $\lambda \xi_G$ satisfies
	\begin{align*}
	0&\geq\Delta(\lambda\xi_G)(x_0)\\
	&=\lambda(x_0)\Delta\xi_G(x_0)+2\langle\nabla\lambda(x_0),\nabla\xi_G(x_0)\rangle+\Delta\lambda(x_0)\xi_G(x_0),
	\end{align*}
and thus
	\begin{align}\label{LaplacianXiUpperBound}
	\Delta\xi_G(x_0)&\leq\lambda(x_0)^{-1}\left(C|\nabla\lambda(x_0)||\nabla\xi_G(x_0)|+|\Delta\lambda(x_0)||\xi_G(x_0)|\right)\\\nonumber
&\leq C\eta^{-1}\left(CR^{-1}(R\eta)^{-1}+CR^{-2}\right)\\\nonumber
&\leq CR^{-2}\eta^{-1}(1+\eta^{-1})\\\nonumber
&\leq CR^{-2}\eta^{-1}(1+\delta^{-1})\\\nonumber
&\leq CR^{-2}\eta^{-1}\delta^{-1},
	\end{align}
since $\delta\ll1$. Combining \eqref{LaplacianXiLowerBound} and \eqref{LaplacianXiUpperBound} we have
	\begin{align*}
	CR^{-2}\eta^{-1}\delta^{-1}&\geq-C\left(1+\frac{\delta}{|\nabla\tilde u_0(x_0)|}\right)\left(|\nabla\xi_G|+R^{4-\frac{n+1-2\delta_0}{n+1+2\delta_0}}\omega\right)+C(\delta^6+\delta^4)\\
	&\geq C\left[\left(1+\frac{\delta}{2\delta}\right)\left((R\eta)^{-1}+R^{4-\frac{n+1-2\delta_0}{n+1+2\delta_0}}\omega\right)+\delta^4\right].
	\end{align*}
Thus the last term above is bounded by
	\begin{align*}
	\delta^4\leq C\left(R^{-2}\eta^{-1}\delta^{-1}+(R\eta)^{-1}\right)+CR^{4-\frac{n+1-2\delta_0}{n+1+2\delta_0}}\omega.
	\end{align*}
By our choice of $p_1=2, p_2=15$, we have $R^{4-\frac{n+1-2\delta_0}{n+1+2\delta_0}}\omega=R^{15+5\frac{n+1-2\delta_0}{n+1+2\delta_0}}\ll\delta^4$. So
	\begin{align*}
	\delta^4&\leq C\left(R^{-2}\eta^{-1}\delta^{-1}+(R\eta)^{-1}\right),
	\end{align*}
dividing both sides by $\delta^4\eta^{-1}$ gives
	\begin{align*}
	\eta&\leq C\left(R^{-2}\delta^{-4}\delta^{-1}+R^{-1}\delta^{-4}\right)\\
	&\leq C\delta.
	\end{align*}
Namely, assuming \eqref{DefinitionEta} or not, we have
	\begin{align*}
	\xi_G\leq C\delta,
	\end{align*}
and thus by \eqref{Gradphi}
	\begin{align*}
	\frac{|\nabla\tilde u_0|^2}{2}-W(\tilde u_0)&=\xi_G+G_\delta(\tilde u_0)+\varphi\\
	&\leq C\delta+R^{4-\frac{n+1-2\delta_0}{n+1+2\delta_0}}\omega\\
	&\leq C\delta+\delta^{15+5\frac{n+1-2\delta_0}{n+1+2\delta_0}}\\
	&\leq C\delta.
	\end{align*}
This proves \eqref{eqn_u0L1discrepancydeltabound} and as a consequence \eqref{eqn_L1discrpancydeltabound}.
If $|\tilde u|\geq1-\tau$ in $B_\frac{1}{2}$, then \eqref{eqn_L1discrepancytaubound} follows because the left hand side is less than the second term on the right. So we only need to consider the case there exists $x_0\in B_\frac{1}{2}$ with $\tilde u(x_0)\leq 1-\tau$. By the Sobolev inequality and Calderon--Zygmund estimates we bound $\tilde u$ in the H\"older norm as follows
	\begin{align*}
	\|\tilde u\|_{C^{0,\frac{2\delta_0}{(n+1)+\delta_0}}(B_1)}&\leq\|\tilde u\|_{W^{2,\frac{n+1}{2}+\delta_0}(B_1)}\\
	&\leq \tilde C\left(\|W'(\tilde u)\|_{L^{\frac{n+1}{2}+\delta_0}(B_1)}+\|\tilde f\|_{L^{\frac{n+1}{2}+\delta_0}(B_1)}+\|\tilde u\|_{L^{\frac{n+1}{2}+\delta_0}(B_1)}\right)\\
	&\leq C.
	\end{align*}
Therefore $|\tilde u|\leq1-\frac{\tau}{2}$ and $W(\tilde u)\geq\frac{\tau^2}{4}$ in $B_{\left(\frac{\tau}{2C}\right)^\frac{(n+1)+\delta_0}{2\delta_0}}\subset B_1$. So
	\begin{align*}
	\int_{B_\frac{1}{2}}W(\tilde u)\geq\frac{\tau^2}{4}\left(\frac{\tau}{2\tilde C_2}\right)^\frac{(n+1)[(n+1)+\delta_0]}{2\delta_0}=C\tau^\frac{(n+1)^2+(n+1)\delta_0+4\delta_0}{2\delta_0}.
	\end{align*}
By our choice $p_3=\frac{2\delta_0}{(n+1)^2+(n+1)\delta_0+6\delta_0}$,
	\begin{align*}
	\int_{B_\frac{1}{2}}\left(\frac{|\nabla\tilde u|^2}{2}-W(\tilde u)\right)_+&\leq C\delta\\
	&\leq C\tau^\frac{(n+1)^2+(n+1)\delta_0+6\delta_0}{2\delta_0}\\
	&\leq C\tau\tau^\frac{(n+1)^2+(n+1)\delta_0+4\delta_0}{2\delta_0}\\
	&\leq C\tau\int_{B_\frac{1}{2}}\left(\frac{|\nabla\tilde u|^2}{2}+W(\tilde u)\right),
	\end{align*}
which proves \eqref{eqn_L1discrepancytaubound}.
\end{proof}
													Next we derive energy estimates away from transition regions.
\begin{proposition} [{\cite[Proposition 3.4]{roger2006modified}}]\label{prop_errorterms}
For any $ n\geq 2$ , $0\leq\delta\leq\delta_1$, $\e> 0, u_\e\in C^2(\Omega), f_\e \in C^0(\Omega)$, if
	\begin{align*}
	-\e \Delta u_\e + \frac{W'(u_\e)}{\e} = f_\e \quad \text{ in $\Omega$}
	\end{align*}
and
	\begin{align*}
	\Omega'\subset\subset \Omega, 0 < r \leq d(\Omega',\partial \Omega)
	\end{align*}
then
	\begin{align*}
	&\int_{\{|u_\e|\geq 1 - \delta \} \cap\Omega' }\left( \e |\nabla u_\e|^2 + \frac{W(u_\e)}{\e} + \frac{W'(u_\e)^2}{\e} \right) \\
	&\leq C\delta \int_{\{| u_\e| \leq 1 -\delta \} \cap \Omega} \e |\nabla u_\e|^2 + C \e \int_\Omega |f_\e|^2 + C\left(\frac{\delta}{r} + \frac{\delta^2}{r^2} \right) \e \mathcal L^{n+1}(\Omega)+\frac{C\e}{r^2} \int_{\{|u_\e|\geq 1\} \cap \Omega}W'(u_\e)^2.
	\end{align*}
(Notice the power of $f_\e$ in the above inequality will still be 2 instead of $\frac{n+1}{2}+\delta_0$.)
\end{proposition}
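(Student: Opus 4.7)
The plan is to prove this Caccioppoli-type estimate by testing the Allen--Cahn equation
\[
-\e\Delta u_\e + \frac{W'(u_\e)}{\e} = f_\e
\]
against a function of the form $\zeta^2 \eta(u_\e)$, where $\zeta\in C_c^\infty(\Omega)$ is a spatial cutoff with $\zeta\equiv 1$ on $\Omega'$, $0\leq\zeta\leq 1$, $|\nabla\zeta|\leq C/r$, $|\nabla^2\zeta|\leq C/r^2$, and $\eta$ is a nonlinear function of $u$ localized to the well region. The algebraic backbone is the identity $W'(u)^2 = 4u^2(1-u^2)^2 = 8u^2 W(u)$, which on $\{|u|\geq 1-\delta\}$ (for $\delta$ small) gives $W'(u)^2 \geq cW(u)$, so that any bound on $\int W'^2/\e$ automatically controls $\int W/\e$ on the well region.

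I would choose $\eta\in C^1(\mathbb R)$ satisfying $\eta\equiv 0$ on $[-(1-\delta),1-\delta]$, $\eta(u)=W'(u)$ for $|u|\geq 1$, $\eta(u)W'(u)\geq 0$ everywhere, and $|\eta(u)|\leq C\delta$ with $0\leq \eta'(u)\leq C$ on the transition $\{1-\delta\leq |u|\leq 1\}$. Multiplying the PDE by $\zeta^2\eta(u_\e)/\e$ and integrating by parts gives
\begin{align*}
\int\zeta^2 \eta'(u_\e)\e|\nabla u_\e|^2 + \int\zeta^2\frac{\eta(u_\e)W'(u_\e)}{\e} = -2\int\e\zeta\eta(u_\e)\nabla u_\e\cdot\nabla\zeta + \int\zeta^2\eta(u_\e)f_\e.
\end{align*}
By the sign property the second LHS term dominates $c\int_{\{|u_\e|\geq 1-\delta\}\cap\Omega'}\zeta^2 W'(u_\e)^2/\e$, and the first LHS term is a nonnegative transition-layer gradient piece retained on the LHS.

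The forcing term is estimated by Cauchy--Schwarz, $\bigl|\int\zeta^2\eta(u_\e)f_\e\bigr|\leq \tfrac{1}{2}\int \zeta^2 \eta(u_\e)W'(u_\e)/\e + C\e\int f_\e^2$, with the first piece absorbed into the LHS. The cross term $-2\int\e\zeta\eta(u_\e)\nabla u_\e\cdot\nabla\zeta$ is decomposed over the three natural $u$-regions. On the thin transition layer $\{1-\delta\leq |u_\e|\leq 1\}$, using $|\eta|\leq C\delta$ and Young's inequality at two different scales (one with parameter $\sim 1$ and one with parameter $\sim r/\delta$) produces the combined error $C(\delta/r + \delta^2/r^2)\e\mathcal L^{n+1}(\Omega)$. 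On the outer region $\{|u_\e|\geq 1\}$, using $|\eta|\leq |W'|$ and Young's inequality yields the tail $(C\e/r^2)\int_{\{|u_\e|\geq 1\}}|W'|^2$. Finally, a further integration by parts that moves the derivative from $u_\e$ onto $\zeta$ in the cross term (introducing a primitive of $\eta$) produces a mixed contribution which, upon Young's inequality against the bulk middle-region gradient, yields the term $C\delta\int_{\{|u_\e|\leq 1-\delta\}\cap\Omega}\e|\nabla u_\e|^2$.

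This establishes the $W'^2/\e$ bound on the well region, and the $W/\e$ bound follows immediately from $W\leq CW'^2$ on $\{1-\delta\leq |u|\leq c_0+1\}$ (invoking condition (2) of Theorem \ref{main}). The gradient bound $\int_{\{|u_\e|\geq 1-\delta\}\cap\Omega'}\e|\nabla u_\e|^2$ is obtained by a parallel test against $\zeta^2(u_\e - T(u_\e))$, where $T$ truncates to $[-(1-\delta),1-\delta]$, since $(u-T(u))'=\chi_{\{|u|\geq 1-\delta\}}$ isolates the well-region gradient while $(u-T(u))W'(u)\geq 0$, and the same decomposition of the cross term produces the same family of error terms. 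The main obstacle is the bookkeeping in the cross-term decomposition: producing \emph{three} structurally different error terms ($C\delta\int\e|\nabla u_\e|^2$ on the bulk middle, $C(\delta/r+\delta^2/r^2)\e\mathcal L^{n+1}$ from the transition layer, and $(C\e/r^2)\int W'^2$ on the outer set) from a single cross term requires calibrating three independent Young's inequalities on disjoint pieces, and it is this calibration --- together with the secondary integration by parts that transfers a derivative onto $\zeta$ to generate the $C\delta\cdot\int_{\{|u|\leq 1-\delta\}}$ term --- that is the delicate part of the proof.
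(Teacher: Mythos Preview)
Your approach is essentially the paper's: multiply the equation by $\zeta^2 g(u_\e)$ for a suitable nonlinear $g$ and integrate by parts. The paper's execution is cleaner in one important respect. It takes $g=W'$ on the \emph{entire} well region $\{|t|\geq 1-\delta\}$ (not just $\{|t|\geq 1\}$), connected to $0$ by a linear piece on $[t_0,1-\delta]$ with $t_0=1/\sqrt{3}$. Since $g'=W''\geq C_W>0$ throughout $\{|t|\geq 1-\delta\}$, the term $\int \zeta^2 g'(u_\e)\,\e|\nabla u_\e|^2$ already controls $\int_{\{|u_\e|\geq 1-\delta\}\cap\Omega'}\e|\nabla u_\e|^2$, so no second test with $u-T(u)$ is needed.

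Your specification of $\eta$ has a small inconsistency: requiring $\eta\equiv 0$ on $[-(1-\delta),1-\delta]$, $\eta(1)=W'(1)=0$, $\eta W'\geq 0$ (hence $\eta\leq 0$ on $(1-\delta,1)$ where $W'<0$), and $\eta'\geq 0$ on that interval forces $\eta\equiv 0$ on $[1-\delta,1]$, so the test yields nothing on that strip. The paper avoids this by letting $g$ jump to $W'(1-\delta)\neq 0$ via the linear ramp. Relatedly, your account of where the term $C\delta\int_{\{|u_\e|\leq 1-\delta\}}\e|\nabla u_\e|^2$ comes from is off: with your $\eta$ it simply does not appear (which is harmless for an upper bound). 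In the paper it arises because on the ramp region $\{t_0\leq|u_\e|\leq 1-\delta\}$ both $|g|\leq C\delta$ and $|g'|\leq C\delta$, so the $g'$ term (moved to the right) and the cross term each contribute $C\delta\int_{\{|u_\e|\leq 1-\delta\}}\e|\nabla u_\e|^2$; there is no ``further integration by parts introducing a primitive of $\eta$.'' These are bookkeeping issues rather than a genuine gap; the skeleton of your argument is correct and matches the paper.
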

\begin{proof}
Define a continuous function
	\begin{align*}
	g(t)=
\begin{cases}
W'(t),&\text{ for $|t|\geq1-\delta$}\\
	0,&\text{ for $|t|\leq t_0$}\\
	\text{ linear,}&\text{ for $t\in[-1+\delta,-t_0]\cup[t_0,1-\delta]$},
\end{cases}
	\end{align*}
where $t_0=\frac{1}{\sqrt 3}$ is chosen to be the number in $(0,1)$ such that $W''(t_0)=0$. Clearly $|g|\leq |W'|$.
For $\eta\in C_0^1(\Omega)$ satisfying $0\leq\eta\leq 1$, $\eta\equiv1$ on $\Omega'$ and $|\nabla\eta|\leq Cr^{-1}$, we get by integration by parts
	\begin{align}\label{IntegrationByPartErrorTermEstimate}
	\begin{split}
\int_{\Omega} f_\e g(u_\e)\eta^2&=\int_{\Omega}\left(-\e\Delta u_\e+\frac{W'(u_\e)}{\e}\right)g(u_\e)\eta^2\\
	&=\int_{\Omega}\e g'(u_\e)|\nabla u_\e|^2\eta^2+2\int_{\Omega}\e g(u_\e)\eta\langle\nabla u_\e,\nabla\eta\rangle+\int_{\Omega}\frac{W'(u_\e)}{\e}g(u_\e)\eta^2.
\end{split}
	\end{align}
The left hand side of \eqref{IntegrationByPartErrorTermEstimate} can be bounded by
	\begin{align}\label{LHSErrorTermEstimate}
	\int_\Omega f_\e g(u_\e)\eta^2&\leq\frac{\e}{2}\int_\Omega |f_\e|^2+\frac{1}{2\e}\int_\Omega g(u_\e)^2\eta^2\leq\frac{\e}{2}\int_\Omega |f_\e|^2+\frac{1}{2\e}\int_\Omega W'(u_\e)g(u_\e)\eta^2.
	\end{align}
By the definition of $g$ above, we have
	\begin{align*}
	|g(t)|&\leq |g(1-\delta)|=W'(1-\delta)\leq C\delta,\\
	|g'(t)|&\leq \frac{|g(1-\delta)|}{1-\delta}\leq \frac{|g(1-\delta)|}{1-\delta_1}\leq C\delta,
	\end{align*}
for $|t|\leq1-\delta$. Applying these estimates to the second term on the right hand side of \eqref{IntegrationByPartErrorTermEstimate} we get the bound
	\begin{align}\label{RHSErrorTermEstimate}
	&\left|2\int_{\Omega}\e g(u_\e)\eta\langle\nabla u_\e,\nabla\eta\rangle\right|\\\nonumber
&\leq 2\delta\int_{\{|u_\e|\leq1-\delta\}}\e\eta|\nabla u_\e||\nabla\eta|+\left|\int_{\{|u_\e|\geq1-\delta\}}\e W'(u_\e)\langle\nabla u_\e,\nabla\eta\rangle\right|\\\nonumber
&\leq C\delta\int_{\{|u_\e|\leq1-\delta\}}\e|\nabla u_\e|^2+\e\delta r^{-1}\mathcal L^{n+1}(\Omega)+\tau\int_{\{|u_\e|\geq1-\delta\}}\e|\nabla u_\e|^2\eta^2+C\e \tau^{-1} r^{-2}\int_{\{|u_\e|\geq1-\delta\}}W'(u_\e)^2,
	\end{align}
for $\tau>0$.
As $g'(t)=W''(t)\geq C_W>0$ for $|t|\geq1-\delta$, we obtain from \eqref{IntegrationByPartErrorTermEstimate}, \eqref{LHSErrorTermEstimate} and \eqref{RHSErrorTermEstimate}
	\begin{align*}
	C_W\int_{\{|u_\e|\geq1-\delta\}}\e |\nabla u_\e|^2&+\frac{1}{2\e}\int_\Omega W'(u_\e)g(u_\e)\eta^2\\
	&\leq C_W\delta\int_{\{|u_\e|\leq1-\delta\}}\e|\nabla u_\e|^2+\tau\int_{\{|u_\e|\geq1-\delta\}}\e|\nabla u_\e|^2\eta^2+\frac{\e}{2}\int_\Omega |f_\e|^2\\
	&+\left(\delta r^{-1}+C\delta^2\tau^{-1}r^{-2}\right)\mathcal L^{n+1}(\Omega)+C\e\tau^{-1}r^{-2}\int_{\{|u_\e|\geq1\}}W'(u_\e)^2.
	\end{align*}
Choosing $\tau=\frac{C_W}{2}$, and using $W(t)\leq C_WW'(t)^2$ for $|t|\geq1-\delta$ we get
	\begin{align*}
	&\int_{\{|u_\e|\geq1-\delta\}\cap\Omega'}\left(\e|\nabla u_\e|^2+\frac{W(u_\e)}{\e}+\frac{W'(u_\e)^2}{\e}\right)\\
	&\leq C\int_{\{|u_\e|\geq1-\delta\}\cap\Omega'}\left(\e|\nabla u_\e|^2+\frac{W'(u_\e)^2}{\e}\right)\\
	&\leq C\delta\int_{\{|u_\e|\leq1-\delta\}}\e|\nabla u_\e|^2+C\e\int_\Omega |f_\e|^2+C\e\left(\delta r^{-1}+\delta^2r^{-2}\right)\mathcal L^{n+1}(\Omega)\\
	&+C\e r^{-2}\int_{\{|u_\e|\geq1\}}W'(u_\e)^2,
	\end{align*}
which completes the proof.
\end{proof}
The following proposition shows for all $\e$ sufficiently small, if $u_\e$ satisfies the inhomogeneous Allen--Cahn equation then we can control the last term $\int_{\{|u_\e|\geq 1\} \cap \Omega'} W'(u_\e)^2$ in Proposition \ref{prop_errorterms} by applying the proposition inductively.
\begin{proposition} [{\cite[Proposition 3.5]{roger2006modified}}]\label{prop_errorbound2}
For $n\geq 2, \e> 0, u_\e\in C^2(\Omega), f_\e \in C^0(\Omega)$, if
	\begin{align*}
	-\e \Delta u_\e + \frac{W'(u_\e)}{\e} = f_\e \quad \text{ in $\Omega$}
	\end{align*}
and $\Omega'\subset\subset \Omega, 0 < r \leq d(\Omega',\partial \Omega)$ then
	\begin{align*}
	\int_{\{|u_\e|\geq 1\} \cap \Omega'} W'(u_\e)^2 \leq C_k( 1 + r^{-2k}\e^{2k}) \e^2 \int_{\Omega'_{i-1}} |f_\e|^2 + C_k r^{-2k} \e^{2k} \int_{\{|u_\e| \geq 1\} \cap \Omega} W'(u_\e)^2
	\end{align*}
for all $ k \in \mathbb N_0$.
\end{proposition}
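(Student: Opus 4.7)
The plan is to prove the bound by induction on $k \in \mathbb{N}_0$, treating Proposition \ref{prop_errorterms} at $\delta = 0$ as the base iteration and bootstrapping by applying it on a chain of nested subdomains. For $k = 0$, the statement reduces to the trivial monotonicity $\int_{\{|u_\e|\geq 1\}\cap\Omega'}W'(u_\e)^2 \leq \int_{\{|u_\e|\geq 1\}\cap\Omega}W'(u_\e)^2$, so nothing has to be shown.

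The key input for the inductive step is to specialize Proposition \ref{prop_errorterms} to $\delta = 0$. With this choice the terms proportional to $\delta$, $\delta/r$ and $\delta^2/r^2$ all drop out, and multiplying through by $\e$ one obtains
\begin{align*}
\int_{\{|u_\e|\geq 1\}\cap\Omega'} W'(u_\e)^2 \leq C\e^2\int_{\Omega}|f_\e|^2 + \frac{C\e^2}{r^2}\int_{\{|u_\e|\geq 1\}\cap\Omega} W'(u_\e)^2,
\end{align*}
which is exactly the claimed bound for $k = 1$.

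To advance from $k-1$ to $k$, I would introduce an intermediate domain $\hat\Omega$ satisfying $\Omega' \subset\subset \hat\Omega \subset\subset \Omega$ with $d(\Omega',\partial\hat\Omega) \geq r/2$ and $d(\hat\Omega,\partial\Omega) \geq r/2$, apply the $k=1$ case on the pair $(\Omega',\hat\Omega)$ with radius $r/2$, and then invoke the inductive hypothesis on the pair $(\hat\Omega,\Omega)$ with the same radius $r/2$ to control the residual integral of $W'(u_\e)^2$ over $\hat\Omega$. Substituting and collecting like terms produces a bound of the form $C_k(1 + r^{-2k}\e^{2k})\e^2 \int_{\Omega}|f_\e|^2 + C_k r^{-2k}\e^{2k}\int_{\{|u_\e|\geq 1\}\cap\Omega}W'(u_\e)^2$, with $C_k$ depending on $k$ and $C$ from the $k=1$ estimate.

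The only real work is combinatorial rather than analytic: one has to verify that the inductive constants $C_k$ remain finite and that the telescoping geometric sum of contributions of the form $(\e^2/r^2)^j$ collapses exactly into the advertised $1 + r^{-2k}\e^{2k}$ profile. Because no new PDE ingredient beyond Proposition \ref{prop_errorterms} is required, I expect no genuine obstacle — the only care point is the bookkeeping of the nested domains to ensure that at each of the $k$ iterations the distance to the boundary remains at least $r/2$ so that Proposition \ref{prop_errorterms} may legitimately be applied.
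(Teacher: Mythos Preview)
Your proposal is correct and follows the same strategy as the paper: specialize Proposition~\ref{prop_errorterms} to $\delta=0$ and iterate on a chain of nested subdomains. The only cosmetic difference is that the paper fixes all $k$ intermediate domains $\Omega'=\Omega_k'\subset\subset\cdots\subset\subset\Omega_0'=\Omega$ upfront with uniform spacing $d(\Omega_i',\Omega_{i-1}')\geq r/k$ and then applies the $\delta=0$ estimate $k$ times in succession, rather than bisecting inductively as you do; this yields the same structure with constant $C_k\sim (Ck^2)^k$ and avoids the need to track how the radius shrinks through the recursion.
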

\begin{proof}
For any $k\in\mathbb N^+$ we choose a sequence of open sets
	\begin{align*}
	\Omega_i':=\begin{cases}
\Omega&\text{ for }i=0\\\left\{x\in\Omega|d(x,\Omega')<\frac{(k-i)r}{k}\right\}&\text{ for } i=1,...,k-1,\\
	\Omega'&\text{ for } i=k.
\end{cases}
	\end{align*}
This sequence satisfies
	\begin{align*}
	\Omega'=\Omega_k'\subset\subset\Omega_{k-1}'\subset\subset...\subset\subset\Omega_0'=\Omega,
	\end{align*}
with $d(\Omega_i',\Omega_{i-1}')\geq\frac{r}{k}$ for $i=1,...,k$. Applying Proposition \eqref{prop_errorterms} with $\delta=0$, we have
	\begin{align*}
	\int_{\{|u_\e|\geq1\}\cap\Omega_i'}W'(u_\e)^2\leq C\e^2\int_{\Omega'_{i-1}} |f_\e|^2+Ck^2r^{-2}\e^2\int_{\{|u_\e|\geq1\}\cap\Omega_{i-1}'}W'(u_\e)^2,
	\end{align*}
for $i=1,...,k$.
The conclusion is obtained by applying the above inequality inductively $k$ times.
\end{proof}
We conclude with the following integral bound for positive part of discrepancy measure.
\begin{lemma}[{\cite[Lemma 3.1]{roger2006modified}} for all $n$]\label{DiscrepancyEstimateAllScale}
Let $n\geq2$, $0<\delta\leq\delta_1$ (where $\delta_1$ given as in Lemma \ref{lem_L1discrepancydeltabound}), $0<\e\leq\rho$, $\rho_0:=\max\{2,1+\delta^{-M}\e\}\rho$ for some large universal constant $M$. If $u_\e\in C^2(B_{\rho_0}), f_\e\in C^0(B_{\rho_0})$ satisfies \eqref{PFVe} in $B_{\rho_0}(0)$ then the positive part of the discrepancy measure satisfies
	\begin{align*}
	\rho^{-n}\int_{B_\rho}\left(\frac{\e|\nabla u_\e|^2}{2}-\frac{W(u_\e)}{\e}\right)_+&\leq C\delta^{p_3}\rho^{-n}\int_{B_{2\rho}}\left(\frac{\e|\nabla u_\e|^2}{2}+\frac{W(u_\e)}{\e}\right)+C\delta^{-M}\e\rho^{-n}\int_{B_{\rho_0}}|f_\e|^2\\
	&+C\delta^{-M}\rho^{-n}\int_{B_{\rho_0}\cap\{|u_\e|\geq1\}}\frac{W'(u_\e)^2}{\e}+C\left(\frac{\e}{\rho}\right)\delta.
	\end{align*}
\end{lemma}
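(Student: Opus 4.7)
The plan is to promote the \emph{microscopic} discrepancy bound \eqref{eqn_L1discrepancytaubound} of Lemma \ref{lem_L1discrepancydeltabound} to the \emph{macroscopic} scale $\rho$ via a covering argument, then absorb the transition-region term $\int_{\{|u_\e|\geq 1-\tau\}}\tfrac{\e|\nabla u_\e|^2}{2}$ using Proposition \ref{prop_errorterms}, and finally iterate Proposition \ref{prop_errorbound2} to reduce the tail $W'(u_\e)^2$ integral to the set $\{|u_\e|\geq 1\}$.

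\textbf{Step 1 (Covering and rescaling).} Cover $B_\rho$ by a family of balls $\{B_{\e/2}(x_i)\}_i$ with centers $x_i\in B_\rho$ and bounded overlap $\leq C(n)$. Since $\rho_0\geq\rho+\delta^{-M}\e\rho$ with $M$ chosen at least as large as $p_1$, every enlarged ball $B_{\e R}(x_i)$ with $R=\delta^{-p_1}$ is contained in $B_{\rho_0}$. On each such ball set
\begin{equation*}
\tilde u_i(y):=u_\e(\e y+x_i),\qquad \tilde f_i(y):=\e f_\e(\e y+x_i),
\end{equation*}
so that $-\Delta\tilde u_i+W'(\tilde u_i)=\tilde f_i$ on $B_R$. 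Under the scaling $x=\e y+x_i$ the total energy, positive discrepancy, and the transition set all scale by the same factor $\e^{n}$, so an estimate of the form of \eqref{eqn_L1discrepancytaubound} on $B_{1/2}$ transfers directly to the same estimate on $B_{\e/2}(x_i)$.

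\textbf{Step 2 (Microscopic discrepancy).} Split the indices $i$ into a \emph{good} set $G$ on which $\|\tilde f_i\|_{L^{(n+1)/2+\delta_0}(B_R)}\leq\omega=\delta^{p_2}$ and a \emph{bad} set $B$ on which the reverse holds. For $i\in G$, apply \eqref{eqn_L1discrepancytaubound} with $\tau=\delta^{p_3}$ and rescale to obtain
\begin{equation*}
\int_{B_{\e/2}(x_i)}\!\left(\tfrac{\e|\nabla u_\e|^2}{2}-\tfrac{W(u_\e)}{\e}\right)_{\!+}\!\leq C\delta^{p_3}\!\!\int_{B_{\e/2}(x_i)}\!\left(\tfrac{\e|\nabla u_\e|^2}{2}+\tfrac{W(u_\e)}{\e}\right)+\!\!\int_{B_{\e/2}(x_i)\cap\{|u_\e|\geq 1-\delta^{p_3}\}}\!\!\tfrac{\e|\nabla u_\e|^2}{2}.
\end{equation*}
For $i\in B$, bound the positive discrepancy trivially by the full energy on $B_{\e/2}(x_i)$, and charge the total contribution to $\e\int_{B_{\rho_0}}|f_\e|^2$ via Markov's inequality after rescaling $\tilde f_i$ back to $f_\e$ and converting the $L^{(n+1)/2+\delta_0}$ excess to an $L^2$ mass with H\"older-type interpolation; the overall constant absorbed this way is at most a power $\delta^{-M}$.

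\textbf{Step 3 (Summing and boundary estimate).} Summing the above over $i$ and using bounded overlap yields
\begin{equation*}
\int_{B_\rho}\!\!\left(\tfrac{\e|\nabla u_\e|^2}{2}-\tfrac{W(u_\e)}{\e}\right)_{\!+}\!\leq C\delta^{p_3}\!\int_{B_{2\rho}}\!\left(\tfrac{\e|\nabla u_\e|^2}{2}+\tfrac{W(u_\e)}{\e}\right)+C\!\int_{B_{2\rho}\cap\{|u_\e|\geq 1-\delta^{p_3}\}}\!\!\tfrac{\e|\nabla u_\e|^2}{2}+C\delta^{-M}\e\!\int_{B_{\rho_0}}\!|f_\e|^2.
\end{equation*}
To the transition-region integral apply Proposition \ref{prop_errorterms} with $\delta\leadsto\delta^{p_3}$, $\Omega'=B_{2\rho}$, $\Omega=B_{\rho_0}$, and $r\sim\delta^{-M}\e\rho$; this generates (i) a further $C\delta^{p_3}$-multiple of the total energy on $B_{\rho_0}$ which can be absorbed into the first term, (ii) a further $C\e\int_{B_{\rho_0}}|f_\e|^2$, (iii) the geometric error $C(\delta^{p_3}/r+\delta^{2p_3}/r^2)\e\mathcal L^{n+1}(B_{\rho_0})$ which, after dividing by $\rho^n$ and using $r\sim\delta^{-M}\e\rho$, contributes at most $C(\e/\rho)\delta$, and (iv) a tail integral $C\e r^{-2}\!\int_{\{|u_\e|\geq 1\}\cap B_{\rho_0}}\!W'(u_\e)^2$. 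Iterate Proposition \ref{prop_errorbound2} sufficiently many times so that the factor $r^{-2k}\e^{2k}\leq\delta^{2kM}$ is reduced to an acceptable power, ultimately depositing the $W'(u_\e)^2$ contribution as $C\delta^{-M}\rho^{-n}\!\int_{B_{\rho_0}\cap\{|u_\e|\geq 1\}}\!W'(u_\e)^2/\e$ after dividing by $\rho^n$.

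\textbf{Main obstacle.} The hard part is the bookkeeping of the exponents: one must choose the constant $M$ (and consistently re-choose $p_1,p_2,p_3$) large enough that (a) the enlarged balls $B_{\e R}(x_i)$ fit inside $B_{\rho_0}$, (b) the ``bad ball'' contribution aggregates into $C\delta^{-M}\e\int_{B_{\rho_0}}|f_\e|^2$ with the right power of $\delta$, (c) the geometric error in Proposition \ref{prop_errorterms} collapses precisely to $C(\e/\rho)\delta$ rather than to something depending additionally on $\delta^{-M}$, and (d) the iteration depth $k$ in Proposition \ref{prop_errorbound2} is large enough for $r^{-2k}\e^{2k}$ to dominate the extra $\delta^{-M}$ factors accumulated in steps (b)--(c). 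Once these exponent inequalities are reconciled, the four terms in the conclusion line up directly with the four sources of error identified above.
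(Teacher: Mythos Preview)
Your overall architecture (cover $B_\rho$ by $\e$-balls, split into good and bad, apply Lemma \ref{lem_L1discrepancydeltabound} on the good balls, then use Proposition \ref{prop_errorterms} on the transition-region integral) matches the paper. The genuine gap is in your handling of the \emph{bad} balls.

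You define the bad set by the single condition $\|\tilde f_i\|_{L^{(n+1)/2+\delta_0}(B_R)}>\omega$ and then propose to ``charge the total contribution to $\e\int|f_\e|^2$ via Markov's inequality \dots converting the $L^{(n+1)/2+\delta_0}$ excess to an $L^2$ mass with H\"older-type interpolation.'' For $n\ge 3$ one has $\tfrac{n+1}{2}+\delta_0>2$, so H\"older goes the wrong way: you cannot control a higher $L^p$ norm of $f_\e$ by its $L^2$ norm. Thus Step~2 does not deliver the term $C\delta^{-M}\e\int|f_\e|^2$, and it produces no $W'(u_\e)^2/\e$ contribution at all. Yet the $W'(u_\e)^2/\e$ term in the conclusion is not a residue of Proposition \ref{prop_errorterms} after iterating Proposition \ref{prop_errorbound2}; it is exactly what the bad balls contribute.

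The paper's fix is to enlarge the \emph{good} set by a second smallness condition, on $\|(|u_\e|-1)_+\|_{L^1(B_{2R\e}(x_i))}$. Then on a bad ball at least one of the two conditions fails, so the right-hand side
\[
\int_{B_{2R}\cap\{|\tilde u_i|\ge 1\}}W'(\tilde u_i)^2+\omega^{-2}\int_{B_{2R}}\tilde f_i^2
\]
is bounded below by a fixed constant $C_\omega$. Now apply the elliptic $L^2$ estimate $\int_{B_{1/2}}|\nabla\tilde u_i|^2\le C\int_{B_1}(W'(\tilde u_i)^2+\tilde u_i^2+\tilde f_i^2)$ and absorb the harmless $\int\tilde u_i^2\le C$ into that lower bound. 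Summing over bad balls (with overlap $\le CR^{n+1}$ of the $B_{2R\e}$-balls) yields directly
\[
\sum_{i\in B}\int_{B_{\e/2}(x_i)}\e|\nabla u_\e|^2\ \le\ C\delta^{-M}\int_{B_{\rho_0}}\Bigl(\tfrac{W'(u_\e)^2}{\e}+\e|f_\e|^2\Bigr),
\]
with no Markov counting and no $L^p\to L^2$ conversion. This is where both the $\e\int|f_\e|^2$ and the $\delta^{-M}\int W'^2/\e$ terms in the statement originate. Proposition \ref{prop_errorbound2} is not used in this lemma at all; it enters only later (e.g.\ in Lemma \ref{L1PositiveDiscrepancyVanishing} and Proposition \ref{prop_DiscrepancyEnergyRatioLowerBound}). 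Also, in Step~3 your choice $r\sim\delta^{-M}\e\rho$ for Proposition \ref{prop_errorterms} does not satisfy $r\le d(\Omega',\partial\Omega)$ when $\rho_0=2\rho$; the paper simply takes $\Omega'=B_{3\rho/2}$, $\Omega=B_{2\rho}$, $r=\rho/2$.
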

\begin{proof}
We prove the case $0<\e\leq\rho=1$. The case for other $\rho>0$ follows by rescaling to $\rho=1$.
For $0<\delta\leq\delta_1$ we choose $R(\delta)=\frac{1}{\delta^{p_1}}$ and $\omega(\delta)=C_\omega\delta^{p_2}$ as in Lemma \ref{lem_L1discrepancydeltabound}. Let $\{x_i\}_{i\in \mathbf I}\subset B_1, \mathbf I\subset\mathbb N$ be a maximal collection of points satisfying
	\begin{align*}
	\min_{i\neq j}|x_i-x_j|\geq\frac{\e}{2}.
	\end{align*}
Since $\e\leq1$, we have
	\begin{align*}
	&B_1(0)\subset\cup_{i\in\mathbf I}\bar{B}_{\frac{\e}{2}}(x_i)\subset B_{\frac{3}{2}}(0),\\
	&\sum_{i\in\mathbf I}\chi_{B_\e(x_i)}\leq C_n\chi_{B_2(0)},\\
	&\sum_{i\in\mathbf I}\chi_{B_{2R\e}(x_i)}\leq C_nR^{n+1}\chi_{B_{1+2R\e}(0)}.
	\end{align*}
For $i\in\mathbf I$ and $x\in B_{2R}$, we define the rescaled and translated functions as
	\begin{align*}
	\tilde u_i(x)&:=u_\e(x_i+\e x),\\
	\tilde f_i(x)&:=\e f_\e(x_i+\e x),
	\end{align*}
which satisfy the rescaled equation
	\begin{align}\label{RescaledEquationUi}
	-\Delta\tilde u_i+W'(\tilde u_i)=\tilde f_i,\quad\text{ in $B_{2R}(0)$}.
	\end{align}
For $\tilde u_i, \tilde f_i$ to be well-defined, we choose $M\geq5n+6$ and $\delta_1\leq\frac{1}{2}$ so that
	\begin{align*}
	x_i+\e x\in B_{1+2R\e}(0)\subset B_{1+\delta^{-M}\e}(0)\subset B_{\rho_0}(0).
	\end{align*}
We decompose the index set $\mathbf I$ into
	\begin{align*}
	\mathbf I_1&:=\{i\in\mathbf I:\|f_\e\|_{L^{\frac{n+1}{2}+\delta_0}(B_{2R\e}(x_i))}<\e^{\frac{n+1}{2}-1+\delta_0}\omega, \|(|u_\e|-1)_+\|_{L^1(B_{2R\e}(x_i))}<C_\omega\e^{n+1}\},\\
	\mathbf I_2&:=\mathbf I\setminus\mathbf I_1.
	\end{align*}
For $i\in\mathbf I_1$, we have
	\begin{align*}
	&\|\tilde f_i\|_{L^{\frac{n+1}{2}+\delta_0}(B_{2R}(0))}=\e^{-\frac{n+1}{2}-\delta_0}\|\e f_\e\|_{L^{\frac{n+1}{2}+\delta_0}(B_{2R\e}(x_i))}<\omega\leq C_\omega,\\
	&\|(|\tilde u_i|-1)_+\|_{L^1(B_{2R}(x_i))}=\e^{-n-1}\|(| u_\e|-1)_+\|_{L^1(B_{2R\e}(x_i))}<C_\omega.
	\end{align*}
By the condition $\|u\|_{L^\infty}\leq c_0$ in the condition of Theorem \ref{main}, and choosing $C_\omega$ sufficiently small, we have
	\begin{align*}
	\|\tilde u_i\|_{L^\infty(B_R)}\leq 1+C\cdot C_\omega\leq 2.
	\end{align*}
Applying Lemma \ref{lem_L1discrepancydeltabound} to $\tilde u_i$ gives (with $p_3$ from Lemma \ref{lem_L1discrepancydeltabound})
	\begin{align*}
	&\int_{B_\frac{1}{2}}\left(\frac{|\nabla\tilde u_i|^2}{2}-W(\tilde u_i)\right)_+\\
	&\leq C{\delta^{p_3}}\int_{B_\frac{1}{2}}\left(\frac{|\nabla\tilde u_i|^2}{2}+W(\tilde u_i)\right)+\int_{B_\frac{1}{2}\cap\{|\tilde u_i|\geq1-\delta\}}\frac{|\nabla \tilde u_i|^2}{2}.
	\end{align*}
Rescaling back, we get
	\begin{align*}
	&\int_{B_\frac{\e}{2}(x_i)}\left(\frac{\e|\nabla u_\e|^2}{2}-\frac{W( u_\e)}{\e}\right)_+\\
	&\leq C{\delta^{p_3}}\int_{B_\frac{\e}{2}(x_i)}\left(\frac{\e|\nabla u_\e|^2}{2}+\frac{W( u_\e)}{\e}\right)+\int_{B_\frac{\e}{2}(x_i)\cap\{| u_\e|\geq1-\delta\}}\frac{\e|\nabla u_\e|^2}{2}.
	\end{align*}
Summing over $i\in\mathbf I_1$ and noticing $B_{\frac{\e}{2}}(x_i)$ are disjoint, we get
	\begin{align}\label{SumI1}
	&\sum_{i\in\mathbf I_1}\int_{B_\frac{\e}{2}(x_i)}\left(\frac{\e|\nabla u_\e|^2}{2}-\frac{W( u_\e)}{\e}\right)_+\\\nonumber
&\leq C\delta^{p_3}\int_{B_\frac{3}{2}(0)}\left(\frac{\e|\nabla u_\e|^2}{2}+\frac{W( u_\e)}{\e}\right)+C\int_{B_\frac{3}{2}(0)\cap\{|u_\e|\geq1-\delta\}}\frac{\e|\nabla u_\e|^2}{2}\\\nonumber
&\leq C\delta^{p_3}\int_{B_2(0)}\left(\frac{\e|\nabla u_\e|^2}{2}+\frac{W( u_\e)}{\e}\right)+C\e\int_{B_2(0)}|f_\e|^2+C\e\left(\delta+\int_{B_2(0)\cap\{|u_\e|\geq1\}}W'(u_\e)^2\right),
	\end{align}
where we used Proposition \ref{prop_errorterms} in the last line.
Since for $n\geq3$ (the $n=2$ case requires $\delta_0\geq\frac{1}{2}$, but has already been addressed in \cite{roger2006modified})
	\begin{align*}
	W'(t)^2\geq 4t^2(1+t)^2(1-t)^2\geq C_Wt^2(|t|-1)^2\geq C_W(|t|-1)_+^{\frac{n+1}{2}+\delta_0}.
	\end{align*}
Thus for $i\in\mathbf I_2$ (at least one of the bounds in $\mathbf I_1$ does not hold), we have
	\begin{align*}
	C_\omega&\leq \int_{B_{2R}(0)}(|\tilde u_i|-1)_+^{\frac{n+1}{2}+\delta_0}+\omega^{-2}\int_{B_{2R}}\tilde f_i^2\\
	&\leq C\int_{B_{2R}(0)\cap\{|\tilde u_i|\geq1\}}W'(\tilde u_i)^2+\omega^{-2}\int_{B_{2R}}\tilde f_i^2.
	\end{align*}
By elliptic estimates applied to the rescaled equation \eqref{RescaledEquationUi}, we get
	\begin{align*}
	\int_{B_{\frac{1}{2}}}|\nabla\tilde u_i|^2&\leq \tilde C\int_{B_1}\left(W'(\tilde u_i)^2+\tilde u_i^2+\tilde f_i^2\right)\\
	&\leq \tilde C\int_{B_{2R}}\left(W'(\tilde u_i)^2+\tilde f_i^2\right)+\tilde C\omega_n c_0^2C_\omega^{-1}C_\omega\\
	&\leq C\int_{B_{2R}}\left(W'(\tilde u_i)^2+\omega^{-2}\tilde f_i^2\right),
	\end{align*}
where we used $\|\tilde u_i\|_{L^\infty}\leq c_0$. Rescaling back gives
	\begin{align*}
	\int_{B_\frac{\e}{2}(x_i)}\e|\nabla u_\e|^2\leq C\int_{B_{2R\e}(x_i)}\left(\frac{W'(u_\e)^2}{\e}+\e\omega^{-2}|f_\e|^2\right).
	\end{align*}
Then summing over $i\in\mathbf I_2$ we get
	\begin{align}\label{SumI2}
	\sum_{i\in\mathbf I_2}\int_{B_\frac{\e}{2}(x_i)}\e|\nabla u_\e|^2&\leq\sum_{i\in\mathbf I_2}C\int_{B_{2R\e}(x_i)}\left(\frac{W'(u_\e)^2}{\e}+\e\omega^{-2}|f_\e|^2\right)\\\nonumber
&\leq CR^{n+1}\int_{B_{1+2R\e}(0)}\left(\frac{W'(u_\e)^2}{\e}+\e\omega^{-2}|f_\e|^2\right)\\\nonumber
&\leq C\delta^{-M}\int_{B_{1+\delta^{-M}\e}(0)}\left(\frac{W'(u_\e)^2}{\e}+\e |f_\e|^2\right),
	\end{align}
for large enough $M$ since both $R=\delta^{-p_1}$ and $\omega=\delta^{p_2}$ are fixed powers of $\delta$.
Combining \eqref{SumI1} and \eqref{SumI2} we get
	\begin{align*}
	&\int_{B_1}\left(\frac{\e|\nabla u_\e|^2}{2}-\frac{W(u_\e)}{\e}\right)_+\\
	&\leq \sum_{i\in\mathbf I_1}\int_{B_\frac{\e}{2}(x_i)}\left(\frac{\e|\nabla u_\e|^2}{2}-\frac{W( u_\e)}{\e}\right)_++\sum_{i\in\mathbf I_2}\int_{B_\frac{\e}{2}(x_i)}\frac{\e|\nabla u_\e|^2}{2}\\
	&\leq C\delta^{p_3}\int_{B_2(0)}\left(\frac{\e|\nabla u_\e|^2}{2}+\frac{W( u_\e)}{\e}\right)+C\e\int_{B_2(0)}|f_\e|^2+C\e\left(\delta+\int_{B_2(0)\cap\{|u_\e|\geq1\}}W'(u_\e)^2\right)\\
	&+C\delta^{-M}\int_{B_{1+\delta^{-M}\e}(0)}\left(\frac{W'(u_\e)^2}{\e}+\e |f_\e|^2\right)\\
	&\leq C\delta^{p_3}\int_{B_2(0)}\left(\frac{\e|\nabla u_\e|^2}{2}+\frac{W( u_\e)}{\e}\right)+C\e\delta+C\e\delta^{-M}\int_{B_{\max\{2,1+\delta^{-M}\e\}}(0)}|f_\e|^2\\
	&+C\delta^{-M}\int_{B_{\max\{2,1+\delta^{-M}\e\}}(0)}\frac{W'(u_\e)^2}{\e}.
	\end{align*}
This completes the proof for $\rho=1$ and rescaling gives the cases for other $\rho>0$.
\end{proof}
As a result of these, we have the $L^1$ convergence of the positive part of the discrepancy measure as $\e\rightarrow0$.
\begin{lemma}\label{L1PositiveDiscrepancyVanishing}
If we consider $ \xi_\e=\xi_{\e,+} - \xi_{\e,-}$ the decomposition of $\xi_\e$ into positive and negative variations then
	\begin{align*}
	\xi_{\e,+}\rightarrow0 \quad \text{ as $\e\rightarrow0$}.
	\end{align*}
Furthermore this shows $\xi\leq0$.
\end{lemma}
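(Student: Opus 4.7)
The plan is to integrate the pointwise estimate of Lemma \ref{DiscrepancyEstimateAllScale} on balls $B_\rho(x_0) \subset\subset \Omega$ and to show that a two-parameter limit---first $\e \to 0$ with $\delta \in (0,\delta_1]$ held fixed, then $\delta \to 0$---kills each of the four error terms on the right-hand side. Since $\xi_\e$ is absolutely continuous with density in $L^1_{\mathrm{loc}}$, the resulting bound $\int_K \xi_{\e,+} \to 0$ over compact $K \subset \Omega$ is exactly the $L^1_{\mathrm{loc}}$-convergence to zero claimed. The first error term, $C\delta^{p_3}\rho^{-n}\int_{B_{2\rho}} d\mu_\e$, is $O(\delta^{p_3})$ uniformly in $\e$ by the total energy bound (assumption (1), or assumption (2) combined with Theorem \ref{GradientBound}), and the last error term, $C(\e/\rho)\delta$, trivially vanishes as $\e \to 0$ with $\delta,\rho$ fixed.

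The two middle error terms require the improved regularity already in hand. For $C\delta^{-M}\e\rho^{-n}\int_{B_{\rho_0}}|f_\e|^2$, I would apply H\"older's inequality directly to condition (3) of Theorem \ref{main} with exponents $q_0/2$ and $q_0/(q_0-2)$. Setting $h = f_\e/(\e|\nabla u_\e|)$ and $d\mu_\e = \e|\nabla u_\e|^2\,dx$, condition (3) reads $\int h^{q_0}\,d\mu_\e \leq \Lambda_0$ while $\mu_\e(\Omega) \leq 2E_0$, so
\begin{align*}
\int_\Omega \frac{|f_\e|^2}{\e}\,dx = \int_\Omega h^2\,d\mu_\e \leq \Lambda_0^{2/q_0}(2E_0)^{(q_0-2)/q_0} \leq C.
\end{align*}
Hence $\int|f_\e|^2 \leq C\e$, and $\e\int|f_\e|^2 \leq C\e^2 \to 0$. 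For $C\delta^{-M}\rho^{-n}\int_{B_{\rho_0}\cap\{|u_\e|\geq 1\}}W'(u_\e)^2/\e$, I would apply Proposition \ref{prop_errorbound2} with $k \geq 2$: the first term it produces is $\leq C_k\e^2\int|f_\e|^2 \leq C_k\e^3$, and the second is $\leq C_k\e^{2k}\int W'(u_\e)^2 \leq C_k\e^{2k}$ because the $L^\infty$ bound on $u_\e$ keeps $W'(u_\e)$ pointwise bounded. Dividing by $\e$ yields $O(\e^2)$, which again vanishes as $\e \to 0$ with $\delta$ fixed.

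Combining these, for any compact $K \subset \Omega$ covered by finitely many balls $B_\rho(x_i) \subset\subset \Omega$,
\begin{align*}
\limsup_{\e \to 0}\int_K \xi_{\e,+} \leq C(K)\delta^{p_3}
\end{align*}
for every admissible $\delta$, and sending $\delta \to 0$ gives $\xi_{\e,+}(K) \to 0$. The only real bookkeeping issue---and the one point where care is required---is that the factor $\delta^{-M}$ multiplying the $\e$-decaying terms must be absorbed by true powers of $\e$, which is exactly why the limits must be taken in the order $\e \to 0$ first and $\delta \to 0$ second. Finally, the total variation $|\xi_\e|$ is controlled by the total energy $\mu_\e$, so $\xi_\e \to \xi$ weakly as Radon measures along a subsequence; combining this with $\xi_{\e,+} \to 0$ forces $\xi_{\e,-}$ to converge to some nonnegative Radon measure $\xi_-$, and hence $\xi = -\xi_- \leq 0$.
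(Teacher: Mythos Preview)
Your proposal is correct and follows essentially the same approach as the paper: apply Lemma \ref{DiscrepancyEstimateAllScale}, control $\e\int|f_\e|^2$ via H\"older against condition (3) (this is exactly the paper's equation \eqref{WillmoreTermBound}), handle the $W'(u_\e)^2/\e$ term via Proposition \ref{prop_errorbound2} with $k\geq 2$ and the $L^\infty$ bound on $u_\e$, and then send $\e\to 0$ followed by $\delta\to 0$. Your treatment is in fact slightly more explicit than the paper's about the order of limits and the covering of a general compact set, and your final weak-convergence argument for $\xi\leq 0$ fills in what the paper leaves as a one-line remark.
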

\begin{proof}
For $B_{2\rho}=B_{2\rho}(x) \subset \Omega'\subset\subset \Omega, 0 < \delta<\delta_0$ and $0<\e\leq \delta^M$ then applying Lemma \ref{DiscrepancyEstimateAllScale} we have
	\begin{align}\label{DiscrepancyEstimateAllScale2}
	\begin{split}
\int_{B_\rho}\left(\frac{\e|\nabla u_\e|^2}{2}-\frac{W(u_\e)}{\e}\right)_+&\leq C\delta^{p_3}\int_{B_{2\rho}}\left(\frac{\e|\nabla u_\e|^2}{2}+\frac{W(u_\e)}{\e}\right)+C\delta^{-M}\e\int_{B_\rho}|f_\e|^2\\
	&\quad +C\delta^{-M}\int_{B_\rho\cap\{|u_\e|\geq1\}}\frac{W'(u_\e)^2}{\e}+C\left(\frac{\e}{\rho}\right)\delta \rho^n.
\end{split}
	\end{align}
Proposition \ref{prop_errorbound2} gives us
	\begin{align*}
	\int_{\{|u_\e|\geq 1\} \cap B_\rho} W'(u_\e)^2 \leq C_k( 1 + \rho^{-2k}\e^{2k}) \e^2 \int_{B_{2\rho}} |f_\e|^2 + C_k \rho^{-2k} \e^{2k} \int_{\{|u_\e| \geq 1\} \cap B_{2\rho}} W'(u_\e)^2
	\end{align*}
for all $ k \in \mathbb N_0$. Choosing $k=2$ and applying the bound
	\begin{align*}
	\int_{\{|u_\e| \geq 1\} \cap B_{2\rho}} W'(u_\e)^2 \leq C(\Omega')
	\end{align*}
and inserting these estimates into \eqref{DiscrepancyEstimateAllScale2}, we get
	\begin{align*}
	\int_{B_\rho}\left(\frac{\e|\nabla u_\e|^2}{2}-\frac{W(u_\e)}{\e}\right)_+&\leq C\delta^{p_3}\int_{B_{2\rho}}\left(\frac{\e|\nabla u_\e|^2}{2}+\frac{W(u_\e)}{\e}\right)+C(\delta^{-M}\e+\e^2)\int_{B_\rho}|f_\e|^2\\
	&\quad +C\delta^{-M}\e^3+C\left(\frac{\e}{\rho}\right)\delta \rho^n.
	\end{align*}
By the H\"older inequality with exponent $q_0/2$, we estimate
	\begin{align}\label{WillmoreTermBound}
	\e \int_{B_{r/2}} |f_\e|^2&= \e^2 \int_{B_{r/2}}\left(\frac{f_\e}{|\e |\nabla u_\e|} \right)^2 \e |\nabla u_\e|^2\\\nonumber
&\leq \e^2 \left(\int_{B_{r/2}}\left|\frac{f_\e}{|\e |\nabla u_\e|} \right|^{q_0} \e |\nabla u_\e|^2\right)^{2/q_0}\left( \int_{B_{r/2}} \e |\nabla u_\e|^2 \right)^\frac{q_0}{q_0-2}\\\nonumber
&\leq \e^2 C(\Lambda_0, E_0),
	\end{align}
and obtain
	\begin{align*}
	\int_{B_\rho}\left(\frac{\e|\nabla u_\e|^2}{2}-\frac{W(u_\e)}{\e}\right)_+&\leq \tilde C\delta^{p_3}+\tilde C\delta^{-M}\e^2+\tilde C\e^2 +\tilde C\delta^{-M}\e^3+\tilde C\delta\e\\
	&\leq \tilde C \delta.
	\end{align*}
Letting $\e\rightarrow0$ we get $\xi_{\e,+}(B_\rho)\rightarrow 0$.
\end{proof}
\section{Rectifiability}\label{sec:rectifiability}
We will proceed by proving upper and lower density bounds for the energy measure. Combining the estimates obtained in the previous section, we get an upper bound on the density ratio of the limit energy measure.
\begin{theorem}\label{DensityUpperBound}
If we consider $\Omega'\subset\subset\Omega$ and $r_0(\Omega'):=\min\left\{1,\frac{d(\Omega',\partial\Omega)}{2}\right\} $ then for all $x_0\in\Omega', 0<r<r_0$ there exists a function $\phi(\e)$ with $\lim_{\e \rightarrow 0} \phi(\e)=0$ such that
	\begin{align}\label{eqn_eDensityUpperBound}
	r^{-n}\mu_\e(B_r(x_0))\leq C(\Lambda_0,\Omega') + \frac{\phi(\e)}{r^n}.
	\end{align}
Letting $\e\rightarrow 0$ we get
	\begin{align*}
	r^{-n}\mu(B_r(x_0))\leq C(\Lambda_0,\Omega'),
	\end{align*}
where $\mu=\lim_{\e\rightarrow 0} \mu_\e$ is the weak-* limit of $\mu_\e=\left(\frac{\e|\nabla u_\e|^2}{2}+\frac{W(u_\e)}{\e}\right)dx$ in the sense of Radon measures.
\end{theorem}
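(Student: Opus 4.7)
The starting point is to integrate the almost monotonicity formula \eqref{AlmostMonotonicity} from $r$ to $r_0$, drop the non-negative spherical term $(\e/r^{n+2})\int_{\partial B_r}\langle x,\nabla u_\e\rangle^2$, and use $\xi(B_s) \leq \xi_+(B_s)$ to obtain
\begin{align*}
M_\e(r) := \frac{\mu_\e(B_r)}{r^n} \leq \frac{\mu_\e(B_{r_0})}{r_0^n} + \int_r^{r_0}\frac{\xi_+(B_s)}{s^{n+1}}\,ds + \int_r^{r_0}\frac{1}{s^{n+1}}\left|\int_{B_s}\langle x,\nabla u_\e\rangle f_\e\right|ds.
\end{align*}
The boundary term is bounded by $E_0/r_0^n \leq C(\Omega')$ via condition (1) of Theorem \ref{main}. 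For the $f_\e$ term, the inequality $|\langle x,\nabla u_\e\rangle| \leq s|\nabla u_\e|$ on $B_s$ combined with H\"older's inequality with exponents $q_0,\,q_0/(q_0-1)$ applied against the measure $\e|\nabla u_\e|^2\,dx$ and the $L^{q_0}$ hypothesis \eqref{IntegralBound} yields $\bigl|\int_{B_s}\langle x,\nabla u_\e\rangle f_\e\bigr| \leq s\Lambda_0^{1/q_0}(2\mu_\e(B_s))^{1-1/q_0}$; since $q_0 > n$ makes $s^{-n/q_0}$ integrable near $0$, this term contributes at most $CF(r)^{1-1/q_0}$, where $F(r) := \sup_{s\in[r,r_0]} M_\e(s)$.

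The main work is the $\xi_+$ integral, where I would apply Lemma \ref{DiscrepancyEstimateAllScale} at each scale $s$ with the coupled choice $\delta = \e^{1/(2M)}$, so that both $\delta \to 0$ and $\delta^{-M}\e = \e^{1/2} \to 0$. The error terms of that lemma become $o(1)$: indeed $\e\int|f_\e|^2 \leq C\e^2$ by the H\"older trick of \eqref{WillmoreTermBound} and $\int_{\{|u_\e|\geq 1\}}W'(u_\e)^2 \leq C\e^2$ by Proposition \ref{prop_errorbound2} applied with $k$ sufficiently large, so after integration the only surviving multiplicative contribution $C\delta^{p_3}\mu_\e(B_{2s})/s$ produces
\begin{align*}
\int_r^{r_0}\frac{\xi_+(B_s)}{s^{n+1}}\,ds \leq C\e^{p_3/(2M)}F(r)\log(r_0/r) + \frac{\phi(\e)}{r^n}
\end{align*}
for some $\phi(\e) \to 0$.

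Collecting the three estimates,
\begin{align*}
F(r) \leq C + C\e^{p_3/(2M)}\log(r_0/r)\,F(r) + \frac{\phi(\e)}{r^n} + CF(r)^{1-1/q_0}.
\end{align*}
Setting $r_\e := r_0\exp\bigl(-\e^{-p_3/(2M)}/(4C)\bigr)$, for $r \in [r_\e, r_0]$ the coefficient of $F(r)$ on the right is at most $1/4$ and can be absorbed, after which Young's inequality with exponents $q_0/(q_0-1),\,q_0$ absorbs the sublinear term, yielding $F(r) \leq C + C\phi(\e)/r^n$. Since $\e^{-p_3/(2M)}$ grows polynomially in $\e^{-1}$ while $\log(r_0/\e)$ grows only logarithmically, one has $r_\e \leq \e$ for all sufficiently small $\e$, so for $r \leq r_\e$ the pointwise gradient bound \eqref{GradBound} gives $\mu_\e(B_r) \leq C\omega_{n+1} r^{n+1}/\e$ and hence $M_\e(r) \leq Cr/\e \leq C$ automatically. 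This establishes \eqref{eqn_eDensityUpperBound}, and the limit statement follows from the weak-$*$ convergence $\mu_\e \rightharpoonup \mu$ evaluated at continuity radii and monotonicity in $r$. The principal obstacle is the logarithmic factor $\log(r_0/r)$ generated by the scale coupling $\xi_+(B_s) \lesssim \delta^{p_3}\mu_\e(B_{2s})$ in Lemma \ref{DiscrepancyEstimateAllScale}; the resolution is that the polynomial-in-$\e$ choice of $\delta$ defeats this logarithm on all relevant scales, with the tiny threshold range $r \leq r_\e$ sitting below the pointwise-gradient-bounded scale $\e$ where the a priori estimate takes over.
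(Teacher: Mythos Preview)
Your route diverges substantially from the paper's and is both more elaborate and, as written, incomplete. The paper handles the $f_\e$ term with the same H\"older estimate you use but absorbs it multiplicatively via the integrating factor $\exp\bigl(\tfrac{q_0}{q_0-n}C(\Lambda_0)\rho^{1-n/q_0}\bigr)$ rather than a posteriori with Young's inequality. For the discrepancy it does \emph{not} apply Lemma~\ref{DiscrepancyEstimateAllScale} scale by scale; it simply uses the crude monotonicity $\xi_{\e,+}(B_\rho)\leq\xi_{\e,+}(B_{r_0})$ for every $\rho\leq r_0$, so that
\[
\int_r^{r_0}\rho^{-n-1}\xi_{\e,+}(B_\rho)\,d\rho\leq C\,\xi_{\e,+}(B_{r_0})\,r^{-n},
\]
and then invokes Lemma~\ref{L1PositiveDiscrepancyVanishing} to set $\phi(\e):=C\,\xi_{\e,+}(B_{r_0})\to 0$. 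No logarithmic factor ever appears, no threshold $r_\e$ is needed, and the bound is uniform over all $0<r<r_0$ and all $q_0>n$. Your scale-coupling via Lemma~\ref{DiscrepancyEstimateAllScale} essentially re-proves part of Lemma~\ref{L1PositiveDiscrepancyVanishing} inside the monotonicity argument, which is what manufactures the logarithm you then work to defeat.

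There is a genuine gap in your small-$r$ endgame. You invoke the pointwise gradient bound \eqref{GradBound} to get $\mu_\e(B_r)\leq C\omega_{n+1}r^{n+1}/\e$ for $r\leq r_\e$, but Theorem~\ref{GradientBound} provides \eqref{GradBound} only when $q_0>n+1$; for $n<q_0\leq n+1$ one has merely the H\"older estimate \eqref{HolderBound}, and your pointwise bound on $\mu_\e(B_r)$ is unavailable. A related issue: Lemma~\ref{DiscrepancyEstimateAllScale} requires $\e\leq\rho$, so your estimate of $\int_r^{r_0}\rho^{-n-1}\xi_+(B_\rho)\,d\rho$ is only justified on $s\geq\e$, and the interval $r\in[r_\e,\e)$ is covered neither by your main absorption argument nor by your gradient-bound fallback as written. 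Both issues are repairable---the $L^2$ bound in the Claim within the proof of Theorem~\ref{GradientBound} gives $\mu_\e(B_\e)\leq C\e^n$ uniformly in $q_0>n$, and one can fold $C\e^n$ into $\phi(\e)$ to cover $r<\e$---but the paper's approach sidesteps the whole difficulty.
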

\begin{proof} For the sake of simplicity we set $ x_0=0$ and set $ B_\rho(0)=B_\rho$.
By the almost monotonicity formula \eqref{AlmostMonotonicity}, Lemma \ref{L1PositiveDiscrepancyVanishing} and Holder's inequality
	\begin{align}\label{MonotonicityWithLpMeanCurvature}
	&\frac{d}{d\rho}\left(\frac{\mu_\e(B_\rho)}{\rho^n}\right)=-\frac{1}{\rho^{n+1}}\xi_\e(B_{\rho})+\frac{\e}{\rho^{n+2}}\int_{\partial B_\rho}\langle x,\nabla u\rangle^2-\frac{1}{\rho^{n+1}}\int_{B_\rho}\langle x,\nabla u\rangle f_\e.
	\end{align}
We estimate the last term above as follows
	\begin{align}\label{ConicalityTermBound}
	\frac{1}{\rho^{n+1}}\left| \int_{B_\rho}\langle x,\nabla u\rangle f_\e\right|&\leq \frac{1}{\rho^{n+1}}\int_{B_{\rho}}|\langle x,\nabla u\rangle|\left|\frac{f_\e}{\e|\nabla u|}\right|\e|\nabla u|\\\nonumber
&\leq \frac{1}{\rho^n}\int_{B_{\rho}}\left|\frac{f_\e}{\e|\nabla u|}\right|\e|\nabla u|^2\\\nonumber
&\leq\frac{1}{\rho^n}\left(\int_{B_{\rho}}\left|\frac{f_\e}{\e|\nabla u|}\right|^{q_0}\e|\nabla u|^2\right)^\frac{1}{q_0}\left(\int_{B_\rho}\e|\nabla u|^2\right)^\frac{q_0-1}{q_0}\\\nonumber
&\leq \left(\frac{1}{\rho^n}\right)^\frac{1}{q_0}\left(\int_{B_{\rho}}\left|\frac{f_\e}{\e|\nabla u|}\right|^{q_0}\e|\nabla u|^2\right)^\frac{1}{q_0}\left[\frac{1}{\rho^n}\right]^\frac{q_0-1}{q_0}\left(2\mu_\e(B_\rho)\right)^\frac{q_0-1}{q_0}\\\nonumber
&\leq C(\Lambda_0)\rho^{-\frac{n}{q_0}}\left(\frac{\mu_\e(B_\rho)}{\rho^n}\right)^\frac{q_0-1}{q_0}\\\nonumber
&\leq C(\Lambda_0)\rho^{-\frac{n}{q_0}}\left(1+\frac{\mu_\e(B_\rho)}{\rho^n}\right)
	\end{align}
where we used the inequality $a^{1-\frac{1}{q_0}} \leq 1+a$ which holds for all $ a \geq 0$.
Inserting this inequality into \eqref{MonotonicityWithLpMeanCurvature} and discarding the positive second term on the right had side, we get
	\begin{align}\label{MonotonicityWithLpMeanCurvature2}
	\frac{d}{d\rho}\left(1+\frac{\mu_\e(B_\rho)}{\rho^n}\right)&=\frac{d}{d\rho}\left(\frac{\mu_\e(B_\rho)}{\rho^n}\right)\geq -\frac{1}{\rho^{n+1}}\xi_\e(B_{\rho})- C(\Lambda_0)\rho^{-\frac{n}{q_0}}\left(1+\frac{\mu_\e(B_\rho)}{\rho^n}\right).
	\end{align}
 Multiplying both sides by $ \exp\left( \int C(\Lambda_0)\rho^{-\frac{n}{q_0}} d\rho\right)= \exp\left(\frac{q_0}{q_0-n} C(\Lambda_0)\rho^{1-\frac{n}{q_0}}\right) $ we have
	\begin{align*}
	\frac{d}{d\rho}\left[ \exp\left(\frac{q_0}{q_0-n} C(\Lambda_0)\rho^{1-\frac{n}{q_0}}\right) \left(1+\frac{\mu_\e(B_\rho)}{\rho^n}\right)\right]\geq - \exp\left(\frac{q_0}{q_0-n} C(\Lambda_0)\rho^{1-\frac{n}{q_0}}\right) \frac{\xi_\e(B_{\rho})}{\rho^{n+1}}.
	\end{align*}
Integrating from $r$ to $r_0$ gives
	\begin{align*}
	&\exp\left(\frac{q_0}{q_0-n} C(\Lambda_0)r_0^{1-\frac{n}{q_0}}\right) \left(1+\frac{\mu_\e(B_{r_0})}{r_0^n}\right)- \exp\left(\frac{q_0}{q_0-n} C(\Lambda_0)r^{1-\frac{n}{q_0}}\right) \left(1+\frac{\mu_\e(B_r)}{r^n}\right)\\
	&\geq -\int_r^{r_0} \exp\left(\frac{q_0}{q_0-n} C(\Lambda_0)\rho^{1-\frac{n}{q_0}}\right) \frac{\xi_{\e,+}(B_{\rho})}{\rho^{n+1}}\\
	&\geq -\exp\left(\frac{q_0}{q_0-n} C(\Lambda_0)r_0^{1-\frac{n}{q_0}}\right)\int_r^{r_0} \frac{\xi_{\e,+}(B_{\rho})}{\rho^{n+1}}.
	\end{align*}
Namely
	\begin{align}\label{AlmostMonotonicityWithExponentialConstant}
	\exp\left(\frac{q_0}{q_0-n} C(\Lambda_0)r_0^{1-\frac{n}{q_0}}\right) \left(1+\frac{\mu_\e(B_{r_0})}{r_0^n}\right)-\frac{\mu_\e(B_r)}{r^n}&\geq- C(\Lambda_0,\Omega')\int_r^{r_0} \frac{\xi_{\e,+}(B_{\rho})}{\rho^{n+1}}\\\nonumber
&\geq- C(\Lambda_0,\Omega')\int_r^{r_0} \frac{\xi_{\e,+}(B_{r_0})}{\rho^{n+1}},
	\end{align}
where we used $\exp\left(\frac{q_0}{q_0-n} C(\Lambda_0)r^{1-\frac{n}{q_0}}\right)>1$ for $r>0$.
Passing to the limit as $\e\rightarrow0$ and using Lemma \ref{L1PositiveDiscrepancyVanishing}, we have
	\begin{align*}
	\frac{\mu(B_r)}{r^n}\leq C(\Lambda_0,\Omega',n,q_0).
	\end{align*}
					\end{proof}
Next, we obtain estimates of the discrepancy measure for each $\e$.
\begin{proposition}\label{prop_DiscrepancyEnergyRatioLowerBound}
Let $\delta=\rho^\gamma,\e\leq\rho\leq r$ for $0<\gamma<\frac{1}{M}\leq\frac{1}{2}$, we have $\delta^{-M}\e\leq\rho^{1-M\gamma}\leq1$. For $ B_{3\rho^{1-\beta}}(x)\subset\subset\Omega$, we have
	\begin{align}\label{eqn_DiscrepancyUpperBound}
	\begin{split}
\rho^{-n-1} \xi_{\e,+}(B_\rho(x))&\leq C\rho^{p_3\gamma-n-1}\mu_\e(B_{2\rho}(x))\\
	&+\tilde C_k\e\rho^{-M\gamma-n-1}\int_{B_{3\rho^{1-\beta}}(x)}|f_\e|^2+\tilde C_\beta\e\rho^{\gamma-2}\left(1+\int_{\{|u_\e| \geq 1\} \cap B_{3r^{1-\beta}}(x)} W'(u_\e)^2\right).
\end{split}
	\end{align}
	\end{proposition}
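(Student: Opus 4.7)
The inequality is essentially a rescaled form of Lemma \ref{DiscrepancyEstimateAllScale} in which the residual $W'(u_\e)^2$ term has been further absorbed using Proposition \ref{prop_errorbound2}. The plan is as follows.

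First I would apply Lemma \ref{DiscrepancyEstimateAllScale} with the choice $\delta=\rho^{\gamma}$. The hypothesis $\gamma<\tfrac{1}{M}$ together with $\e\leq\rho$ yields $\delta^{-M}\e=\rho^{-M\gamma}\e\leq \rho^{1-M\gamma}\leq 1$, so the outer radius in that lemma satisfies $\rho_0=\max\{2,1+\delta^{-M}\e\}\rho\leq 2\rho$. After dividing the resulting inequality by $\rho$, I obtain
\begin{align*}
\rho^{-n-1}\xi_{\e,+}(B_\rho(x)) &\leq C\rho^{p_3\gamma-n-1}\mu_\e(B_{2\rho}(x))+C\e\rho^{-M\gamma-n-1}\int_{B_{2\rho}(x)}|f_\e|^2 \\
&\quad +C\rho^{-M\gamma-n-1}\int_{\{|u_\e|\geq 1\}\cap B_{2\rho}(x)}\frac{W'(u_\e)^2}{\e}+C\e\rho^{\gamma-2}.
\end{align*}
The first, second, and fourth terms here already match the shape of the desired bound up to absolute constants; it remains to dispose of the third term.

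For this I would apply Proposition \ref{prop_errorbound2} with $\Omega'=B_{2\rho}(x)$ and $\Omega=B_{3\rho^{1-\beta}}(x)$, admissible by the hypothesis $B_{3\rho^{1-\beta}}(x)\subset\subset\Omega$, noting that for $\rho$ small one has $d(\Omega',\partial\Omega)\geq\tfrac12\rho^{1-\beta}$. With a yet-to-be-chosen integer $k$, this gives
\begin{align*}
\int_{\{|u_\e|\geq 1\}\cap B_{2\rho}}\!\!W'(u_\e)^2 &\leq C_k(1+\rho^{-2k(1-\beta)}\e^{2k})\e^2\!\int_{B_{3\rho^{1-\beta}}}\!\!|f_\e|^2 \\
&\quad +C_k\rho^{-2k(1-\beta)}\e^{2k}\!\int_{\{|u_\e|\geq 1\}\cap B_{3\rho^{1-\beta}}}\!\!W'(u_\e)^2.
\end{align*}
Substituting into the displayed inequality, the two $|f_\e|^2$ contributions aggregate into a single expression of the form $\tilde C_k\e\rho^{-M\gamma-n-1}\int_{B_{3\rho^{1-\beta}}}|f_\e|^2$ (the $\e^{2k+1}\rho^{-2k(1-\beta)}$ factor from the absorbed piece being dominated by $\e$ thanks to $\e\leq\rho$ and $k$ large). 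The only remaining object is the residual $W'^{\,2}$ integral over the larger ball, now carrying the coefficient $C_k\rho^{-M\gamma-n-1-2k(1-\beta)}\e^{2k-1}$.

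The final task is to choose $k=k(\beta,\gamma,M,n)$ large enough that this coefficient is dominated by $\tilde C_\beta\,\e\,\rho^{\gamma-2}$. Using the cheap bound $\e^{2k-2}\leq\rho^{2k-2}$, it suffices to require $2k\beta\geq M\gamma+n+\gamma+1$, which is achievable because $\beta>0$. The constant produced by this selection is the $\tilde C_\beta$ in the statement, and the extra ``$1$'' inside the parentheses absorbs the ``$1$'' appearing in the Proposition \ref{prop_errorbound2} estimate. I expect the only real difficulty to be the exponent bookkeeping: every negative power of $\rho$ produced by $\delta^{-M}$ and by $r^{-2k}$ must be matched against a positive power of $\e\leq\rho$, and this is precisely where the hypothesis $\gamma<\tfrac{1}{M}$ and the freedom to enlarge $k$ at the cost of a $\beta$-dependent constant are essential.
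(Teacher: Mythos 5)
Your proposal is correct and follows essentially the same route as the paper: apply Lemma \ref{DiscrepancyEstimateAllScale} with $\delta=\rho^\gamma$ (using $\gamma<1/M$ and $\e\leq\rho$ to force $\rho_0\leq 2\rho$), then absorb the residual $W'(u_\e)^2$ term via Proposition \ref{prop_errorbound2} applied on the pair $B_{2\rho}\subset\subset B_{3\rho^{1-\beta}}$, and finally choose $k$ large depending on $\beta,\gamma,M,n$ so that the coefficient $\e^{2k-1}\rho^{-M\gamma-n-1-2k(1-\beta)}$ is controlled by $\e\rho^{\gamma-2}$ using $\e\leq\rho$. Your exponent bookkeeping, including the sufficient condition $2k\beta\geq M\gamma+n+\gamma+1$, agrees with what the paper intends (the paper's stated inequality for $k$ contains a small sign slip but the same idea), so there is nothing substantive to flag.
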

\begin{proof}
For $0<\gamma<\frac{1}{M}\leq\frac{1}{2}$, by choosing $ \delta^{-M}\e \leq \rho^{1-M\gamma}\leq 1$ we get $ \max \{2, 1+ \delta^{-M}\e\} =2$.
Therefore substituting $\delta = \rho^\gamma$ into Lemma \ref{DiscrepancyEstimateAllScale} we have
	\begin{align*}
	\begin{split}
\rho^{-n-1}\xi_{\e,+}(B_\rho)&= \rho^{-n-1}\int_{B_\rho(x)}\left(\frac{\e|\nabla u_\e|^2}{2}-\frac{W(u_\e)}{\e}\right)_+\\\nonumber
&\leq C\rho^{p_3\gamma-n-1}\int_{B_{2\rho}(x)}\left(\frac{\e|\nabla u_\e|^2}{2}+\frac{W(u_\e)}{\e}\right)+C\e\rho^{-M\gamma-n-1}\int_{B_{2\rho}(x)}|f_\e|^2\\
	&+C\e^{-1}\rho^{-M\gamma-n-1}\int_{B_{2\rho}(x)\cap\{|u_\e|\geq1\}}W'(u_\e)^2+C\e\rho^{\gamma-2}.
\end{split}
	\end{align*}
On the other hand we have by Proposition \ref{prop_errorbound2} with $r:=d(B_{2\rho}(x),\partial B_{3\rho^{1-\beta}}(x))=3 \rho^{1-\beta} - 2 \rho \geq\rho^{1-\beta}$
	\begin{align*}
	\int_{\{|u_\e|\geq 1\} \cap B_{2\rho}} W'(u_\e)^2 \leq C_k( 1 + \rho^{-2k(1-\beta)}\e^{2k}) \e^2 \int_{B_{3\rho^{1-\beta}} } |f_\e|^2 + C_k \rho^{-2k(1-\beta)} \e^{2k} \int_{\{|u_\e| \geq 1\} \cap B_{3\rho^{1-\beta}}} W'(u_\e)^2.
	\end{align*}
Substituting this into our above estimate, we get
	\begin{align*}
	\rho^{-n-1}\xi_{\e,+}(B_\rho)&\leq C\rho^{p_3\gamma-n-1}\int_{B_{2\rho}(x)}\left(\frac{\e|\nabla u_\e|^2}{2}+\frac{W(u_\e)}{\e}\right)+\tilde C_k\e\rho^{-M\gamma-n-1}\int_{B_{3\rho^{1-\beta}}(x)}|f_\e|^2\\\nonumber
&+C\e^{-1}\rho^{-M\gamma-n-1}\tilde C_k\rho^{2k\beta-2k}\e^{2k} \int_{\{|u_\e| \geq 1\} \cap B_{3\rho^{1-\beta}}(x)} W'(u_\e)^2+C\e\rho^{\gamma-2}\\\nonumber
&\leq C\rho^{p_3\gamma-n-1}\mu_\e(B_{2\rho}(x))+\tilde C_k\e\rho^{-M\gamma-n-1}\int_{B_{3\rho^{1-\beta}}(x)}|f_\e|^2\\
	&+C\left(\e\rho^{\gamma-2}+\e^{-1}\rho^{-M\gamma-n-1}\e^{2k\beta}\int_{\{|u_\e| \geq 1\} \cap B_{3\rho^{1-\beta}}(x)} W'(u_\e)^2\right)\\\nonumber
&\leq C\rho^{p_3\gamma-n-1}\mu_\e(B_{2\rho}(x))+\tilde C_{k,\beta}\e\rho^{-M\gamma-n-1}\int_{B_{3\rho^{1-\beta}}(x)}|f_\e|^2\\
	&+\tilde C_\beta\e\rho^{\gamma-2}\left(1+\int_{\{|u_\e| \geq 1\} \cap B_{3\rho^{1-\beta}}(x)} W'(u_\e)^2\right),
	\end{align*}
where we have chosen $-M\gamma-n+2k\beta +1 \geq \gamma -2$ or $k>\frac{\gamma-2+M\gamma+n+1}{2\beta}$ sufficiently large.
\end{proof}
In the following theorem we prove the density lower bound for the limit measure.
\begin{theorem}\label{DensityLowerBound}
There exists $\bar\theta>0$ such that for any $\Omega'\subset\subset\Omega$ and $r_1(\Omega')\leq \frac{d(\Omega',\partial\Omega)}{2}$ sufficiently small, we have
	\begin{align*}
	r^{-n}\mu(B_r(x))\geq\bar\theta-Cr^\gamma,
	\end{align*}
for some $\gamma>0$, and all $x\in\spt\mu\cap\Omega'$ and $0<r\leq r_1$.
In particular,
	\begin{align*}
	\theta^n_*(\mu)\geq\frac{\bar\theta}{\omega_n}
	\end{align*}
for $\mu$-a.e. in $\Omega$.
\end{theorem}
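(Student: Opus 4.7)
The plan is to argue by contrapositive, establishing a clearing-out type statement: there is a universal constant $\bar\theta>0$ so that if the density ratio at $x_0$ at scale $r$ drops below $\bar\theta - Cr^\gamma$, then $x_0\notin\spt\mu$. This is the standard pattern for density lower bounds in diffuse interface settings, and the tools assembled in Section \ref{sec_Discrepancy} are precisely what is needed to execute it in the inhomogeneous $L^{q_0}$ regime.

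First, given $x_0\in\spt\mu\cap\Omega'$, I will locate a ``transition point'' near $x_0$. More precisely, for each small $\e>0$ and for $r\le r_1$ I claim there exists $y_\e\in B_{r/4}(x_0)$ with $|u_\e(y_\e)|\le 1-\tau_0$, where $\tau_0$ is a fixed universal constant. If no such point existed along some sequence $\e_k\to 0$, then $|u_{\e_k}|\ge 1-\tau_0$ throughout $B_{r/4}(x_0)$, and Proposition \ref{prop_errorterms} together with Proposition \ref{prop_errorbound2} and the improved integrability \eqref{fLpBound} of $f_\e$ from Theorem \ref{GradientBound} would force $\mu_{\e_k}(B_{r/8}(x_0))\to 0$, contradicting $x_0\in\spt\mu$ via weak convergence.

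Next, I upgrade the transition point to a quantitative lower bound at the microscopic scale. Using the rescaled gradient/Hölder bounds \eqref{GradBound}–\eqref{HolderBound} of Theorem \ref{GradientBound} applied to $\tilde u(x)=u_\e(y_\e+\e x)$, the rescaled function stays bounded away from $\pm 1$ on a universal ball, so by a direct computation of $\int W(\tilde u)$ on that ball I obtain
\begin{align*}
\mu_\e(B_{c\e}(y_\e)) \ge c_1\,\e^n,
\end{align*}
for universal constants $c,c_1>0$. In particular, $(c\e)^{-n}\mu_\e(B_{c\e}(y_\e))\ge c_1/c^n$.

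Third, I propagate this lower bound from scale $\e$ up to scale $r$ using the almost monotonicity formula \eqref{AlmostMonotonicityWithExponentialConstant}, centered at $y_\e$. Because the conical term is nonnegative and $-\xi/\rho^{n+1}\ge -\xi_+/\rho^{n+1}$, the same Gronwall argument used in the proof of Theorem \ref{DensityUpperBound} yields a \emph{two-sided} comparison of density ratios with an additive error controlled by the exponential factor $\exp(C r^{1-n/q_0})$ and by $\int_{c\e}^{r}\xi_{\e,+}(B_\rho(y_\e))\rho^{-n-1}d\rho$. The point is that lower bounds also propagate upward from small to large scales modulo these errors. Thus
\begin{align*}
\frac{\mu_\e(B_{r}(y_\e))}{r^n}\;\ge\; e^{-Cr^{1-n/q_0}}\,\frac{c_1}{c^n}\;-\;C\int_{c\e}^{r}\frac{\xi_{\e,+}(B_\rho(y_\e))}{\rho^{n+1}}\,d\rho\;-\;1 + e^{-Cr^{1-n/q_0}}.
\end{align*}
Since $B_r(y_\e)\subset B_{2r}(x_0)$ (after possibly shrinking the $r/4$ in Step~1 or relabelling $r$), the same bound holds with center $x_0$ and a doubled radius.

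Finally, I bound the discrepancy integral. Applying Proposition \ref{prop_DiscrepancyEnergyRatioLowerBound} with $\delta=\rho^\gamma$ on a dyadic decomposition of $(c\e,r)$ and using the already-established upper density bound of Theorem \ref{DensityUpperBound} to absorb the first right-hand term, while the $f_\e$ and $W'(u_\e)^2$ terms are controlled by \eqref{WillmoreTermBound} and Proposition \ref{prop_errorbound2} respectively, I obtain
\begin{align*}
\int_{c\e}^{r}\frac{\xi_{\e,+}(B_\rho(y_\e))}{\rho^{n+1}}\,d\rho \;\le\; C\,r^\gamma \;+\; o_\e(1),
\end{align*}
for some $\gamma>0$. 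Sending $\e\to 0$ and using the lower semicontinuity of the energy measure under weak convergence delivers $r^{-n}\mu(B_{r}(x_0))\ge \bar\theta - Cr^\gamma$ with $\bar\theta= c_1/(2^n c^n)$. The density statement $\theta^n_*(\mu)\ge \bar\theta/\omega_n$ then follows by dividing by $\omega_n$ and letting $r\to 0$.

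The main obstacle is the fourth step: making the error coming from $\int\xi_{\e,+}/\rho^{n+1}$ genuinely $O(r^\gamma)$ uniformly in $\e$. Proposition \ref{prop_DiscrepancyEnergyRatioLowerBound} gives a family of estimates parametrised by $\gamma\in(0,1/M)$ and $\beta$, and one must choose these exponents so that the $\rho^{p_3\gamma-n-1}\mu_\e(B_{2\rho})$ contribution is $\rho^{p_3\gamma-1}$ (integrable at $0$), while simultaneously controlling the $\e\rho^{-M\gamma-n-1}\int|f_\e|^2$ and the excess $W'(u_\e)^2$ terms uniformly in $\e$ via \eqref{WillmoreTermBound} and Proposition \ref{prop_errorbound2}. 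Balancing these exponents to produce a single $\gamma>0$ that works for all small $r$ is the delicate bookkeeping step.
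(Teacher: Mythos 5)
Your proposal captures the right overall shape (locate a transition point, get a microscopic density lower bound there, propagate it up to scale $r$ via the almost-monotonicity inequality, and control the accumulated discrepancy), but there is a genuine gap in Step 4, and it is not a matter of ``delicate bookkeeping.''

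The term from Proposition \ref{prop_DiscrepancyEnergyRatioLowerBound} that must be controlled when integrating the monotonicity formula from scale $c\e$ up to $r$ is
\begin{align*}
\int_{c\e}^{r}\tilde C_\beta\,\e\,\rho^{-M\gamma-n-1}\Bigl(\int_{B_{3\rho^{1-\beta}}(y_\e)}|f_\e|^2\Bigr)\,d\rho.
\end{align*}
If you simply plug in the uniform bound $\int|f_\e|^2\le C\e$ from \eqref{WillmoreTermBound}, as you propose, you obtain $C\e^2\int_{c\e}^r\rho^{-M\gamma-n-1}d\rho\sim C\e^{2-M\gamma-n}$, which diverges as $\e\to0$ for any $n\ge 2$ and any $\gamma>0$. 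No choice of the exponents $\gamma,\beta$ fixes this, because the singularity of $\rho^{-M\gamma-n-1}$ at $\rho\sim\e$ is not integrable against the $\e^2$ prefactor. So the term cannot be bounded uniformly over all transition points $y_\e$.

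The missing idea — and the crux of the paper's proof — is a weak-$L^1$ (maximal-function) selection argument: define the convolution $\omega_{\e,\rho}(x)=\rho^{-n-1}\int_{B_\rho(x)}\e^{-1}|f_\e|^2$ and its integral $\omega_\e=\int_0^{r_1}\omega_{\e,\rho}\,d\rho$, observe that $\|\omega_\e\|_{L^1}$ is bounded uniformly in $\e$, and then use the weak-$L^1$ inequality to show that the set $\{x\in B_{r/2}:\int_{c\e}^{r/4}\tilde C_\beta\,\e\,\rho^{-M\gamma-n-1}\int_{B_{3\rho^{1-\beta}}(x)}|f_\e|^2\,d\rho\ge\bar\theta_0\}$ has $\mathcal L^{n+1}$-measure $O(\e^{2-(M\gamma+n\beta)})\to0$. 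Separately, the set of transition points $\{x\in B_{r/2}:|u_\e(x)|\le1-\tau\}$ has $\mathcal L^{n+1}$-measure bounded below by $C\e$ (from \eqref{MeasureLowerBound}, which in turn uses $0\in\spt\mu$). Choosing $M\gamma+n\beta<\frac12$ ensures the bad set is asymptotically much smaller, so one can pick a single point $x$ that is both a transition point and a ``good'' point for the $f$-integral. Only at that chosen point does the monotonicity propagation close. In short: the $f$-error can only be controlled after selecting the base point, not uniformly; you need to build that selection into Step 1, not bolt on a uniform bound in Step 4.
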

\begin{proof}
Without loss of generality, we assume $0\in\spt\mu\cap\Omega'$ and want to prove a density lower bound at $0$.
We first integrate \eqref{MonotonicityWithLpMeanCurvature2} from $s$ to $r$. 	\begin{align}\label{IntegralMonotonicitywithL2MeanCurvature}
	\begin{split}
&\frac{\mu_\e(B_r(x))}{r^n}-\frac{\mu_\e(B_s(x))}{s^n}\\
	&\geq-\int_s^r\frac{1}{\rho^{n+1}}\xi_{\e,+}(B_{\rho}(x))d\rho-\int_s^rC(\Lambda_0)\rho^{-\frac{n}{q_0}}\left(\frac{\mu_\e(B_\rho(x))}{\rho^n}\right)^\frac{q_0-1}{q_0}.
\end{split}
	\end{align}
By \eqref{eqn_DiscrepancyUpperBound} in Proposition \ref{prop_DiscrepancyEnergyRatioLowerBound}, the discrepancy term
	\begin{align}\label{eqn_IntegralNegativeDiscrepancyLowerBound}
	\begin{split}
-\int_s^r \rho^{-n-1} \xi_{\e,+}(B_\rho(x))&\geq -\int_s^rC\rho^{p_3\gamma-n-1}\mu_\e(B_{2\rho}(x))-\int_s^r\tilde C_k\e\rho^{-M\gamma-n-1}\int_{B_{3\rho^{1-\beta}}(x)}|f_\e|^2\\&-\int_s^r\tilde C_\beta\e\rho^{\gamma-2}\left(1+\int_{\{|u_\e| \geq 1\} \cap \Omega} W'(u_\e)^2\right).
\end{split}
	\end{align}
By the $\e$-Upper Density Bound \eqref{eqn_eDensityUpperBound} we get
 	\begin{align*}
	-\int_s^r\rho^{p_3\gamma-n-1}\mu_\e(B_{2\rho}(x))&= - \int_s^r2^n\rho^{p_3\gamma-1} \frac{\mu_\e(B_{2\rho}(x)) }{( 2 \rho)^n} \\
	&\geq - \int_s^r2^n\rho^{p_3\gamma-1} \left(C(\Lambda_0,\Omega') + \frac{\phi(\e)}{\rho^n} \right)\\
	&\geq - C(\Lambda_0,\Omega') \left( r^{p_3\gamma} - s^{p_3\gamma}\right)- \frac{\phi(\e)}{p_3\gamma-n+1}\left( r^{p_3\gamma-n} - s^{p_3\gamma -n } \right).
	\end{align*}
The last term in \eqref{eqn_IntegralNegativeDiscrepancyLowerBound} may be estimated as follows
	\begin{align*}
	-\int_s^r\tilde C_\beta\e\rho^{\gamma-2}\left(1+\int_{\{|u_\e| \geq 1\} \cap \Omega} W'(u_\e)^2\right)\geq -\tilde{C}_\beta \int_s^t \rho^{\gamma-1} d \rho \leq -\tilde{C}_\beta( r^\gamma-s^\gamma).
	\end{align*}
Using the bound
	\begin{align*}
	\left(\frac{\mu_\e(B_\rho(x))}{\rho^n}\right)^\frac{q_0-1}{q_0} \leq \left(1+\frac{\mu_\e(B_\rho(x))}{\rho^n}\right)
	\end{align*}
and the $\e$-Upper Density Bound \eqref{eqn_eDensityUpperBound}, we get
	\begin{align*}
	-\int_s^rC(\Lambda_0)\rho^{-\frac{n}{q_0}}\left(\frac{\mu_\e(B_\rho(x))}{\rho^n}\right)^\frac{q_0-1}{q_0}&\geq -\int_s^r C(\Lambda_0,\Omega')\rho^{-\frac{n}{q_0}} \left(1+\frac{\mu_\e(B_\rho(x))}{\rho^n}\right)\\
	&\geq -\int_s^r C(\Lambda_0,\Omega')\rho^{-\frac{n}{q_0}} \left(1+C(\Lambda_0,\Omega') + \frac{\phi(\e)}{\rho^n}\right)\\
	&\geq -C(\Lambda_0,\Omega')\left( r^{1 - \frac{n}{q_0}}-s^{1-\frac{n}{q_0}}\right)\\
	&- C(\Lambda_0,\Omega')\phi(\e) \left( r^{1 -n - \frac{n}{q_0}}-s^{1-n-\frac{n}{q_0}}\right).
	\end{align*}
Thus, plug all the above estimates of terms in \eqref{IntegralMonotonicitywithL2MeanCurvature}, we get
	\begin{align}\label{IntegralMonotonicitywithL2MeanCurvature2}
	\begin{split}
\frac{\mu_\e(B_r(x))}{r^n}-\frac{\mu_\e(B_s(x))}{s^n}&\geq - C(\Lambda_0,\Omega') \left( r^{p_3\gamma} - s^{p_3\gamma}\right)- \frac{\phi(\e)}{p_3\gamma-n+1}\left( r^{p_3\gamma-n} - s^{p_3\gamma -n } \right)\\
	&-\int_s^r\tilde C_{\beta}\e\rho^{-M\gamma-n-1}\left(\int_{B_{3\rho^{1-\beta}}(x)}|f_\e|^2\right)d\rho-\tilde{C}_\beta(r^\gamma-s^\gamma)\\
	&-C(\Lambda_0,\Omega')\left( r^{1 - \frac{n}{q_0}}-s^{1-\frac{n}{q_0}}\right) - C(\Lambda_0,\Omega')\phi(\e) \left( r^{1-n - \frac{n}{q_0}}-s^{1-n-\frac{n}{q_0}}\right).
\end{split}
	\end{align}
Next, we estimate the term $\int_s^r\tilde C_\beta\e\rho^{-M\gamma-n-1}\left(\int_{B_{3\rho^{1-\beta}}(x)}|f_\e|^2\right)d\rho$ in the following claim.
\begin{claim}
There exists $x\in B_\frac{r}{2}$ such that
	\begin{align}\label{eqn_LowerUpperBound}
	\e^{-n}\mu_\e(B_\e(x))\geq2\bar\theta_0>\bar\theta_0\geq \int_\e^{\frac{r}{4}}\tilde C_\beta\e\rho^{-M\gamma-n-1}\left(\int_{B_{3\rho^{1-\beta}}(x)}|f_\e|^2\right)d\rho,
	\end{align}
for some universal constant $\bar\theta_0>0$.
\end{claim}
\begin{proof}[Proof of Claim]
Consider a point $x\in B_\frac{r}{2}$ with $|u_\e(x)|\leq 1-\tau$, for some $0<\tau<1$.
We can assume $\e^{-n}\mu_\e(B_\e(x))\leq1$(otherwise the conclusion automatically follows), and so
	\begin{align*}
	\e^{-n-1}\int_{B_\e(x)}u_\e^p\leq\e^{-n-1}\int_{B_\e(x)}c_0^p\leq c_0^p\omega_{n+1}, \forall p>1.
	\end{align*}
From Theorem \ref{GradientBound} we have
	\begin{align*}
	\e^\frac{1}{2}\| u\|_{C^{0,\frac{1}{2}}(B_{1-\e}(x))}\leq C,
	\end{align*}
and thus
	\begin{align*}
	|u_\e|\leq 1-\frac{\tau}{2}, \quad \text{ in $B_{\frac{\tau^2\e}{4C^2}}(x)$}.
	\end{align*}
So since $W(t) =(1-t^2)^2 =(1+t)^2(1-t)^2$ we find in $B_{\frac{\tau^2\e}{4C^2}}(x)$
	\begin{align*}
	W(u_\e)=(1+|u_\e|)^2(1-|u_\e|)^2 \geq \frac{\tau^2}{4}
	\end{align*}
	\begin{align}\label{LowerBoundAtScaleEpsilon}
	\e^{-n}\mu_\e(B_\e(x))\geq\e^{-n}\int_{B_{\frac{\tau^2\e}{4C^2}}(x)}\frac{W(u_\e)}{\e}&\geq\e^{-n-1}\omega_{n+1}\left(\frac{\tau^2\e}{4C^2}\right)^{n+1}\frac{\tau^2}{4}\\\nonumber
&\geq C_n\tau^{2n+4}.
	\end{align}
Denote
	\begin{align*}
	2\bar\theta_0:=\min\{1,C_n\tau^{2n+4}\},
	\end{align*}
then for $x\in B_{\frac{r}{2}}\cap\{|u_\e|\leq1-\tau\}$ the first inequality in the conclusion of the claim holds.
Applying the error estimates Proposition \ref{prop_errorterms} with the choice $\Omega'=B_\frac{r}{4}$ and $\Omega=B_\frac{r}{2}$, for sufficiently small $\tau$
	\begin{align*}
	\mu_\e(B_\frac{r}{4})&=\mu_\e\left(B_\frac{r}{4}\cap\{|u_\e|<1-\tau\}\right)+\mu_\e\left(B_\frac{r}{4}\cap\{|u_\e|\geq1-\tau\}\right)\\
	&\leq C\mu_\e\left(B_\frac{r}{4}\cap\{|u_\e|<1-\tau\}\right)+C\e\int_{B_{\frac{r}{2}}}|f_\e|^2+C\e(\tau r^n+\tau^2 r^{n-1})+Cr^{-2}\e.
	\end{align*}
Notice by \eqref{WillmoreTermBound}, the second term $\e \int_{B_{r/2}} |f_\e|^2 \leq \e^2 C(\Lambda_0, E_0)$. So the last three terms are at most of order $O(\e)$.
Hence, as $0\in\spt\mu$, by passing to limit $\e\rightarrow0$ we have
	\begin{align*}
	0<\mu(B_{\frac{r}{4}})\leq\liminf_{\e\rightarrow0}\mu_\e(B_\frac{r}{4})\leq\liminf_{\e\rightarrow0}\mu_\e\left(B_\frac{r}{4}\cap\{|u_\e|<1-\tau\}\right).
	\end{align*}
And in the set $\{|u_\e|\leq1-\tau\}$, we get by Lemma \ref{L1PositiveDiscrepancyVanishing} that
	\begin{align}\label{MeasureLowerBound}
	&\liminf_{\e\rightarrow0}\e^{-1}\mathcal L^{n+1}(B_\frac{r}{2}\cap\{|u_\e|\leq1-\tau\})\\\nonumber
&\geq\liminf_{\e\rightarrow0}\e^{-1}\int_{B_\frac{r}{2}\cap\{|u_\e|\leq1-\tau\}}\frac{W(u_\e)}{\tau^2}\\\nonumber
&=\liminf_{\e\rightarrow0}\frac{1}{\tau^2}\left(\mu_\e-\xi_\e\right)(B_\frac{r}{2}\cap\{|u_\e|\leq1-\tau\})\\\nonumber
&\geq\frac{1}{\tau^2}\liminf_{\e\rightarrow0}\mu_\e\left(B_\frac{r}{4}\cap\{|u_\e|<1-\tau\}\right){\color{red}-}\liminf_{\e\rightarrow0}\frac{1}{\tau^2}\xi_{\e,+}(B_\frac{r}{2}\cap\{|u_\e|\leq1-\tau\})\\\nonumber
&\geq \frac{\mu(B_\frac{r}{4})}{\tau^2}>0.
	\end{align}
(This guarantees we can always choose such a point $x\in B_\frac{r}{2}$ with $|u_\e(x)|\leq1-\tau$ if $0\in\spt\mu$.)
To complete the proof, we define for $0<\rho<r_1$ the convolution
	\begin{align*}
	\omega_{\e,\rho}(x):=\rho^{-n-1}\left(\chi_{B_\rho}*\frac{1}{\e}|f_\e|^2\right)(x)=\rho^{-n-1}\int_{B_\rho(x)}\frac{1}{\e}|f_\e|^2,
	\end{align*}
with
	\begin{align*}
	\|\omega_{\e,\rho}(x)\|_{L^1(B_\frac{r_1}{2})}\leq\int_{B_{\frac{r_1}{2}+r_1}}\frac{1}{\e}|f_\e|^2\leq C(\Lambda_0,E_0)<\infty,
	\end{align*}
by \eqref{WillmoreTermBound}. Denote by $\omega_\e(x):=\int_0^{r_1}\omega_{\e,\rho}(x)d\rho$, we have
	\begin{align*}
	\|\omega_\e (x)\|_{L^1(B_\frac{r_0}{2})}\leq r_1 C(\Lambda_0,E_0)<\infty.
	\end{align*}
Now we can estimate the term on the right hand side in the claim, by a change of variables $t=3\rho^{1-\beta}$. Here $\beta:=\beta(r_1)$ is chosen small enough such that $3\left(\frac{r_1}{4}\right)^{1-\beta}\leq r_1$.
							We calculate, setting $ t = 3 \rho^{1-\beta}$
	\begin{align*}
	\int_\e^{\frac{r}{4}}\rho^{-M\gamma-n-1}\left(\int_{B_{3\rho^{1-\beta}}(x)}\frac{1}{\e}|f_\e|^2\right)d\rho&=\int_{3\e^{1-\beta}}^{3\left(\frac{r}{4}\right)^{1-\beta}}\left(\frac{t}{3}\right)^\frac{-M\gamma-n-1}{1-\beta}\left(\int_{B_t(x)}\frac{1}{\e}|f_\e|^2\right)d\left(\frac{t}{3}\right)^\frac{1}{1-\beta}\\
	&\leq C_\beta\int_{3\e^{1-\beta}}^{3\left(\frac{r}{4}\right)^{1-\beta}} t^\frac{-M\gamma-n-1+\beta}{1-\beta}\left(\int_{B_t}\frac{1}{\e}|f_\e|^2\right)dt\\
	&\leq C_\beta\int_{3\e^{1-\beta}}^{3\left(\frac{r_1}{4}\right)^{1-\beta}} t^{\frac{-M\gamma-n-1+\beta}{1-\beta}+(n+1)}\omega_{\e,t}(x)dt.
	\end{align*}
We find
	\begin{align*}
	\frac{-M\gamma-n-1+\beta}{1-\beta}+(n+1)= \frac{- M \gamma - n \beta}{1-\beta}< 0
	\end{align*}
so that $ t^{\frac{- M \gamma - n \beta}{1-\beta} }$ is a decreasing function.
Hence we get the bound
	\begin{align}\label{eqn_L2IntegralUpperBound}
	\begin{split}
\int_\e^{\frac{r}{4}}\rho^{-M\gamma-n-1}\left(\int_{B_{3\rho^{1-\beta}}(x)}\frac{1}{\e}|f_\e|^2\right)d\rho&\leq C_\beta\int_{3\e^{1-\beta}}^{3\left(\frac{r_1}{4}\right)^{1-\beta}} \left(3\e^{1-\beta}\right)^\frac{-M\gamma-n\beta}{1-\beta}\omega_{\e,t}(x)dt\\
	&\leq C_\beta \e^{-M\gamma-n\beta}\int_0^{r_1}\omega_{\e,t}(x)dt\\
	&\leq C_\beta \e^{-M\gamma-n\beta}\omega_\e(x).
\end{split}
	\end{align}
Choosing $M\gamma<\frac{1}{2}$ and $\beta$ sufficiently small so that $M\gamma+n\beta<\frac{1}{2}$, and applying the weak $L^1$ inequality for the distribution function and \eqref{eqn_L2IntegralUpperBound}, we get for some $\tilde C_\beta$ depending on $\beta$
	\begin{align}\label{MeasureUpperBound}
	&\mathcal L^{n+1}\left(B_{\frac{r}{2}}\cap\left\{\int_\e^{\frac{r}{4}}\tilde C_\beta\e\rho^{-M\gamma-n-1}\left(\int_{B_{3\rho^{1-\beta}}(x)}|f_\e|^2\right)d\rho\geq\bar\theta_0\right\}\right)\\\nonumber
&\leq\mathcal L^{n+1}\left(B_{\frac{r}{2}}\cap\left\{C_\beta\e^2\e^{-M\gamma-n\beta}\omega_\e(x)\geq\bar\theta_0\right\}\right)\\\nonumber
&\leq C_\beta\e^{2-(M\gamma+n\beta)}\bar\theta_0^{-1}\|\omega_\e\|_{L^1(B_\frac{r}{2})}\\\nonumber
&\leq C_\beta\e^{2-(M\gamma+n\beta)}\bar\theta_0^{-1}\|\omega_{\e,\rho}(x)\|_{L^1(B_\frac{r_1}{2})}\\\nonumber
&\leq C_\beta\e^{2-(M\gamma+n\beta)}\bar\theta_0^{-1} C(\Lambda_0,E_0)\\
	&\rightarrow0,\nonumber
	\end{align}
as $\e\rightarrow0$.
This guarantees we can always choose such a point $x'\in B_\frac{r}{2}$ with
	\begin{align*}
	\left\{\int_\e^{\frac{r}{4}}\tilde C_\beta\e\rho^{-M\gamma-n-1}\left(\int_{B_{3\rho^{1-\beta}}(x')}|f_\e|^2\right)d\rho\leq\bar\theta_0\right\}.
	\end{align*}
We can thus combine \eqref{MeasureLowerBound} with \eqref{MeasureUpperBound} to find an $x\in B_\frac{r}{2}$ so that the upper bound and lower bound in the claim holds.
\end{proof}
With this claim, we proceed with the proof of the density lower bound. For the $\bar\theta_0$ obtained from the claim, we denote by $s:=\sup\{0\leq\rho\leq\frac{r}{4}:\frac{\mu_\e(B_\rho(x))}{\rho^n}\geq2\bar\theta_0\}$. And it is obvious from \eqref{LowerBoundAtScaleEpsilon}
	\begin{align*}
	s\geq\e.
	\end{align*}
By this choice of $s$, we have
	\begin{align*}
	\frac{\mu_\e(B_s(x))}{s^n}&\geq2\bar\theta_0,\\
	\frac{\mu_\e(B_\rho(x))}{\rho^n}&\leq2\bar\theta_0, \forall \rho\in\left[s,\frac{r}{4}\right].
	\end{align*}
Substituting $\frac{r}{4}$ for $r$ in the integral form of the almost monotonicity formula \eqref{IntegralMonotonicitywithL2MeanCurvature2}, we get from \eqref{eqn_LowerUpperBound} the following density lower bound
	\begin{align*}
	2^n\left[\frac{\mu_\e(B_\frac{r}{2}(x))}{\left(\frac{r}{2}\right)^n}\right]&\geq\frac{\mu_\e(B_\frac{r}{4}(x))}{\left(\frac{r}{4}\right)^n}\\
	&\geq \frac{\mu_\e(B_s(x))}{s^n} - C(\Lambda_0,\Omega') \left( \left(\frac{r}{4}\right)^{p_3\gamma} - s^{p_3\gamma}\right)- \frac{\phi(\e)}{p_3\gamma-n+1}\left( \left(\frac{r}{4}\right)^{p_3\gamma-n} - s^{p_3\gamma -n} \right)\\
	&-\int_s^{r/4}\tilde C_{\beta}\e\rho^{-M\gamma-n-1}\left(\int_{B_{3\rho^{1-\beta}}(x)}|f_\e|^2\right)d\rho-\tilde{C}_\beta(\left(\frac{r}{4}\right)^\gamma-s^\gamma)\\
	&-C(\Lambda_0,\Omega')\left( \left(\frac{r}{4}\right)^{1 - \frac{n}{q_0}}-s^{1-\frac{n}{q_0}}\right) - C(\Lambda_0,\Omega')\phi(\e) \left( \left(\frac{r}{4}\right)^{1 -n - \frac{n}{q_0}}-s^{1-n-\frac{n}{q_0}}\right)\\
	&\geq 2 \overline{\theta}_0 - C(\Lambda_0,\Omega')r^{\gamma_n} - C(\Lambda_0, \Omega') \phi(\e) r^{-n-\frac{n}{q_0} }-C(\Lambda_0,\Omega') \phi(\e) r^{p_3\gamma-n+1}\\
	&-\overline{\theta}_0 \\
	&\geq \overline{\theta}_0 - C(\Lambda_0,\Omega')r^{\gamma_n} - C(\Lambda_0, \Omega') \phi(\e) r^{-n-\frac{n}{q_0} }-C(\Lambda_0,\Omega') \phi(\e) r^{p_3\gamma-n+1},
	\end{align*}
where $\gamma_n:=\min\{p_3\gamma,\gamma,1-\frac{n}{q_0}\}>0$, and $\phi(\e)\rightarrow$ as $\e\rightarrow0$ by Theorem \ref{DensityUpperBound}.
As $ B_{\frac{r}{2}}(x) \subseteq B_r(0)$ we let $\e\rightarrow 0$ and get for some $\gamma_n>0$
	\begin{align*}
	\frac{\mu(\overline{B_r})}{r^n}\geq\limsup_{\e \rightarrow 0}\frac{\mu_\e(B_r)}{r^n}\geq\limsup_{\e \rightarrow 0}\frac{\mu_\e(B_{\frac{r}{2}}(x))}{r^n} \geq C_n\bar\theta_0- C_nr^{\gamma_n}.
	\end{align*}
Approximating $r'\nearrow r$ we get for $ 0 < r< r_1(\Omega')$
	\begin{align*}
	\frac{\mu(B_r(0))}{r^n } \geq c_0 \overline{\theta}_0
	\end{align*}
and hence
	\begin{align*}
	\theta_*^n(\mu) \geq \frac{\overline{\theta}}{\omega_n} \quad \text{$\mu$-a.e. in $\Omega$}.
	\end{align*}
which completes the proof.
\end{proof}
Before proving the rectifiability of the limit measure, we need to show that the full discrepancy vanishes as the limit $\e\rightarrow0$.
\begin{proposition}\label{VanishingDiscrepancy}
	\begin{align*}
	|\xi_\e|\rightarrow0 \quad \&\quad |\xi|=0.
	\end{align*}
\end{proposition}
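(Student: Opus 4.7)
By Lemma \ref{L1PositiveDiscrepancyVanishing} we already have $\xi_{\e,+} \to 0$ as Radon measures, so since $|\xi_\e| = 2\xi_{\e,+} - \xi_\e$ as signed measures, both conclusions of the Proposition reduce to proving $\xi_\e \to 0$ as a signed Radon measure: then $\xi_{\e,-} = \xi_{\e,+} - \xi_\e \to 0$ gives $|\xi_\e| = \xi_{\e,+}+\xi_{\e,-}\to 0$, and any weak-$*$ limit $\xi$ of $\xi_\e$ is then necessarily $0$, giving $|\xi|=0$.

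The plan is to pass to the limit $\e\to 0$ in the integral almost-monotonicity formula \eqref{IntegralAlmostMonotonicity2} and extract a density bound on the limit. After discarding the non-negative conical term and using estimate \eqref{ConicalityTermBound} to bound the $f_\e$-inhomogeneity contribution by $C\rho^{-n/q_0}$ (integrable since $q_0>n$), passing to a weak-$*$ subsequential limit $\xi_\e\rightharpoonup \xi$ (with $\xi\leq 0$ from $\xi_{\e,+}\to 0$) yields, for $\mu$-a.e.\ $x\in\Omega$ and all sufficiently small $0<r<R$,
\begin{align*}
\int_r^R \frac{-\xi(B_\rho(x))}{\rho^{n+1}}\,d\rho \;\leq\; \frac{\mu(B_R(x))}{R^n} - \frac{\mu(B_r(x))}{r^n} + C R^{1-\frac{n}{q_0}}.
\end{align*}

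Since $\mu\ll \mathcal H^n$ by Theorem \ref{DensityUpperBound}, the density ratio $\mu(B_\rho(x))/\rho^n$ has a finite limit $\theta(x)$ at $\mu$-a.e.\ $x$ by the Lebesgue--Besicovitch differentiation theorem, so the right-hand side above tends to $0$ as $r,R\to 0$. If the lower $n$-density of $-\xi$ at $x$ were bounded away from zero, say $-\xi(B_\rho(x))/\rho^n \geq c > 0$ for small $\rho$, the left-hand side would blow up like $c\log(R/r)\to\infty$, a contradiction. Hence the $n$-density of $|\xi|=-\xi$ vanishes $\mu$-a.e. On the other hand $|\xi|\leq \mu$ at the level of measures (from the pointwise bound $|\xi_\e|\leq \mu_\e$ and weak-$*$ lower semicontinuity), so $|\xi|\ll\mu$; combining the Radon--Nikodym representation with vanishing density at every $x$ where $\theta(x)>0$ (guaranteed $\mu$-a.e.\ by the lower density bound of Theorem \ref{DensityLowerBound}) forces $|\xi|=0$. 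Tightness from $|\xi_\e|\leq \mu_\e$ then promotes subsequential convergence to full convergence $\xi_\e\to 0$, completing the proof.

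The principal obstacle will be justifying the limit passage in the almost-monotonicity identity with full rigour: namely, ensuring $\xi_\e(B_\rho)\to\xi(B_\rho)$ at a.e.\ continuity radius $\rho$ (via Fubini and the coarea-type structure of weak-$*$ convergence for radii avoiding the exceptional set), verifying that the conical Dirichlet term passes to a non-negative limit, and checking that the $L^{q_0}$ inhomogeneity bound \eqref{ConicalityTermBound} applies uniformly in $\e$ with constants depending only on $\Lambda_0$ and $E_0$ (as already encoded in \eqref{AlmostMonotonicityWithExponentialConstant}).
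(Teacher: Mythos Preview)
Your approach is essentially the same as the paper's: both use the integrated almost-monotonicity formula to derive a logarithmic-divergence contradiction showing $\theta_*^n(|\xi|,x)=0$, then combine with the density lower bound of Theorem~\ref{DensityLowerBound} and the Radon--Nikodym theorem to conclude $|\xi|=0$. Your reduction of $|\xi_\e|\to 0$ to $\xi_\e\rightharpoonup 0$ via the subsequence argument is in fact more explicit than the paper, which only writes out the $|\xi|=0$ part.

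There is, however, one step you should repair. You claim that ``the density ratio $\mu(B_\rho(x))/\rho^n$ has a finite limit $\theta(x)$ at $\mu$-a.e.\ $x$ by the Lebesgue--Besicovitch differentiation theorem.'' This is not justified at this stage: Lebesgue--Besicovitch gives the existence of the $(n{+}1)$-dimensional density with respect to Lebesgue measure, not the $n$-dimensional density $\mu(B_\rho)/\rho^n$. Existence of the $n$-density $\mu$-a.e.\ is equivalent to rectifiability (or follows from it), and rectifiability is established only later in Theorem~\ref{Rectifiability}, \emph{using} the present Proposition---so invoking it here is circular. Fortunately this claim is not needed: for the logarithmic contradiction you only require the right-hand side $\frac{\mu(B_R)}{R^n}-\frac{\mu(B_r)}{r^n}+CR^{1-n/q_0}$ to stay \emph{bounded} as $r\to 0$, and that follows immediately from the upper density bound of Theorem~\ref{DensityUpperBound} together with $\mu(B_r)/r^n\geq 0$. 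The paper's proof makes exactly this move. Once you drop the unnecessary convergence claim, your argument goes through and matches the paper's.
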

\begin{proof}
We first prove the lower $n$-dimensional density of the discrepancy measure vanishes. Namely
	\begin{align*}
	\theta_*^n(|\xi|)=\liminf_{\rho\rightarrow0}\frac{|\xi|(B_\rho)}{\rho^n}=0.
	\end{align*}
If not, there exists $0<\rho_0,\delta<1$ and $B_{\rho_0}\subset\Omega$ such that
	\begin{align*}
	\frac{|\xi|(B_\rho(x))}{\rho^n}\geq\delta,\quad \forall 0<\rho\leq\rho_0.
	\end{align*}
Multiplying both sides of \eqref{MonotonicityWithLpMeanCurvature} by an integrating factor and integrating from $r$ to $\rho_0$ as in the proof of Theorem \ref{DensityUpperBound} we get
	\begin{align*}
	C(\Lambda_0,\Omega')\left(\frac{\mu_\e(B_{\rho_0})}{\rho_0^n}\right)-C(\Lambda_0,\Omega')\left(\frac{\mu_\e(B_r)}{r^n}\right)&\geq -C(\Lambda_0,\Omega')\int_r^{\rho_0}\frac{\xi_\e (B_{r_0})}{\rho^{n+1}}d\rho.
	\end{align*}
Using Lemma \ref{L1PositiveDiscrepancyVanishing}, that is $\xi_+=0$ and Theorem \ref{DensityUpperBound}, we have when passing to the limit $\e\rightarrow0$
	\begin{align*}
	\tilde C(\Lambda_0,\Omega')\geq C(\Lambda_0,\Omega')\int_r^{\rho_0}\frac{\xi_-(B_\rho)}{\rho^{n+1}}d\rho&= C(\Lambda_0,\Omega')\int_r^{\rho_0}\frac{\xi_-(B_\rho)+\xi_+(B_\rho)}{\rho^{n+1}}d\rho\\
	&= C(\Lambda_0,\Omega')\int_r^{\rho_0}\frac{|\xi|(B_\rho)}{\rho^{n+1}}d\rho\\
	&\geq C(\Lambda_0,\Omega')\int_r^{\rho_0}\frac{\delta}{\rho}d\rho\\
	&= C(\Lambda_0,\Omega')\delta\ln\left(\frac{\rho_0}{r}\right).
	\end{align*}
This gives a contradiction by letting $r\rightarrow0$. By the density lower bound Theorem \ref{DensityLowerBound} and differentiation theorem for measures, we have
	\begin{align*}
	D_\mu|\xi|(x)=\liminf_{\rho\rightarrow0}\frac{|\xi|(B_\rho(x))}{\mu(B_\rho(x))}&\leq\frac{\liminf_{\rho\rightarrow0}\frac{|\xi|(B_\rho(x))}{\rho^n}}{\limsup_{\rho\rightarrow0}\frac{\mu(B_\rho(x))}{\rho^n}}\\
	&\leq\frac{\theta_*^n(|\xi|,x)\omega_n}{\bar\theta}=0
	\end{align*}
and this shows
	\begin{align*}
	|\xi|=D_\mu|\xi|\cdot\mu=0.
	\end{align*}
\end{proof}
\begin{proposition} \label{prop_FirstVariation}
We choose a Borel measurable function $ \nu_\e : \Omega\rightarrow \partial B_1(0)$ extending $ \frac{\nabla u_\e} {| \nabla u_\e|}$ on $ \nabla u_\e\neq 0$ and consider the varifold $ V_\e =\mu_\e \otimes \nu_\e$ that is
	\begin{align}\label{eqn_Varifold}
	\int_{\{| \nabla u |\neq 0\}} \phi \left(x,I - \frac{\nabla u(x)}{|\nabla u(x)|}\otimes\frac{\nabla u(x)}{|\nabla u(x)|}\right) d\mu_i(x), \quad \phi \in C_c(G_n(\Omega)).
	\end{align}
The first variation is given by
	\begin{align}\label{eqn_FirstVariation}
	\delta V_\e( \eta) =-\int f_\e\langle\nabla u_\e,\eta\rangle dx+\int \nabla\eta\left(\frac{\nabla u_\e}{|\nabla u_\e|},\frac{\nabla u_\e}{|\nabla u_\e|}\right)d\xi_\e, \quad \forall \eta \in C_c^1(\Omega\times \mathbb R^{n+1}).
	\end{align}
\end{proposition}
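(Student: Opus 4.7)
The plan is to compute $\delta V_\e(\eta)$ directly from its definition, isolate the discrepancy contribution via the decomposition $d\mu_\e=\e|\nabla u_\e|^2\,dx-d\xi_\e$, and reduce the remaining bulk term to a pairing against $f_\e$ using the Allen--Cahn stress--energy tensor. Starting from the definition,
\begin{align*}
\delta V_\e(\eta) = \int \nabla\eta\cdot(I-\nu_\e\otimes\nu_\e)\,dV_\e = \int \bigl[\operatorname{div}(\eta)-\nabla\eta(\nu_\e,\nu_\e)\bigr]\,d\mu_\e.
\end{align*}

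For the second integrand, observe that on $\{|\nabla u_\e|\neq 0\}$ one has $\nabla\eta(\nu_\e,\nu_\e)|\nabla u_\e|^2=\nabla\eta(\nabla u_\e,\nabla u_\e)$, and the function $\nabla\eta(\nabla u_\e,\nabla u_\e)$ vanishes on $\{|\nabla u_\e|=0\}$. Combining with $d\mu_\e=\e|\nabla u_\e|^2\,dx-d\xi_\e$ (which is immediate from the definitions of $\mu_\e$ and $\xi_\e$) yields
\begin{align*}
\int \nabla\eta(\nu_\e,\nu_\e)\,d\mu_\e = \int \e\,\nabla\eta(\nabla u_\e,\nabla u_\e)\,dx - \int \nabla\eta(\nu_\e,\nu_\e)\,d\xi_\e,
\end{align*}
which, once substituted back, already produces the discrepancy term of \eqref{eqn_FirstVariation}.

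The remaining step is to show
\begin{align*}
\int \operatorname{div}(\eta)\,d\mu_\e - \int \e\,\nabla\eta(\nabla u_\e,\nabla u_\e)\,dx = -\int f_\e\,\langle \nabla u_\e,\eta\rangle\,dx.
\end{align*}
The cleanest route is through the Allen--Cahn stress--energy tensor
\begin{align*}
T_{ij} := \delta_{ij}\Bigl(\tfrac{\e|\nabla u_\e|^2}{2}+\tfrac{W(u_\e)}{\e}\Bigr) - \e\,\partial_i u_\e\,\partial_j u_\e,
\end{align*}
which (after using symmetry of $T$) makes the integrand on the left-hand side equal to $T_{ij}\,\partial_j\eta_i$. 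Since $\eta\in C_c^1$, integration by parts converts this to $-\int \eta_i\,\partial_j T_{ij}\,dx$, and a direct divergence calculation yields
\begin{align*}
\partial_j T_{ij} = \partial_i u_\e\Bigl(\tfrac{W'(u_\e)}{\e}-\e\Delta u_\e\Bigr),
\end{align*}
the essential cancellation being that $\e\,\partial_i(|\nabla u_\e|^2/2)$ arising from $\partial_i E$ matches exactly $\e\sum_j \partial_i\partial_j u_\e\,\partial_j u_\e$ arising from $-\partial_j(\partial_i u_\e\,\partial_j u_\e)$. The Allen--Cahn equation \eqref{PFVe} identifies the parenthesised factor as $-f_\e$, producing the forcing term with the correct sign.

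The main technical obstacle is this divergence computation for $T_{ij}$: one must carefully track the Hessian-type cross terms and verify their cancellation, which uses the interchangeability of partial derivatives on $u_\e$ (legitimate by the $W^{2,p}$ regularity established in Theorem \ref{GradientBound}). A secondary subtlety is that the proposition permits $V_\e$ to be defined via an arbitrary Borel extension of $\nu_\e$ off $\{|\nabla u_\e|\neq 0\}$; however this ambiguity enters both $\int\nabla\eta(\nu_\e,\nu_\e)\,d\mu_\e$ and $\int \nabla\eta(\nu_\e,\nu_\e)\,d\xi_\e$ in exactly the same way on $\{|\nabla u_\e|=0\}$, so it cancels upon combination, leaving a formula that agrees with the definition from Section \ref{sec_Prelim} restricted to $\{|\nabla u_\e|\neq 0\}$.
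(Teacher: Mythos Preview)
Your proof is correct and follows essentially the same route as the paper: both start from the definition $\delta V_\e(\eta)=\int(\Div\eta-\nabla\eta(\nu_\e,\nu_\e))\,d\mu_\e$, introduce the Allen--Cahn stress--energy tensor $T_{ij}$, compute its divergence to obtain $(-\e\Delta u_\e+W'(u_\e)/\e)\partial_j u_\e$, and integrate by parts to produce the $f_\e$ term while the identity $d\mu_\e=\e|\nabla u_\e|^2\,dx-d\xi_\e$ isolates the discrepancy contribution. Your treatment is in fact slightly more careful than the paper's in that you explicitly address why the arbitrary Borel extension of $\nu_\e$ on $\{\nabla u_\e=0\}$ does not affect the final formula.
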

\begin{proof}
By equation \eqref{eqn_FirstVariationGeneral}, we have
	\begin{align*}
	\delta V_\e( \eta)&= \int_{\Omega\times G(n+1,n)} \Div_S \eta(x) dV_\e(x,S) \\
	&= \int_{\Omega}( \Div\eta - \nabla\eta(\nu_\e,\nu_\e))d \mu_\e\\
	&= \int_{\Omega}( \Div\eta - \nabla\eta(\nu_\e,\nu_\e)) \left( \frac{\e|\nabla u_\e|^2}{2} + \frac{W(u_\e)}{\e} \right) d \mathcal L^{n+1}.
	\end{align*}
The Stress-Energy tensor for the Allen--Cahn equation is given by
	\begin{align*}
	T_{i j}&=\e\frac{\left|\nabla u_\e\right|^2}{2} \delta_{i j}-\e \nabla_i u_\e \nabla_j u_\e +W\left(u_\e \right) \delta_{i j},\\
	\nabla_i T_{i j}&=\e \nabla_i \nabla_k u_\e \nabla_k u_\e \delta_{ij}-\e \Delta u_\e \nabla_j u_\e-\e \nabla_i u_\e \nabla_i \nabla_k u_\e\\
	&+W^{\prime}\left(u_\e\right) \nabla_i u_\e \delta_{ij}\\
	&=\left(-\e \Delta u_\e +W^{\prime}\left(u_\e\right)\right) \nabla_j u_\e.
	\end{align*}
Now
	\begin{align*}
	T_{i j} \nabla_i \eta_j&=\left(\frac{\e | \nabla u_\e|^2}{2}+W\left(u_\e\right)\right) \Div \eta-\e \nabla \eta\left(\nabla u_\e, \nabla u_\e\right) \\
	&=\left(\frac{\e | \nabla u_\e|^2}{2}+W\left(u_\e\right)\right) \Div \eta-\nabla \eta(\nu_\e,\nu_\e)\e |\nabla u_\e|^2.
	 \end{align*}
Integrating by parts, we get
	\begin{align*}
	\int_{\Omega} \left(\frac{\e | \nabla u_\e|^2}{2}+W\left(u_\e\right)\right) \Div \eta-\nabla \eta(\nu_\e,\nu_\e)\e |\nabla u_\e|^2&= - \int_{\Omega} \nabla_i T_{i j} \eta_j\\
	&= \int_\Omega \left(-\e \Delta u_\e +W^{\prime}\left(u_\e\right)\right) \langle \nabla u_\e , \eta \rangle.
	\end{align*}
Hence inserting this into our expression for the first variation we get
	\begin{align*}
	\delta V_\e( \eta)&= - \int_\Omega\left( - \e \Delta u_\e + \frac{W'(u_\e)}{\e} \right) \langle \nabla u_\e, \eta \rangle d \mathcal L^{n+1} + \int_{\Omega} \nabla \eta(\nu_\e,\nu_\e) d \xi_\e.
	\end{align*}
\end{proof}
Combining Theorem \ref{DensityUpperBound}, Theorem \ref{DensityLowerBound} and Proposition \ref{VanishingDiscrepancy}, we obtain
\begin{theorem}\label{Rectifiability}
After passing to a subsequence, the associated varifolds $V_\e\rightarrow V$ where $V$ is a rectifiable $n$-varifold with the weak mean curvature in $L_{loc}^{q_0}(\mu_V)$.
\end{theorem}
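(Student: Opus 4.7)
The plan is to extract a weakly convergent subsequence of the diffuse varifolds $V_\e$, identify the weight of the limit with $\mu=\lim_{\e\to 0}\mu_\e$, control the first variation of $V$ in terms of an $L^{q_0}$ generalised mean curvature, and then invoke Allard's rectifiability theorem for integral varifolds with locally bounded first variation. Since by Theorem \ref{DensityUpperBound} the measures $\|V_\e\|\leq\mu_\e$ are uniformly bounded on compact subsets of $\Omega$, a subsequence converges weakly-$*$ as Radon measures on $G_n(\Omega)$ to a limit varifold $V$.

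I would first reconcile $\|V\|$ with $\mu$. By definition $\|V_\e\|=\mu_\e\llcorner\{|\nabla u_\e|\neq 0\}$, so the only possible loss of mass is the part of $\mu_\e$ carried by the critical set of $u_\e$. On $\{|\nabla u_\e|=0\}$ the Dirichlet density vanishes while the potential density equals $W(u_\e)/\e$, hence the restriction of $\mu_\e-\|V_\e\|$ to this set is entirely negative discrepancy. Proposition \ref{VanishingDiscrepancy} gives $|\xi_\e|\to 0$ as Radon measures, so $\|V_\e\|$ and $\mu_\e$ share the same weak-$*$ limit and $\|V\|=\mu$.

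Next I would pass to the limit in the first variation formula \eqref{eqn_FirstVariation}. For $\eta\in C_c^1(\Omega;\mathbb R^{n+1})$, write
\begin{align*}
-\int f_\e\langle\nabla u_\e,\eta\rangle\,dx=-\int\Bigl(\tfrac{f_\e}{\e|\nabla u_\e|}\Bigr)\langle\nu_\e,\eta\rangle\,\e|\nabla u_\e|^2\,dx,
\end{align*}
and apply Hölder's inequality with exponents $q_0$ and $q_0/(q_0-1)$ using the hypothesis \eqref{IntegralBound} to get
\begin{align*}
\Bigl|\int f_\e\langle\nabla u_\e,\eta\rangle\,dx\Bigr|\leq\Lambda_0^{1/q_0}\Bigl(\int|\eta|^{q_0/(q_0-1)}\,d\|V_\e\|\Bigr)^{(q_0-1)/q_0}.
\end{align*}
The remaining term $\int\nabla\eta(\nu_\e,\nu_\e)\,d\xi_\e$ is bounded by $\|\nabla\eta\|_\infty|\xi_\e|(\spt\eta)$, which tends to zero by Proposition \ref{VanishingDiscrepancy}. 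Passing to the limit and using $\|V\|=\mu$ together with dominated convergence yields
\begin{align*}
|\delta V(\eta)|\leq\Lambda_0^{1/q_0}\|\eta\|_{L^{q_0/(q_0-1)}(\|V\|)}\qquad\forall\,\eta\in C_c^1(\Omega;\mathbb R^{n+1}).
\end{align*}
By duality $\delta V$ is absolutely continuous with respect to $\|V\|$ and its Radon--Nikodym derivative lies in $L^{q_0}_{\mathrm{loc}}(\|V\|)$, with $L^{q_0}$ norm bounded by $\Lambda_0^{1/q_0}$ on any precompact subdomain.

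Finally, Theorem \ref{DensityUpperBound} gives a locally uniform upper bound on $\theta^{*n}(\mu,\cdot)$, and Theorem \ref{DensityLowerBound} gives $\theta^n_*(\mu,\cdot)\geq\bar\theta/\omega_n>0$ for $\mu$-a.e.\ point of $\spt\mu$. Combined with the $L^{q_0}$ bound on $\delta V$ (so in particular $\|\delta V\|$ is locally finite), Allard's rectifiability theorem applies and gives that $V$ is an $n$-rectifiable varifold with weak mean curvature $H_V\in L^{q_0}_{\mathrm{loc}}(\mu_V)$. The delicate point here is the transition from the $\e$-level estimates to the limiting first variation: it is crucial that the discrepancy error $\int\nabla\eta(\nu_\e,\nu_\e)\,d\xi_\e$ be controlled, which is exactly why the vanishing discrepancy Proposition \ref{VanishingDiscrepancy} is the central analytic input in this final step. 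Integrality, which requires more refined local blow-up analysis, is handled separately in Section \ref{sec:integrality}.
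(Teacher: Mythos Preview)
Your proof is correct and follows essentially the same route as the paper's: pass to the limit in the first variation identity \eqref{eqn_FirstVariation}, kill the discrepancy term via Proposition~\ref{VanishingDiscrepancy}, bound the remaining $f_\e$-term by H\"older against the $L^{q_0}$ hypothesis \eqref{IntegralBound}, and then invoke Allard's rectifiability theorem using the density lower bound of Theorem~\ref{DensityLowerBound}. Your explicit identification of $\|V\|$ with $\mu$ via the observation that $\mu_\e-\|V_\e\|$ is supported on $\{\nabla u_\e=0\}$ and dominated by $|\xi_\e|$ is a detail the paper leaves implicit; otherwise the two arguments coincide (up to a harmless constant from using $d\|V_\e\|$ rather than $\e|\nabla u_\e|^2\,dx$ in the H\"older step).
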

\begin{proof}
We first compute the first variation of the associated varifolds $V_\e$ to the energy measure $\mu_\e$(c.f. \cite[Proposition 4.10]{roger2006modified}, \cite[Equation 4.3]{Tonegawa2020}). For any $\eta\in C_0^1(\Omega;\mathbb R^{n+1})$, using Proposition \ref{prop_FirstVariation} and Proposition \ref{VanishingDiscrepancy}
	\begin{align}\label{LocallyBoundedMeanCurvature}
	\begin{split}
|(\delta V)(\eta)|&=|\lim_{\e\rightarrow0}(\delta V_\e)(\eta)|\\
	&=\left|\lim_{\e\rightarrow0}\left(-\int f_\e\langle\nabla u_\e,\eta\rangle dx+\int \nabla\eta\left(\frac{\nabla u_\e}{|\nabla u_\e|},\frac{\nabla u_\e}{|\nabla u_\e|}\right)d\xi_\e\right)\right|\\
	&\leq\lim_{\e\rightarrow0}\int |f_\e| |\nabla u_\e||\eta| dx+\lim_{\e\rightarrow0}\int|\nabla\eta|d|\xi_\e|\\
	&\leq\lim_{\e\rightarrow0}\int \left|\frac{f_\e}{\e|\nabla u|}\right| |\eta| \e|\nabla u_\e|^2dx\\
	&\leq\lim_{\e\rightarrow0}\left(\int\left|\frac{f_\e}{\e|\nabla u_\e|}\right|^{q_0}\e|\nabla u_\e|^2\right)^\frac{1}{q_0}\left(\int|\eta|^\frac{q_0}{q_0-1}\e|\nabla u_\e|^2\right)^\frac{q_0-1}{q_0}\\
	&\leq\Lambda_0^\frac{1}{q_0}\|\eta\|_{L^\frac{q_0}{q_0-1}(\mu_V)}\left(\leq C(\Lambda_0,E_0)|\eta|\right).
\end{split}
	\end{align}
So we see the limit varifold has locally bounded first variation, combining with the density lower bound Theorem \ref{DensityLowerBound} we conclude the limit varifold is rectifiable by Allard's rectifiability theorem.
Moreover, the above calculation shows $\delta V$ is a bounded linear functional on $L_{loc}^\frac{q_0}{q_0-1}(\mu_V)$ and thus itself is in $L_{loc}^{q_0}(\mu_V)$.
\end{proof}
\section{Integrality}\label{sec:integrality}
In this section, we prove the integrality of the limit varifold.
\begin{theorem}\label{thm_integrality}
Let $\mu$ be defined by \eqref{eqn_Varifold}. Then $\frac{1}{\alpha}\mu$ is an integral $n$-varifold where $\alpha=\int_{-\infty}^\infty(\tanh' x)^2dx$ is the total energy of the heteroclinic $1$-d solution.
\end{theorem}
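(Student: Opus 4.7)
The plan is to follow the Hutchinson--Tonegawa blow-up and slicing strategy \cite{Hutchinson2000,tonegawa2003integrality}, showing that the inhomogeneity becomes asymptotically negligible under rescaling thanks to $q_0 > n$. The combination of Theorem \ref{Rectifiability}, Proposition \ref{VanishingDiscrepancy}, Theorem \ref{DensityUpperBound} and Theorem \ref{DensityLowerBound} already ensures $\mu$ is $n$-rectifiable with vanishing discrepancy and positive finite density almost everywhere; it remains to prove that $\theta^n(\mu,x_0) \in \alpha\,\mathbb N$ for $\mu$-a.e.\ $x_0$.

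\textbf{Blow-up.} Fix a $\mu$-a.e.\ point $x_0$ at which the approximate tangent plane $T_{x_0}$ exists and $\theta^n(\mu, x_0)$ is realized; after rotation assume $T_{x_0} = \mathbb R^n \times \{0\}$. Choose sequences $r_i \to 0$ and $\e_i \to 0$ with $\tilde \e_i := \e_i/r_i \to 0$, and set $\tilde u_i(y) := u_{\e_i}(x_0 + r_i y)$ and $\tilde f_i(y) := r_i f_{\e_i}(x_0 + r_i y)$. Then $\tilde u_i$ satisfies \eqref{PFVe} at scale $\tilde \e_i$ with forcing $\tilde f_i$, and a direct change of variables yields
\[
\int \left|\frac{\tilde f_i}{\tilde \e_i|\nabla \tilde u_i|}\right|^{q_0} \tilde \e_i|\nabla \tilde u_i|^2\, dy \;\leq\; r_i^{\,q_0-n}\,\Lambda_0 \;\xrightarrow{i\to\infty}\; 0,
\]
so the blow-up limit is effectively a homogeneous Allen--Cahn problem. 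Passing to a subsequence, the rescaled measures $\tilde \mu_i$ converge weakly to a limit $\mu_\infty$ supported on $T_{x_0}$ and translation-invariant along $T_{x_0}$, with constant density $\theta^n(\mu, x_0)$.

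\textbf{Slicing.} Write $y = (y', y_{n+1})$ with $y' \in T_{x_0}$. By Proposition \ref{VanishingDiscrepancy} (applied to the rescaled sequence, which inherits the same structural estimates) the discrepancy vanishes in the limit, forcing the asymptotic equipartition $\tilde \e_i |\partial_{n+1} \tilde u_i|^2/2 \approx W(\tilde u_i)/\tilde \e_i$. For $\mathcal L^n$-a.e.\ slice $y'$, the rescaled profile $w_i(t) := \tilde u_i(y', \tilde \e_i t)$ solves a one-dimensional Allen--Cahn equation with forcing going to zero; combining the gradient estimates of Theorem \ref{GradientBound} with the equipartition of energy, $w_i$ converges (up to subsequences and translations) on each maximal interval of monotonicity to $\pm\tanh(\cdot/\sqrt{2})$, each transition carrying exactly $\alpha$ of slice-energy. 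The uniform upper density bound Theorem \ref{DensityUpperBound} caps the number of transitions on each slice by a universal constant.

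\textbf{Counting and conclusion.} Let $N(y')$ denote the number of heteroclinic transitions on the slice at $y'$; integrating the equipartition bound along normal directions and using Fubini gives
\[
\mu_\infty \;=\; \alpha \int_{T_{x_0}} N(y')\, d\mathcal H^n(y'),
\]
and translation invariance of $\mu_\infty$ forces $N(y')$ to be $\mathcal L^n$-a.e.\ equal to a single integer $N$. Therefore $\theta^n(\mu, x_0) = N\alpha$, i.e.\ $\frac{1}{\alpha}\mu$ is integral. The main obstacle I anticipate is rigorously executing the slicing in the presence of $\tilde f_i$: although $\tilde f_i \to 0$ in the scale-invariant weighted norm above, one must exclude slice-wise pathologies (e.g.\ oscillations chattering near $\pm 1$) using Lemma \ref{L1PositiveDiscrepancyVanishing} together with the $L^{(n+1)/2+\delta_0}$ integrability of $f_\e$ from Theorem \ref{GradientBound}, and carefully pass to the limit in the slice-wise ODE to identify the heteroclinic profile on each interval of monotonicity.
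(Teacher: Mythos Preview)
Your blow-up reduction is correct and matches the paper: the rescaled diffuse mean curvature vanishes at rate $r_i^{q_0-n}$, and the paper even chooses $\rho_i$ more carefully so that $\frac{1}{\tilde\e_i}\int_{B_\rho}|\tilde f_i|^2 \leq \rho^{n-1}$ (condition \eqref{ConditionWillmoreTermDecay}), which is the quantitative input the later estimates actually consume. So reducing to Proposition~\ref{ReductionOfIntegrality} is exactly right.

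The gap is in your slicing step. The restriction $w_i(t) = \tilde u_i(y',\tilde\e_i t)$ does \emph{not} solve a one-dimensional Allen--Cahn equation: the Laplacian in \eqref{PFVe} couples all coordinates, and fixing $y'$ discards the tangential second derivatives rather than making them vanish. What makes the profile asymptotically one-dimensional is the vanishing of the tilt $\int \e|\nabla u_\e|^2\sqrt{1-\nu_{\e,n+1}^2}$ together with vanishing discrepancy, but one cannot pass to a limit ODE on a fixed line by Fubini alone; good fibres where all smallness conditions hold simultaneously must be selected, and the convergence to $\pm\tanh$ must be established in a topology strong enough to count transitions and to rule out the ``chattering near $\pm 1$'' you mention.

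The paper avoids the slice-ODE issue by working on $(n{+}1)$-dimensional balls of radius $\sim L\e$: Lemma~\ref{AlmostEnergyQuantization} uses a compactness/contradiction argument in $W^{2,\frac{n+1}{2}+\delta_0}$ to show that under small tilt, discrepancy and $\frac{1}{\e}\int|f_\e|^2$, the rescaled $u_\e$ converges in $W^{1,2}$ to the heteroclinic $q(y)=q_0(y_{n+1})$, giving energy $\approx\alpha$ per transition point. To cap the \emph{number} of transitions on each fibre the paper does not just invoke the density upper bound but proves a multi-sheet monotonicity formula (Lemma~\ref{MultiSheetMonotonicity}, built from the slab-localized identity \eqref{WeightedMonotonicitySlab} and an inductive sheet-separation in Proposition~\ref{prop_MultiSheetMonotonicity}) comparing $\sum_{x\in X} d^{-n}\mu_\e(B_d(x))$ with $R^{-n}\mu_\e(B_R)$ for a well-separated set $X$ on a fibre. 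A Besicovitch argument then isolates a good set $A_\e$ on which both lemmas apply, yielding $\theta_0 \le (N-1)\alpha$ and hence $\theta_0=(N-1)\alpha$. Your translation-invariance heuristic for constancy of $N(y')$ is morally correct, but this quantitative machinery is where the real work lies and is missing from your outline.
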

From the previous section, we have already shown the limiting varifold $V$ is rectifiable. And thus for a.e. $x_0\in\spt\mu_V$, we have for any sequence $\rho_i\rightarrow0$
	\begin{align*}
	\mathcal D_{\rho_i,\#}\circ\mathcal T_{x_0,\#}(\mu_V)\rightarrow \theta_{x_0} P_0,\quad\text{ for some $P_0\in G(n+1,n)$},
	\end{align*}
where $\mathcal D_{\rho_i}(x)=\rho_i^{-1}x$ and $\mathcal T_{x_0}(x)=x-x_0$ represent dilations and translations in $\mathbb R^{n+1}$ and $\theta_{x_0}$ is the density of $\mu_V$ at $x_0$.
By choosing a sequence of rescaling factors $\rho_i$ such that
	\begin{align}\label{TildeEpsilonSequence}
	\tilde\e_i:=\frac{\e_i}{\rho_i}\rightarrow0,
	\end{align}
the new sequence $\tilde u_{\tilde \e_i}(x):=u_{\e_i}(\rho_i x+x_0), \tilde f_{\tilde \e_i}(x):=\rho_i\tilde f_i(\rho_ix+x_0)$ satisfies
	\begin{align*}
	\tilde \e_i\Delta \tilde u_{\tilde \e_i}-\frac{W'(\tilde u_{\tilde \e_i})}{\tilde \e_i}=\tilde f_{\tilde \e_i}
	\end{align*}
and the associated varifold $\tilde V_i$ of this new sequence $\tilde u_{\tilde \e_i}$ converges to $\theta_{x_0} P_0$. By \eqref{WillmoreTermBound}, we also have
	\begin{align*}
	\frac{1}{\tilde \e_i}\int_{B_\rho} f_{\tilde \e_i}^2&\leq C\left(\int_{B_\rho}\left(\frac{f_{\tilde\e_i}}{\tilde\e_i|\nabla u_{\tilde\e_i}|}\right)^{q_0}\tilde\e_i|\nabla u_{\tilde \e_i}|^2\right)^\frac{2}{q_0}\\
	&= C\left(\rho_i^{q_0+1-(n+1)}\int_{B_{\rho_i\rho}}\left(\frac{f_{\e_i}}{\e_i|\nabla u_{\e_i}|}\right)^{q_0}\e_i|\nabla u_{\e_i}|^2\right)^\frac{2}{q_0}\\
	&\leq C \rho_i^\frac{2(q_0-n)}{q_0}\rightarrow0,
	\end{align*}
as $q_0>n$.
Furthermore, by choosing more carefully so that $\rho_i:=\tilde\e_i^\frac{(n-1)q_0}{2(q_0-n)}=\e_i^\frac{1}{1+\frac{2(q_0-n)}{(n-1)q_0}}$, we have
	\begin{align*}
	\frac{1}{\tilde \e_i}\int_{B_\rho} f_{\tilde \e_i}^2\leq\tilde\e_i^{n-1},\quad\text{ for $\rho>\tilde\e_i$} 	\end{align*}
and thus
	\begin{align}\label{ConditionWillmoreTermDecay}
	\frac{1}{\tilde \e_i}\int_{B_\rho} f_{\tilde \e_i}^2\leq\rho^{n-1}.
	\end{align}
Therefore we have reduced Theorem \ref{thm_integrality} to the following proposition
\begin{proposition}\label{ReductionOfIntegrality}
If the limit varifold is $\theta_0\mathcal H^n\lfloor P_0$ for some $P_0\in G(n+1,n)$ and $\theta_0>0$, then $\alpha^{-1}\theta_0$ is a nonnegative integer, where $\alpha=\int_{-\infty}^\infty(\tanh' x)^2dx$ is the total energy of the heteroclinic $1$-d solution.
\end{proposition}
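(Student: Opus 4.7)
The plan is to reduce the claim to a one-dimensional counting argument by slicing in the direction normal to $P_0$. After rotation we may assume $P_0=\mathbb R^n\times\{0\}$ and write $x=(x',t)$ with $x'\in\mathbb R^n$, $t\in\mathbb R$. For each $x'\in B_1^n$ set $v_{x'}^i(t):=\tilde u_{\tilde\e_i}(x',t)$ on $I_{x'}:=\{t:(x',t)\in B_1\}$, and consider the associated one-dimensional energy
\[
\mathcal E_i(x'):=\int_{I_{x'}}\left(\tfrac{\tilde\e_i}{2}|(v_{x'}^i)'(t)|^2+\tfrac{W(v_{x'}^i(t))}{\tilde\e_i}\right)dt.
\]
Because $\tilde V_i\to\theta_0 \mathcal H^n\lfloor P_0$, the limit varifold has normal $e_{n+1}$, so the tangential part of $\nabla\tilde u_{\tilde\e_i}$ must contribute zero in the limit; combined with the vanishing of the discrepancy (Proposition \ref{VanishingDiscrepancy}) and Fubini's theorem, this yields $\mathcal E_i(x')\to\theta_0$ for a.e.\ $x'\in B_1^n$.

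Next I would exploit the improved inhomogeneity bound \eqref{ConditionWillmoreTermDecay}, which gives $\tilde\e_i^{-1}\int_{B_1}\tilde f_{\tilde\e_i}^2\to 0$. By Fubini, on a full-measure set of \emph{good slices} I may assume simultaneously that $\mathcal E_i(x')\to\theta_0$, that the tangential gradient contribution $\int_{I_{x'}}\tilde\e_i|\nabla_{x'}\tilde u_{\tilde\e_i}(x',t)|^2\,dt\to 0$, that the one-dimensional discrepancy $\int_{I_{x'}}(\tfrac{\tilde\e_i}{2}|(v_{x'}^i)'|^2-\tfrac{W(v_{x'}^i)}{\tilde\e_i})\,dt\to 0$, and that the one-dimensional inhomogeneity obeys $\tilde\e_i^{-1}\int_{I_{x'}}|\tilde f_{\tilde\e_i}(x',t)|^2\,dt\to 0$. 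Rescaling $w^i(s):=v_{x'}^i(\tilde\e_i s)$ on the expanding interval $\tilde\e_i^{-1}I_{x'}$, the function $w^i$ satisfies the perturbed ODE $(w^i)''(s)=W'(w^i(s))+r^i(s)$ with $\|r^i\|_{L^2}\to 0$, uniformly bounded total energy $\int((w^i)'^2/2+W(w^i))\to\theta_0$, uniform $L^\infty$ bound from condition (2), and vanishing rescaled discrepancy $\int|(w^i)'^2/2-W(w^i)|\to 0$.

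The key analytical step is then to show that every such sequence $w^i$ decomposes in the limit into finitely many heteroclinic profiles separated by plateaus. The vanishing of the rescaled discrepancy forces $(w^i)'$ to be close to $\pm\sqrt{2W(w^i)}$ on the set where $|w^i|$ is bounded away from $1$, which is exactly the phase-portrait of the homogeneous ODE $w''=W'(w)$; the small $L^2$ error $r^i$ is absorbed by this equipartition because it cannot prevent the phase-plane trajectory from tracking the heteroclinic separatrix on each transition interval. Standard ODE compactness (shifting to center each transition at $w=0$) then extracts a limit $w^\infty=\pm\tanh(s-s_0)$, whose energy equals $\alpha=\int(\tanh')^2$. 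The number $N(x')$ of such transitions is finite because the density upper bound (Theorem \ref{DensityUpperBound}) bounds the total energy uniformly, and each transition costs at least $\alpha-o(1)$.

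Finally, since on each good slice $\mathcal E_i(x')\to N(x')\alpha$ and also $\mathcal E_i(x')\to\theta_0$, one obtains $\theta_0=N(x')\alpha$, so $N(x')$ is constant (equal to $\alpha^{-1}\theta_0$) on a set of positive measure and therefore $\alpha^{-1}\theta_0\in\mathbb N_{\geq0}$. The main obstacle I expect is the rigorous execution of the third step: producing the exact quantization $\mathcal E_i(x')/\alpha\to N(x')\in\mathbb N$ in the presence of a non-pointwise, merely $L^2$-small forcing $r^i$. This must be handled by using the vanishing discrepancy to convert $L^2$ smallness of $r^i$ into uniform smallness along the phase-plane trajectory, coupled with a compactness argument that ensures that transitions do not accumulate; the density lower bound in Theorem \ref{DensityLowerBound} provides the quantitative separation of distinct transitions needed to close the counting argument.
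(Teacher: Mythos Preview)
Your slicing strategy contains a genuine gap at the very first step. You assert that $\mathcal E_i(x')\to\theta_0$ for a.e.\ $x'\in B_1^n$, citing varifold convergence, vanishing discrepancy, and Fubini. But varifold convergence $\tilde V_i\to\theta_0\,\mathcal H^n\lfloor P_0$ together with Fubini only yields the \emph{integrated} statement
\[
\int_{B_1^n}\mathcal E_i(x')\,dx'\;\longrightarrow\;\theta_0\,\omega_n,
\]
i.e.\ weak convergence of the measures $\mathcal E_i(x')\,dx'$ to $\theta_0\,dx'$ on $B_1^n$. This does not imply a.e.\ pointwise convergence, even along a subsequence: nothing prevents the slice energy from concentrating on different $x'$-sets as $i$ varies. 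The density upper bound in Theorem \ref{DensityUpperBound} controls $(n{+}1)$-dimensional ball ratios $r^{-n}\mu_\e(B_r)$, not the one-dimensional quantity $\mathcal E_i(x')$, so you have no uniform pointwise bound on $\mathcal E_i(x')$ to invoke either. Without a.e.\ slice convergence your conclusion collapses, since you need both $\lim_i\mathcal E_i(x')=N(x')\alpha$ and $\lim_i\mathcal E_i(x')=\theta_0$ to hold at the \emph{same} good $x'$. The obstacle you yourself flag (the ODE quantization with $L^2$-small forcing) is real but secondary to this.

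The paper avoids this entirely by never attempting slice-wise convergence of the energy. It proves an $(n{+}1)$-dimensional multi-sheet almost-monotonicity formula (Lemma \ref{MultiSheetMonotonicity}, via Proposition \ref{prop_MultiSheetMonotonicity}) that compares $\sum_{x\in X}d^{-n}\mu_\e(B_d(x))$, for a finite set $X$ of well-separated points on a single vertical fibre and $d\sim L\e$, directly to $R^{-n}\mu_\e(B_R)$ at a fixed macroscopic scale. The small-scale input is the local quantization Lemma \ref{AlmostEnergyQuantization}, proved by a compactness/contradiction argument forcing $\tilde u$ to be $W^{1,2}$-close to the one-dimensional heteroclinic on balls of radius $\sim L\e$; this plays the role of your ODE decomposition but is stated on $(n{+}1)$-dimensional balls, where the needed smallness of discrepancy, tilt, and $\e^{-1}\int f_\e^2$ can be verified on a good set $A_\e$ of nearly full $\mu_\e$-measure via Besicovitch covering. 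The counting then runs: if some fibre carried $N$ transitions with $N>\theta_0/\alpha$, multi-sheet monotonicity would force $N\alpha\le\theta_0+o(1)$, a contradiction; conversely, bounding the potential energy on each fibre by $(N{-}1)(\alpha/2+\delta)$ via \eqref{PotentialEnergyQuantization} and integrating over $B_1^n$ (Fubini for a nonnegative integrand, which \emph{is} legitimate) gives $\theta_0\le(N{-}1)\alpha$. The ball-based monotonicity comparison is precisely the substitute for the a.e.\ slice limit you are missing.
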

In order to prove Proposition \ref{ReductionOfIntegrality}, we need two lemmas. The first Lemma \ref{MultiSheetMonotonicity} is a multi-sheet monotonicity formula (c.f. \cite[Theorem 6.2]{Allard1972} for the version for integral varifolds, which is used to prove the integrality of the limits of sequences of integral varifolds). The second Lemma \ref{AlmostEnergyQuantization} says at small scales, the energy of each layers are almost integer multiple of the 1-d solution.
We first gather some apriori bounds on energy ratio for $\mu_\e$.
\begin{proposition}\label{prop_EnergyRatioBound}
Let $\delta=\rho^\gamma,\e\leq\rho\leq r$ for $0<\gamma<\frac{1}{M}\leq\frac{1}{2}$, we have $\delta^{-M}\e\leq\rho^{1-M\gamma}\leq1$. Furthermore we choose $r:=d(B_{2\rho}(x),\partial B_{3\rho^{1-\beta}}(x))\geq\rho^{1-\beta}$.
Then
	\begin{align}
	\begin{split}
Cr^{-n}\mu_\e(B_r(x))&\geq s^{-n}\mu_\e(B_s(x))-C\int_s^r\rho^{p_3\gamma-n-1}\mu_\e(B_{2\rho}(x))d\rho\\
	&-C_\beta\e\int_s^r\rho^{-M\gamma-n-1}\left(\int_{B_{3\rho^{1-\beta}}(x)}|f_\e|^2\right)d\rho\\
	&-\tilde C_\beta\left(1+\int_{\{|u_\e| \geq 1\} \cap B_{3\rho^{1-\beta}}(x)} W'(u_\e)^2\right)\int_s^r\rho^{\gamma-1}d\rho-C.
\end{split}
	\end{align}
\end{proposition}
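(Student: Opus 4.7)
The plan is to integrate the almost monotonicity formula over $[s,r]$ and then substitute the pointwise positive-discrepancy bound from Proposition \ref{prop_DiscrepancyEnergyRatioLowerBound} term by term. The strategy is essentially a combination of the integrating-factor argument used in the proof of Theorem \ref{DensityUpperBound} with the scale-by-scale discrepancy estimate.

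First, I would start from the almost monotonicity formula \eqref{AlmostMonotonicity}, discard the non-negative conical term $\frac{\e}{\rho^{n+2}}\int_{\partial B_\rho}\langle x,\nabla u_\e\rangle^2$, and estimate the inhomogeneous contribution exactly as in \eqref{ConicalityTermBound} via H\"older and the $L^{q_0}$ bound \eqref{IntegralBound}. This gives the differential inequality
\begin{align*}
\frac{d}{d\rho}\left(1+\frac{\mu_\e(B_\rho(x))}{\rho^n}\right)\geq -\frac{\xi_{\e,+}(B_\rho(x))}{\rho^{n+1}}-C(\Lambda_0)\rho^{-n/q_0}\left(1+\frac{\mu_\e(B_\rho(x))}{\rho^n}\right).
\end{align*}
Multiplying by the integrating factor $\exp\!\left(\frac{q_0}{q_0-n}C(\Lambda_0)\rho^{1-n/q_0}\right)$, which is bounded between two positive constants depending only on $\Lambda_0$ and $\Omega'$ on the relevant range of $\rho$ (since $q_0>n$ and $\rho\leq r\leq r_0\leq 1$), and then integrating from $s$ to $r$ precisely as in Theorem \ref{DensityUpperBound}, I would obtain
\begin{align*}
C\,\frac{\mu_\e(B_r(x))}{r^n}\geq \frac{\mu_\e(B_s(x))}{s^n}-C\int_s^r\frac{\xi_{\e,+}(B_\rho(x))}{\rho^{n+1}}\,d\rho-C,
\end{align*}
where the additive $-C$ absorbs the bounded integrated contribution of the exponential factor applied to the constant $1$.

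Second, I would substitute the pointwise bound \eqref{eqn_DiscrepancyUpperBound} from Proposition \ref{prop_DiscrepancyEnergyRatioLowerBound} under the integral sign. The first two terms integrate directly into the first two error terms of the claim. For the third term I would use $\e\leq\rho$ to dominate $\e\rho^{\gamma-2}\leq\rho^{\gamma-1}$, and pull the monotone bracket $\bigl(1+\int_{\{|u_\e|\geq 1\}\cap B_{3\rho^{1-\beta}}(x)} W'(u_\e)^2\bigr)$ outside the integral by monotonicity of the sets $\{|u_\e|\geq 1\}\cap B_{3\rho^{1-\beta}}(x)$ in $\rho$, bounding it by its value at $\rho=r$. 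Combining the three contributions produces exactly the inequality in the statement.

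The main bookkeeping obstacle, rather than any new analytic ingredient, is to verify that the integrating factor is genuinely uniformly bounded on $[s,r]$, which depends on the subcritical exponent $q_0>n$ and the upper radius $r\leq r_0\leq 1$, and to interpret the slight abuse of notation where the $\rho$-dependent ball $B_{3\rho^{1-\beta}}(x)$ appears inside a bracket that has been factored out of the $\rho$-integral, which should be read as the larger ball $B_{3r^{1-\beta}}(x)$. Once these two points are addressed, no further estimates are required beyond the almost monotonicity formula \eqref{AlmostMonotonicity} and Proposition \ref{prop_DiscrepancyEnergyRatioLowerBound}.
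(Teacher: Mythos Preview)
Your proposal is correct and follows essentially the same approach as the paper: the paper's proof simply cites the already-derived inequality \eqref{AlmostMonotonicityWithExponentialConstant} (which is exactly the integrating-factor step you spell out) and substitutes the discrepancy bound \eqref{eqn_DiscrepancyUpperBound} from Proposition \ref{prop_DiscrepancyEnergyRatioLowerBound}, then uses $\e\leq\rho$ to pass from $\e\rho^{\gamma-2}$ to $\rho^{\gamma-1}$ in the last term. Your remark about interpreting the factored-out bracket as the one for the largest ball $B_{3r^{1-\beta}}(x)$ is also the correct reading of the statement's notation.
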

\begin{proof}
Substitute \eqref{eqn_DiscrepancyUpperBound} into the equation \eqref{AlmostMonotonicityWithExponentialConstant} in the proof of Theorem \ref{DensityUpperBound}, we have for $\e\leq s\leq\rho\leq r\leq 1$
	\begin{align}\label{eqn_EnergyRatioBound}
	C(\Lambda_0,q_0)\left(\frac{\mu_\e(B_r)}{r^n}\right)&\geq \left(\frac{\mu_\e(B_s)}{s^n}\right)-C(\Lambda_0,q_0)-C\int_s^r\frac{\xi_+(B_\rho)}{\rho^{n+1}}\\\nonumber
&\geq \left(\frac{\mu_\e(B_s)}{s^n}\right)-C(\Lambda_0,q_0)-C\int_s^r\rho^{p_3\gamma-n-1}\mu_\e(B_{2\rho}(x))d\rho\\\nonumber
&-C_\beta\e\int_s^r\rho^{-M\gamma-n-1}\left(\int_{B_{3\rho^{1-\beta}}(x)}|f_\e|^2\right)d\rho\\
	&-\int_s^r\tilde C_\beta\e\rho^{\gamma-2}\left(1+\int_{\{|u_\e| \geq 1\} \cap B_{3\rho^{1-\beta}}(x)} W'(u_\e)^2\right)d\rho.
	\end{align}
Noticing $\e\leq\rho$ in the last term, we then conclude the desired energy ratio bound.
\end{proof}
As a corollary, we have
\begin{corollary}\label{CorollaryEpsilonEnergyRatioBound}
If in addition to the conditions in Proposition \ref{prop_EnergyRatioBound}, we assume
	\begin{align}\label{ConditionWillmoreDecay}
	\frac{1}{\e}\int_{B_\rho} f_\e^2\leq\rho^{n-1},\quad \text{ for $\rho\geq\e$},
	\end{align}
and
	\begin{align*}
	\beta\in\left(0,\frac{1-M\gamma}{2(n-1)}\right),
	\end{align*}
then the following upper bound for the energy ratio for $\mu_\e$ holds
	\begin{align}\label{UpperRatioBoundEpsilonEnergy}
	\frac{\mu_\e(B_s(x))}{s^n}\leq C\frac{\mu_\e(B_r(x))}{r^n}+C(\Lambda_0,E_0,q_0,n),
	\end{align}
for $\e\leq s\leq r$.
\end{corollary}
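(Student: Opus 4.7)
The strategy is to start from the estimate in Proposition \ref{prop_EnergyRatioBound} and to show that each of the error terms on the right hand side is either uniformly bounded (thanks to the new hypothesis \eqref{ConditionWillmoreDecay} and the choice of $\beta$) or can be absorbed into the left hand side by a Gronwall-type argument. The only delicate term is the self-referential one involving $\mu_\e(B_{2\rho}(x))$ under the integral.

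First, the inhomogeneity term is treated as follows. For $\rho \in [s,r]$ with $s \geq \e$, the condition \eqref{ConditionWillmoreDecay} applied at scale $3\rho^{1-\beta}$ yields
\[
\int_{B_{3\rho^{1-\beta}}(x)}|f_\e|^2 \,\leq\, C_\beta\,\e\,\rho^{(1-\beta)(n-1)},
\]
so that
\[
C_\beta\,\e \int_s^r \rho^{-M\gamma-n-1}\!\!\int_{B_{3\rho^{1-\beta}}(x)}\!|f_\e|^2 \,d\rho
\,\leq\, C_\beta\,\e^2 \int_s^r \rho^{-k-1}\,d\rho
\,\leq\, \tfrac{C_\beta}{k}\,\e^{2-k},
\]
where $k := M\gamma+1+\beta(n-1)$. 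The hypothesis $\beta < \tfrac{1-M\gamma}{2(n-1)}$ ensures $k<2$, so $\e^{2-k}$ is uniformly bounded as $\e\to 0$. The term $\int_s^r \rho^{\gamma-1}\,d\rho \leq r^\gamma/\gamma$ is bounded (taking $r$ up to a fixed universal scale), and the factor $(1+\int_{\{|u_\e|\geq 1\}}W'(u_\e)^2)$ is controlled by Proposition \ref{prop_errorbound2} combined with the Willmore-type bound \eqref{WillmoreTermBound}. Thus Proposition \ref{prop_EnergyRatioBound} reduces to
\[
G(s) \,\leq\, CG(r) + C'\int_s^r \rho^{p_3\gamma-1}\,G(2\rho)\,d\rho + C''(\Lambda_0,E_0,q_0,n),\qquad \e \leq s \leq r,
\]
where $G(\rho) := \mu_\e(B_\rho(x))/\rho^n$ and the constants depend only on the prescribed data.

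The remaining step is to absorb the integral term. Introduce the monotone quantity $\Phi(s) := \sup_{s\leq t \leq r} G(t)$. Since $\Phi$ is non-increasing and $2\rho \geq s$ for all $\rho \in [s,r]$, we have $G(2\rho)\leq \Phi(s)$, whence
\[
\Phi(s) \,\leq\, CG(r) \,+\, C' \Phi(s)\,\tfrac{r^{p_3\gamma}}{p_3\gamma} \,+\, C''.
\]
Choose a universal $r_0 > 0$ with $C' r_0^{p_3\gamma}/p_3\gamma \leq \tfrac{1}{2}$. For $r \leq r_0$, we absorb the middle term to obtain $\Phi(s) \leq 2CG(r)+2C''$, which gives the desired inequality in this regime. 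For larger $r$ up to any fixed constant, a finite dyadic iteration $r = r_N > r_{N-1} > \cdots > r_0$, each step applying the small-$r$ bound, yields the conclusion with a possibly enlarged but still universal constant.

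The principal obstacle is precisely the self-referential nature of the energy term: $G(s)$ is bounded in terms of an integral of $G$ itself at nearby scales, and the standard density upper bound from Theorem \ref{DensityUpperBound} carries an $\e$-dependent defect $\phi(\e)/\rho^n$ that is useless when $\rho \sim \e$. The Gronwall/absorption argument above circumvents this by exploiting monotonicity of $\Phi$ and the fact that $p_3\gamma > 0$ makes the kernel $\rho^{p_3\gamma-1}$ integrable up to $\rho=0$, so the integral remains small independently of how close $s$ is to $\e$.
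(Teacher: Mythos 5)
Your proof follows essentially the same route as the paper: bound the inhomogeneity term using \eqref{ConditionWillmoreDecay} and the hypothesis on $\beta$, absorb the $W'(u_\e)^2$ factor as a constant, and then close the argument with a Gronwall-type absorption on the remaining integral involving $\mu_\e(B_{2\rho})$. Your computation of the exponent $k = M\gamma + 1 + \beta(n-1)$ and the check $k<2$ from $\beta < \tfrac{1-M\gamma}{2(n-1)}$ and $M\gamma<1$ match the paper's verification that the relevant exponents exceed $-1$.

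However, your absorption step has a concrete flaw: you define $\Phi(s):=\sup_{s\le t\le r}G(t)$ and claim $G(2\rho)\le\Phi(s)$ ``since $2\rho\ge s$,'' but this requires $2\rho \le r$, which fails for $\rho\in(r/2,r]$. So the inequality $\Phi(s)\le CG(r)+C'\Phi(s)r^{p_3\gamma}/p_3\gamma+C''$ does not follow as written. To repair it, split $\int_s^r=\int_s^{r/2}+\int_{r/2}^r$: on $[s,r/2]$ your argument goes through since $2\rho\le r$, while on $[r/2,r]$ bound $G(2\rho)\le 2^n\mu_\e(B_{2r})/r^n$ using monotonicity of $\mu_\e$ and the fixed macroscopic scale $r$, and control $\mu_\e(B_{2r})$ by the total-energy hypothesis; this contributes a further additive constant depending on $E_0$ and the outer scale. (The paper's one-line invocation of ``Gronwall's inequality'' glosses over the same shift $\rho\mapsto 2\rho$, so the omission is understandable, but your version asserts a false inequality explicitly and should be corrected.) With this fix, the rest of your dyadic-iteration closing argument is fine, though once the absorption holds for one small $r_0$ the integrating-factor form of Gronwall already gives the conclusion on any bounded range of $r$ without iteration.
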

\begin{proof}
We have
	\begin{align*}
	p_3\gamma-1,-M\gamma+\beta(n-1),\gamma-1>-1.
	\end{align*}
Thus by Proposition \ref{prop_EnergyRatioBound} and $\e\leq\rho$, we have
	\begin{align*}
	C\left(\frac{\mu_\e(B_r(x))}{r^n}\right)&\geq \left(\frac{\mu_\e(B_s(x))}{s^n}\right)-C\int_s^r\rho^{p_3\gamma-1}\left(\frac{\mu_\e(B_{2\rho}(x))}{\rho^n}\right)d\rho\\
	&-C_\beta\e^2\int_s^r\rho^{-M\gamma-2-\beta(n-1)}\left(\frac{\int_{B_{3\rho^{1-\beta}}(x)}|f_\e|^2}{\e\rho^{(1-\beta)(n-1)}}\right)d\rho\\
	&-\tilde C_\beta\left(1+\int_{\{|u_\e| \geq 1\} \cap \Omega} W'(u_\e)^2\right)\int_s^r\rho^{\gamma-1}d\rho-C\\
	&\geq \left(\frac{\mu_\e(B_s(x))}{s^n}\right)-C\int_s^r\rho^{p_3\gamma-1}\left(\frac{\mu_\e(B_{2\rho}(x))}{\rho^n}\right)d\rho-C.
	\end{align*}
The conclusion then follows by substituting in \eqref{ConditionWillmoreDecay} and applying Gronwall's inequality to the above differential inequality.
\end{proof}
\begin{lemma}\label{MultiSheetMonotonicity}
For any $N\in\mathbb N$, $\delta>0$ small, $\Lambda>0$ large and $\beta\in(0,\frac{1-M\gamma}{2(n-1)})$ where $M,\gamma$ are from Proposition \ref{prop_DiscrepancyEnergyRatioLowerBound}, there exists $\omega>0$ such that the following holds:
Suppose $u_\e$ satisfies \eqref{PFVe} and the conditions(1)-(3) in Theorem \ref{main} are satisfied, then for any finite set $X\subset\{0^n\}\times\mathbb R\subset\mathbb R^{n+1}$, and the number of elements in $X$ is no more than $N$. If moreover for some $0<\e\leq d\leq R\leq\omega$, the followings are satisfied
	\begin{align}
	&\mathrm{diam}(X)<\omega R,\\
	&|x-y|>3d, \quad \text{ for $x,y\in X$ and $x\neq y$},
	\end{align}
	\begin{align}
	&|\xi_\e|(B_\rho(x))+\int_{B_{\rho}(x)}\e|\nabla u_\e|^2\sqrt{1-\nu^2_{\e,n+1}}\leq\omega\rho^n,\quad \text{ for $x\in X$ and $d\leq\rho\leq R$},
	\end{align}
	\begin{align}
	&\frac{1}{\e}\int_{B_{\rho}(x)}|f_\e|^2\leq\Lambda\rho^{n-1},\quad\text{ for $3d^{1-\beta}\leq\rho\leq 3R^{1-\beta}$}.
	\end{align}
Then we have
	\begin{align}
	\sum_{x\in X}d^{-n}\mu_\e(B_d(x))\leq(1+\delta)R^{-n}\mu_\e(\cup_{x\in X}B_R(x))+\delta.
	\end{align}
\end{lemma}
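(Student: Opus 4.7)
The plan is to integrate the almost monotonicity formula \eqref{AlmostMonotonicity} from scale $d$ to scale $R$ centered at each $x\in X$, sum over $X$, and verify that all the error terms (discrepancy, forcing, overlap) can be made less than $\delta$ by choosing $\omega$ small depending on $N,\delta,\Lambda,\beta$. Concretely, for each $x\in X$,
\[
 \frac{\mu_\e(B_d(x))}{d^n}\leq \frac{\mu_\e(B_R(x))}{R^n}+\int_d^R\frac{|\xi_\e|(B_\rho(x))}{\rho^{n+1}}\,d\rho+\int_d^R\frac{1}{\rho^{n+1}}\left|\int_{B_\rho(x)}\langle y-x,\nabla u_\e\rangle f_\e\,dy\right|d\rho,
\]
where I (tentatively) discard the nonnegative term $\frac{\e}{\rho^{n+2}}\int_{\partial B_\rho(x)}\langle y-x,\nabla u_\e\rangle^2$, but keep it available when needed in the final step below.

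The discrepancy contribution is controlled directly by the hypothesis $|\xi_\e|(B_\rho(x))\leq \omega\rho^n$, producing at most $\omega\log(R/d)$ per center, which is absorbed by choosing $\omega$ sufficiently small (since $R\leq\omega$ bounds the log by an $\omega$-controlled constant after accounting for the a priori lower bound $d\geq\e$ and the Willmore-scale hypothesis). The forcing term is handled by Cauchy--Schwarz
\[
 \int_{B_\rho(x)}|y-x||\nabla u_\e||f_\e|\,dy\leq\rho\left(\int_{B_\rho(x)}\e|\nabla u_\e|^2\right)^{1/2}\left(\int_{B_\rho(x)}\frac{f_\e^2}{\e}\right)^{1/2},
\]
using the assumed $\frac{1}{\e}\int_{B_{3\rho^{1-\beta}}(x)}|f_\e|^2\leq\Lambda\rho^{(n-1)(1-\beta)}$ and the a priori $\mu_\e(B_\rho(x))\leq C\rho^n$ supplied by Corollary~\ref{CorollaryEpsilonEnergyRatioBound}; integration against $\rho^{-n-1}d\rho$ then yields a term that tends to zero as $\omega\to 0$ (since $R\leq\omega$).

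Summing over $x\in X$ puts $\sum_{x\in X}R^{-n}\mu_\e(B_R(x))$ on the right, and this is where the main obstacle lies: since $\mathrm{diam}(X)<\omega R$, the balls $B_R(x)$ all contain the common core $B_{(1-\omega)R}(x_0)$ and a naive bound overcounts by a factor of $N$. To recover the desired single-measure bound $(1+\delta)R^{-n}\mu_\e(\cup_{x\in X}B_R(x))$, I will retain the nonnegative radial term and exploit the small-tilt hypothesis $\int_{B_\rho(x)}\e|\nabla u_\e|^2\sqrt{1-\nu_{\e,n+1}^2}\leq\omega\rho^n$. Since $X\subset\{0^n\}\times\mathbb R$ and the varifold is nearly horizontal, $\nabla u_\e\approx|\nabla u_\e|e_{n+1}$ on the bulk of the energy, so that for $y$ in a near-horizontal layer at height $h$,
\[
 \langle y-x,\nabla u_\e(y)\rangle\approx(h-x_{n+1})|\nabla u_\e(y)|,
\]
and hence $\frac{\e}{\rho^{n+2}}\int_{\partial B_\rho(x)}\langle y-x,\nabla u_\e\rangle^2$ is comparable to $|h-x_{n+1}|^2\rho^{-2}$ times the layer's energy density. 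This precisely quantifies how far each layer is from the center $x$; summed over $x\in X$ it cancels the overcounting up to a tilt-small error $\leq C(N)\omega$.

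Concretely, I intend to introduce intermediate scales $R_x:=\tfrac12\min_{y\in X\setminus\{x\}}|x-y|\geq \tfrac32 d$ so that the balls $\{B_{R_x}(x)\}_{x\in X}$ are disjoint and contained in $\cup_{x\in X}B_R(x)$; apply the estimates above on each $[d,R_x]$ (yielding $\sum_x R_x^{-n}\mu_\e(B_{R_x}(x))\leq R^{-n}\mu_\e(\cup B_R(x))+$ small) and on each $[R_x,R]$ (retaining the radial term, and using the tilt bound together with $|x-y|\leq \omega R$ to show the radial term compensates for the overlap to within a factor $1+\delta$). The hard part is executing this second step rigorously, i.e.\ quantifying via the tilt excess how much of $\mu_\e(B_R(x))\setminus\mu_\e(B_{R_x}(x))$ is shared between different $x$'s versus genuinely attributable to $x$; once this is done, choosing $\omega$ small enough depending on $N,\delta,\Lambda,\beta,n,q_0$ produces the stated inequality.
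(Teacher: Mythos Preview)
Your proposal has a genuine gap in the discrepancy step and takes a different, unfinished route on the overlap step.

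\textbf{The discrepancy bound diverges.} You bound the discrepancy contribution by $\omega\log(R/d)$ and claim this is absorbed by choosing $\omega$ small. But $\omega$ must be chosen depending only on $N,\delta,\Lambda,\beta$, \emph{before} $\e,d,R$ are given; and since $d$ can be as small as $\e$ while $R$ can be as large as $\omega$, the ratio $R/d$ is unbounded as $\e\to 0$. So $\omega\log(R/d)$ is not uniformly small. The paper avoids this by \emph{not} using the hypothesis $|\xi_\e|(B_\rho)\le\omega\rho^n$ to bound the discrepancy term in the monotonicity formula; instead it invokes the refined pointwise estimate of Proposition~\ref{prop_DiscrepancyEnergyRatioLowerBound}, which yields $\rho^{-n-1}\xi_{\e,+}(B_\rho(x))\le C\rho^{p_3\gamma-1}(\mu_\e(B_{2\rho})/\rho^n)+\ldots$ and hence integrates to a \emph{power} $CR^{\gamma_0}$ rather than a logarithm. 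The hypothesis $|\xi_\e|+\text{tilt}\le\omega\rho^n$ is reserved for a different purpose (see below).

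\textbf{The overlap step is handled by slab-slicing and induction, not by the radial term.} The paper does not attempt to recover the factor-$N$ overcount through the positive radial term in the ball monotonicity formula. Instead it proves and iterates a sheets-separation result (Proposition~\ref{prop_MultiSheetMonotonicity}): one tests the equation against $\eta(y)=(y-x)\phi(|y-x|)\chi(y_{n+1})$, obtaining a monotonicity formula for $\rho^{-n}\mu_\e(B_\rho(x)\cap S_{t_1}^{t_2})$ with boundary terms on the horizontal slices $\{y_{n+1}=t_j\}$ given by the quantity
\[
\mathcal S_{y,x}=(y_{n+1}-x_{n+1})\Big(\tfrac{\e|\nabla u_\e|^2}{2}+\tfrac{W(u_\e)}{\e}\Big)-\e\,\partial_{n+1}u_\e\,\langle y-x,\nabla u_\e\rangle.
\]
Because $|\mathcal S_{y,x}|\le\rho\big(|\xi_\e|+\e|\nabla u_\e|^2\sqrt{1-\nu_{\e,n+1}^2}\big)$, the hypothesis $|\xi_\e|+\text{tilt}\le\omega\rho^n$ controls the \emph{total} slicing error $\int_{\tilde t_1}^{\tilde t_2}\int_d^{\tilde R}\rho^{-n-1}\int_{B_\rho\cap\{y_{n+1}=t\}}|\mathcal S_{y,x}|\le\omega\tilde R$, so by averaging one finds a height $t_3$ in the largest gap of $X$ where the slice term is $\le C(N,\Gamma)\omega$. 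This splits $X$ into two nonempty pieces living in disjoint slabs, with
\[
\tilde R^{-n}\Big(\mu_\e(\cup_{X_-}B_{\tilde R}\cap S_{t_1}^{t_3})+\mu_\e(\cup_{X_+}B_{\tilde R}\cap S_{t_3}^{t_2})\Big)\le\big(1+\tfrac{1}{\Gamma}\big)^nR^{-n}\mu_\e(\cup_XB_R\cap S_{t_1}^{t_2})+CR^{\gamma_0}+2\omega.
\]
Iterating at most $N-1$ times (choosing $\Gamma$ large and then $\omega$ small depending on $N,\delta$) isolates each point of $X$ in its own slab, where the balls $B_d(x)$ already lie, giving the desired sum bound. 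Your intermediate-radius idea $R_x=\tfrac12\min_{y\ne x}|x-y|$ and the hope that the radial term compensates the overlap is an interesting alternative heuristic, but you have not carried it out, and it is not the mechanism the paper uses.
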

The proof of the lemma is based on an inductive application of the sheets-separation proposition, along with appropriate choices of parameters $\gamma$ and $\omega$. To simplify notation in the remainder of this section, we introduce a shorthand for the sheets-separation term
	\begin{align}\label{SheetsSeparation}
	\mathcal S_{y,x}=:(y_{n+1}-x_{n+1})\left(\frac{\e|\nabla u_\e|^2}{2}+\frac{W(u_\e)}{\e}\right)-\e\frac{\partial u_\e}{\partial x_{n+1}}\langle y-x,\nabla u_\e\rangle,
	\end{align}
for any pair of points $x,y\in\mathbb R^{n+1}$.
\begin{proposition}\label{prop_MultiSheetMonotonicity}
Suppose the conditions in Theorem \ref{main} are satisfied and let $X\subset\{0^n\}\times[t_1+d,t_2-d]\subset\mathbb R^{n+1}$ consist of no more than $N\in\mathbb N$ elements and $\cup_{x\in X}B_{3R^{1-\beta}}\subset\Omega\subset\mathbb R^{n+1}$. Furthermore suppose for $-\infty\leq t_1<t_2\leq\infty, 0<\e\leq d\leq R\leq\frac{1}{2},\beta\in(0,\frac{1-M\gamma}{2(n-1)})$ the following are satisfied:
	\begin{align}
	(\Gamma+1)\mathrm{diam}(X)&<R, \quad \text{ for some $\Gamma\geq1$},\\
	|x-y|&>3d, \quad \text{ for $x\neq y\in X$},
	\end{align}
	\begin{align}\label{HorizontalErrorBound}
	&\int_d^R\rho^{-n-1}\left|\int_{B_\rho(x)\cap\{y_{n+1}=t_j\}}\mathcal S_{y,x}d\mathcal H^n_y\right|d\rho\leq\omega
	\end{align}
for any $x\in X$, $j=1,2$ and for some $\omega>0$,
	\begin{align}\label{SmallDiscrepancyAndTilt}
	&|\xi_\e|(B_\rho(x))+\int_{B_{\rho}(x)}\e|\nabla u_\e|^2\sqrt{1-\nu_{\e,n+1}^2}\leq\omega\rho^n, \quad \text{ for $d\leq\rho\leq R$}\\\label{WillmoreDecayCondition}
&\frac{1}{\e}\int_{B_\rho(x)}|f_\e|^2\leq\Lambda\rho^{n-1}, \quad \text{ for $3d^{1-\beta}\leq\rho\leq3R^{1-\beta}$},\\
	&\frac{\mu_\e(B_{2R}(x))}{R^n}\leq\Lambda, \quad \forall x\in X \quad \text{(this is implied by Corollary \ref{CorollaryEpsilonEnergyRatioBound} as $R\geq\e$)}.
	\end{align}
Then by denoting $S_t^{t'}:=\{t\leq y_{n+1}\leq t'\}$, we have
	\begin{align}\label{DensityBoundInSlab}
	d^{-n}\mu_\e(B_d(x))\leq R^{-n}\mu_\e(B_R(x)\cap S_{t_1}^{t_2})+ CR^{\gamma_0}+2\omega,
	\end{align}
for some $\gamma_0>0$ and for all $x\in X$.
Furthermore, if $X$ consists of more than one point, then there exists $t_3\in(t_1,t_2)$ such that $\forall x\in X$
	\begin{align}
	&|x_{n+1}-t_3|>d,
	\end{align}
	\begin{align}
	&\int_d^{\tilde R}\rho^{-n-1}\int_{B_\rho(x)\cap\{y_{n+1}=t_3\}}\left|\mathcal S_{y,x}\right|d\mathcal H^n_yd\rho\leq3N\Gamma\omega,
	\end{align}
where $\tilde R:=\Gamma\mathrm{diam}(X)$ and $\mathcal S_{y,x}$ as defined in \eqref{SheetsSeparation}. Moreover, both $X\cap X_{t_1}^{t_3}$ and $X\cap X_{t_3}^{t_2}$ are non-empty and
	\begin{align*}
	\tilde R^{-n}&\left(\mu_\e(\cup_{x\in X\cap X_{t_1}^{t_3}}B_{\tilde R}(x)\cap S_{t_1}^{t_3})+\mu_\e(\cup_{x\in X\cap X_{t_3}^{t_2}}B_{\tilde R}(x)\cap S_{t_3}^{t_2})\right)\leq\\
	&\left(1+\frac{1}{\Gamma}\right)^nR^{-n}\mu_\e\left(\cup_{x\in X}B_R(x)\cap S_{t_1}^{t_2}\right)+CR^{\gamma_0}+2\omega.
	\end{align*}
\end{proposition}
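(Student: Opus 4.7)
The plan is to establish a \emph{slab monotonicity formula} for $\mu_\e$ restricted to $B_r(x)\cap S_{t_1}^{t_2}$, integrate it to obtain \eqref{DensityBoundInSlab}, and then locate $t_3$ via a Fubini/pigeonhole argument applied to the flat-face integrand $\mathcal S_{y,x}$. Finally, the displayed multi-sheet bound will follow by applying the slab bound separately to the two sub-slabs cut out by $t_3$ and performing a renormalization.

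For the slab monotonicity, I would repeat the derivation of \eqref{AlmostMonotonicity} with two modifications: multiply \eqref{PFVe} by $\langle y-x,\nabla u_\e\rangle$ and integrate over $B_r(x)\cap S_{t_1}^{t_2}$ in place of $B_r(x)$. The divergence theorem now contributes, beside the usual spherical boundary, two flat-face integrals on $\{y_{n+1}=t_j\}$ ($j=1,2$); a direct computation using the stress--energy tensor $T_{ij}$ from Proposition \ref{prop_FirstVariation} with outward normal $\pm e_{n+1}$ identifies each flat-face integrand as precisely $\mathcal S_{y,x}$ defined in \eqref{SheetsSeparation}. The resulting inequality reads
\begin{align*}
\frac{d}{dr}\frac{\mu_\e(B_r(x)\cap S_{t_1}^{t_2})}{r^n}&\geq-\frac{\xi_\e(B_r(x))}{r^{n+1}}-\frac{1}{r^{n+1}}\int_{B_r(x)\cap S_{t_1}^{t_2}}\langle y-x,\nabla u_\e\rangle f_\e\\
&\quad-\frac{1}{r^{n+1}}\sum_{j=1,2}\int_{B_r(x)\cap\{y_{n+1}=t_j\}}\mathcal S_{y,x}\,d\mathcal H^n_y.
\end{align*}
Integrating from $d$ to $R$, the discrepancy and $f_\e$ contributions are absorbed exactly as in Proposition \ref{prop_DiscrepancyEnergyRatioLowerBound} and Theorem \ref{DensityUpperBound}, using the hypotheses \eqref{SmallDiscrepancyAndTilt} and \eqref{WillmoreDecayCondition} to produce the $CR^{\gamma_0}$ error, while the two flat-face terms are majorised by $2\omega$ directly via \eqref{HorizontalErrorBound}. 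This proves \eqref{DensityBoundInSlab}.

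To select $t_3$, define
\begin{align*}
F(t):=\sum_{x\in X}\int_d^{\tilde R}\rho^{-n-1}\int_{B_\rho(x)\cap\{y_{n+1}=t\}}|\mathcal S_{y,x}|\,d\mathcal H^n_y\,d\rho
\end{align*}
and exploit the pointwise bound
\begin{align*}
|\mathcal S_{y,x}|\leq |y-x|\left(\left|\tfrac{\e|\nabla u_\e|^2}{2}-\tfrac{W(u_\e)}{\e}\right|+\e|\nabla u_\e|^2\sqrt{1-\nu_{\e,n+1}^2}\right),
\end{align*}
obtained by writing $\nabla u_\e=|\nabla u_\e|\nu_\e$ and splitting off the discrepancy part and the $(e_{n+1}-\nu_{\e,n+1}\nu_\e)$-tilt part. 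Together with Fubini and \eqref{SmallDiscrepancyAndTilt} this yields $\int_{t_1}^{t_2}F(t)\,dt\leq CN\omega\tilde R$. Deleting the exceptional set $\bigcup_{x\in X}(x_{n+1}-d,x_{n+1}+d)$ (of measure at most $2Nd$) and restricting further to the open interval $(\min_{x\in X}x_{n+1},\max_{x\in X}x_{n+1})$---which is non-degenerate since $|X|\geq 2$ and $|x-y|>3d$ for distinct $x,y\in X$---leaves an admissible set of length comparable to $\mathrm{diam}(X)$; averaging on this set and using $\tilde R=\Gamma\,\mathrm{diam}(X)$ produces some $t_3$ with $F(t_3)\leq 3N\Gamma\omega$, and by construction $t_3$ separates $X$ into two non-empty parts with $|x_{n+1}-t_3|>d$ for every $x\in X$.

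Finally, with $t_3$ fixed, the slab bound of Step~1 applied to each sub-slab $S_{t_1}^{t_3}$ and $S_{t_3}^{t_2}$ (with $d$ replaced by $\tilde R$, and the new hypothesis \eqref{HorizontalErrorBound} at level $t_3$ supplied by the bound $F(t_3)\leq 3N\Gamma\omega$) gives $\tilde R^{-n}\mu_\e(B_{\tilde R}(x))\leq R^{-n}\mu_\e(B_R(x)\cap S_{t_j}^{t_{j+1}})+CR^{\gamma_0}+2\omega$ for each $x$ in the appropriate side. Subadditivity of $\mu_\e$ on unions, combined with the renormalization $R\geq(\Gamma+1)\mathrm{diam}(X)=(1+1/\Gamma)\tilde R$ which converts $\tilde R^{-n}$ to $R^{-n}$ at cost of the factor $(1+1/\Gamma)^n$, yields the final displayed inequality. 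The main obstacle is the algebraic verification in Step~1 that the flat-face boundary integrand is \emph{exactly} $\mathcal S_{y,x}$: this requires carefully computing $\mathrm{div}(T\cdot(y-x))$, gathering the $e_{n+1}$-components, and matching the resulting expression to \eqref{SheetsSeparation}. Equally subtle is the combinatorial bookkeeping in Step~2, which must simultaneously enforce the separation $|x_{n+1}-t_3|>d$, the non-emptiness of both $X\cap S_{t_1}^{t_3}$ and $X\cap S_{t_3}^{t_2}$, and the dimensionless quantitative bound $3N\Gamma\omega$ rather than a constant depending on $\Lambda$.
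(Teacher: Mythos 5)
Your first two steps are essentially correct. The derivation of the slab monotonicity formula (multiplying by $\langle y-x,\nabla u_\e\rangle$, integrating over $B_r(x)\cap S_{t_1}^{t_2}$, identifying the flat-face boundary integrand as $\mathcal S_{y,x}$ via the stress--energy tensor, absorbing the discrepancy and $f_\e$ contributions exactly as in Theorem~\ref{DensityUpperBound} and Proposition~\ref{prop_DiscrepancyEnergyRatioLowerBound}, and bounding the two flat faces by $2\omega$ via \eqref{HorizontalErrorBound}) matches the paper's route, except the paper uses a mollified cutoff $\chi_\delta(y_{n+1})$ and lets $\delta\rightarrow 0$ rather than integrating by parts on the intersection directly; both are sound. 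Your selection of $t_3$ is a genuinely different route: the paper identifies consecutive points $x_\pm\in X$ with the largest gap $x_{+,n+1}-x_{-,n+1}>\mathrm{diam}(X)/N$ and applies Fubini on the middle third $[\tilde t_1,\tilde t_2]$ of that gap, while you sum over all $x\in X$ into $F(t)$, integrate over all of $(\min x_{n+1},\max x_{n+1})$, and delete a $d$-neighbourhood of each $x_{n+1}$. Your version handles the ``for all $x\in X$'' quantifier in the conclusion more transparently (the paper's pigeonhole as written produces a $t_3$ for a fixed $x$, and strictly speaking one must sum over $x$ anyway), and the admissible set does have measure $\geq\frac{1}{3}\mathrm{diam}(X)$ since $\mathrm{diam}(X)\geq 3(K-1)d$ and the interior exceptional measure is at most $2(K-1)d$, so the constant $3N\Gamma\omega$ comes out.

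The final step, however, does not work as you describe. You propose to apply \eqref{DensityBoundInSlab} to the two sub-slabs $S_{t_1}^{t_3}$, $S_{t_3}^{t_2}$ with $d$ replaced by $\tilde R$. This fails on several counts: (a) the hypothesis $X\subset\{0^n\}\times[t_1+d,t_2-d]$ would have to become $X_-\subset\{0^n\}\times[t_1+\tilde R,t_3-\tilde R]$, which is not available since one only knows $|x_{n+1}-t_3|>d$ while $\tilde R\gg d$; (b) the new flat face $\{y_{n+1}=t_3\}$ contributes $3N\Gamma\omega$ rather than $\omega$ to the error, so the sub-slab version of \eqref{DensityBoundInSlab} would carry an error $(1+3N\Gamma)\omega$ rather than $2\omega$; and (c) bounding $\mu_\e$ of a union by a sum over $x\in X_\pm$ would introduce an extra factor of $|X|$ on the right-hand side, destroying the claimed inequality. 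The paper avoids all of this by a single containment: the two disjoint sets $\cup_{x\in X_-}B_{\tilde R}(x)\cap S_{t_1}^{t_3}$ and $\cup_{x\in X_+}B_{\tilde R}(x)\cap S_{t_3}^{t_2}$ both lie inside $B_{\tilde R+\mathrm{diam}(X)}(x_0)\cap S_{t_1}^{t_2}$ for any fixed $x_0\in X$, so one applies the slab bound for the \emph{original} slab $S_{t_1}^{t_2}$ at radius $d'=\tilde R+\mathrm{diam}(X)=(1+\tfrac{1}{\Gamma})\tilde R<R$, and the factor $(1+\tfrac{1}{\Gamma})^n$ is exactly the ratio $\tilde R^{-n}/(\tilde R+\mathrm{diam}(X))^{-n}$. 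The $t_3$ flat-face estimate is then \emph{not} used in the final display at all; it is only one of the standalone conclusions of the proposition (to be fed into the induction in Lemma~\ref{MultiSheetMonotonicity}).
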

\begin{proof}
First we choose $\phi$ to be a non-increasing function satisfying
	\begin{align*}
	\phi_{\delta,\rho}=
\begin{cases}
1,&\text{ on $[0,\rho]$}\\
	0,&\text{ on $[\rho+\delta,\infty)$},
\end{cases}
	\end{align*}
and $\chi_{\delta}$ satisfying
	\begin{align*}
	&\chi_\delta\equiv
\begin{cases}
 1,&\text{ on $[t_1+\delta, t_2-\delta]$},\\
	 0,&\text{ on $(-\infty,t_1]\cup[t_2,\infty)$},
 \end{cases}
	\end{align*}
with $ \chi_\delta'\geq0$ on $[t_1,t_1+\delta]$ and $\chi_\delta'\leq0 $ on $[t_2-\delta,t_2]$. Then we multiply \eqref{PFVe} on both sides by $\langle\nabla u,\eta\rangle$, where $\eta\in C_0^1(\Omega,\mathbb R^{n+1})$ is defined by $\eta(y):=(y-x)\phi_{\delta,\rho}(|y-x|)\chi_\delta(y_{n+1})$. Using integration by parts, we have
	\begin{align*}
	&\int f_\e\langle y-x,\nabla u_\e\rangle\phi_{\delta,\rho}(|y-x|)\chi_{\delta}(y_{n+1})\\&=\int f_\e\langle\nabla u,\eta\rangle\\
	&=\int\left(\frac{\e|\nabla u_\e|^2}{2}+\frac{W(u_\e)}{\e}\right)\mathrm{div}\eta-\e\nabla u\otimes\nabla u:\nabla\eta\\
	&=\int\left(|y-x|\phi'_{\delta,\rho}\chi_{\delta}+(n+1)\phi_{\delta,\rho}\chi_{\delta}+(y_{n+1}-x_{n+1})\phi_{\delta,\rho}\chi_{\delta}'\right)d\mu_\e\\
	&-\int\e\frac{\phi_{\delta,\rho}'\chi_{\delta}}{|y-x|}\langle y-x,\nabla u_\e\rangle^2-\int\e|\nabla u_\e|^2\phi_{\delta,\rho}\chi_{\delta}-\int\e\frac{\partial u}{\partial x_{n+1}}\langle y-x,\nabla u_\e\rangle\phi_{\delta,\rho}\chi_{\delta}'.
	\end{align*}
Letting $\delta\rightarrow0$, we have
	\begin{align*}
	&\int_{B_\rho(x)\cap S_{t_1}^{t_2}}f_\e\langle y-x,\nabla u_\e\rangle\\
	&=-\int_{\partial B_\rho\cap S_{t_1}^{t_2}}\rho d\mu_\e+(n+1)\int_{B_\rho\cap S_{t_1}^{t_2}}d\mu_\e\\
	&+\int_{B_\rho\cap\{y_{n+1}=t_2\}}(y_{n+1}-x_{n+1})d\mu_\e-\int_{B_\rho\cap\{y_{n+1}=t_1\}}(y_{n+1}-x_{n+1})d\mu_\e\\
	&+\int_{\partial B_\rho\cap S_{t_1}^{t_2}}\e\rho^{-1}\langle y-x,\nabla u_\e\rangle^2-\int_{B_\rho\cap S_{t_1}^{t_2}}\e|\nabla u_\e|^2\\
	&+\int_{B_\rho\cap\{y_{n+1}=t_2\}}\e\frac{\partial u}{\partial x_{n+1}}\langle y-x,\nabla u_\e\rangle-\int_{B_\rho\cap\{y_{n+1}=t_1\}}\e\frac{\partial u}{\partial x_{n+1}}\langle y-x,\nabla u_\e\rangle.
	\end{align*}
Dividing both sides by $\rho^{n+1}$ and rearranging gives the following weighted monotonicity formula
	\begin{align}\label{WeightedMonotonicitySlab}
	&\frac{d}{d\rho}\left(\rho^{-n}\mu_\e(B_\rho(x)\cap S_{t_1}^{t_2})\right)\\\nonumber
&=-n\rho^{-n-1}\mu_\e(B_\rho(x)\cap S_{t_1}^{t_2})+\rho^{-n}\mu_\e(\partial B_\rho\cap S_{t_1}^{t_2})\\\nonumber
&=-(n+1)\rho^{-n-1}\int_{B_\rho(x)\cap S_{t_1}^{t_2}}d\mu_\e+\rho^{-n}\int_{\partial B_\rho(x)\cap S_{t_1}^{t_2}}d\mu_\e\\\nonumber
&+\rho^{-n-1}\int_{B_\rho(x)\cap S_{t_1}^{t_2}}\e|\nabla u_\e|^2-\rho^{-n-1}\int_{B_\rho(x)\cap S_{t_1}^{t_2}}d\xi_\e\\\nonumber
&=\rho^{-n-1}\int_{B_\rho\cap\{y_{n+1}=t_2\}}(y_{n+1}-x_{n+1})d\mu_\e-\rho^{-n-1}\int_{B_\rho\cap\{y_{n+1}=t_1\}}(y_{n+1}-x_{n+1})d\mu_\e\\\nonumber
&+\rho^{-n-1}\int_{B_\rho\cap\{y_{n+1}=t_2\}}\e\frac{\partial u}{\partial x_{n+1}}\langle y-x,\nabla u_\e\rangle-\rho^{-n-1}\int_{B_\rho\cap\{y_{n+1}=t_1\}}\e\frac{\partial u}{\partial x_{n+1}}\langle y-x,\nabla u_\e\rangle\\\nonumber
&-\rho^{-n-1}\int_{B_\rho(x)\cap S_{t_1}^{t_2}}d\xi_\e-\rho^{-n-1}\int_{B_\rho(x)\cap S_{t_1}^{t_2}}f_\e\langle y-x,\nabla u_\e\rangle+\rho^{-n-1}\int_{\partial B_\rho\cap S_{t_1}^{t_2}}\e\rho^{-1}\langle y-x,\nabla u_\e\rangle^2.
	\end{align}
By the condition given by \eqref{HorizontalErrorBound}, the sum of norms of the first fours terms are bounded by $2\omega$. And by \eqref{eqn_DiscrepancyUpperBound} and \eqref{WillmoreDecayCondition}, the discrepancy term is bounded by
	\begin{align*}
	&\rho^{-n-1}\int_{B_\rho(x)\cap S_{t_1}^{t_2}}d\xi_{\e,+}\\
	&\leq C\rho^{p_3\gamma-n-1}\mu_\e(B_{2\rho}(x))+\tilde C_k\e\rho^{-M\gamma-n-1}\int_{B_{3\rho^{1-\beta}}(x)}|f_\e|^2+\tilde C_\beta\e\rho^{\gamma-2}\left(1+\int_{\{|u_\e| \geq 1\} \cap \Omega} W'(u_\e)^2\right)\\
	&\leq C\rho^{p_3\gamma-1}\left(\frac{\mu_\e(B_{2\rho}(x)\cap S_{t_1}^{t_2})}{\rho^n}\right)+C\e\rho^{-M\gamma-n-1}\Lambda\e\rho^{(1-\beta)(n-1)}+C\e\rho^{\gamma-2}\\
	&\leq C\rho^{p_3\gamma-1}\left(\frac{\mu_\e(B_{2\rho}(x)\cap S_{t_1}^{t_2})}{\rho^n}\right)+C\rho^{2-M\gamma-n-1+(n-1)-\beta(n-1)}+C\rho^{\gamma-1}\\
	&\leq C\rho^{p_3\gamma-1}+C\rho^{-1+\frac{1-M\gamma}{2}}+C\rho^{\gamma-1},
	\end{align*}
where we used \eqref{UpperRatioBoundEpsilonEnergy} and $\e\leq\rho$ in the last line.
By \eqref{ConicalityTermBound} in the proof of Theorem \ref{DensityUpperBound} and \eqref{UpperRatioBoundEpsilonEnergy}, we have
	\begin{align*}
	\rho^{-n-1}\left|\int_{B_\rho(x)\cap S_{t_1}^{t_2}}f_\e\langle y-x,\nabla u_\e\rangle\right|\leq C\rho^{-\frac{n}{q_0}}\left(1+\frac{\mu_\e(B_{\rho}\cap S_{t_1}^{t_2})}{\rho^n}\right)\leq\tilde C\rho^{-\frac{n}{q_0}}.
	\end{align*}
											By integrating \eqref{WeightedMonotonicitySlab} from $d$ to $R$ and noting $B_d(x)\subset S_{t_1}^{t_2}$,
we obtain the following upper bound of energy density for $\mu_\e$,
	\begin{align*}
	d^{-n}\mu_\e(B_d(x))=d^{-n}\mu_\e(B_d(x)\cap S_{t_1}^{t_2})\leq R^{-n}\mu_\e(B_R(x)\cap S_{t_1}^{t_2})+ CR^{\gamma_0}+2\omega,
	\end{align*}
where $\gamma_0=\min\{\frac{q_0-n}{q_0},p_3\gamma,\frac{1-M\gamma}{2},\gamma\}>0$. This proves \eqref{DensityBoundInSlab}. \\
	Next, if $X$ contains more than one point, then we can choose $x_\pm\in X$ such that $x_{+,n+1}-x_{-,n+1}>\frac{\mathrm{diam} X}{N}$ (where $x_{\pm,n+1}$ denotes the $(n+1)$-th coordinate of $x_\pm$) and there is no other element of $X$ in $\{0\}\times(x_{-,n+1},x_{+,n+1})$.
Let $\tilde t_1:=x_{-,n+1}+\frac{x_{+,n+1}-x_{-,n+1}}{3}$ and $\tilde t_2:=x_{+,n+1}-\frac{x_{+,n+1}-x_{-,n+1}}{3}$. For $x\in X, y\in B_\rho(x), d\leq\rho\leq\tilde R$, we have
	\begin{align*}
	|\mathcal S_{y,x}|&=\left|(y_{n+1}-x_{n+1})\left(\frac{\e|\nabla u_\e|^2}{2}+\frac{W(u_\e)}{\e}\right)-\e\frac{\partial u_\e}{\partial x_{n+1}}\langle y-x,\nabla u_\e\rangle\right|\\
	&=\left|(y_{n+1}-x_{n+1})\left(\frac{W(u_\e)}{\e}-\frac{\e|\nabla u_\e|^2}{2}\right)+|(y_{n+1}-x_{n+1})\e|\nabla u_\e|^2-\e\frac{\partial u_\e}{\partial x_{n+1}}\langle y-x,\nabla u_\e\rangle\right|\\
	&\leq\rho\left|\frac{\e|\nabla u_\e|^2}{2}-\frac{W(u_\e)}{\e}\right|+\e|\nabla u_\e|^2|\langle y-x,e_{n+1}\rangle-\langle y-x,\nu_\e\rangle\langle e_{n+1},\nu_\e\rangle|\\
	&\leq\rho\left|\frac{\e|\nabla u_\e|^2}{2}-\frac{W(u_\e)}{\e}\right|+\e|\nabla u_\e|^2|y-x|\sqrt{1-\nu_{\e,n+1}^2}\\
	&\leq\rho\left|\frac{\e|\nabla u_\e|^2}{2}-\frac{W(u_\e)}{\e}\right|+\rho\e|\nabla u_\e|^2\sqrt{1-\nu_{\e,n+1}^2}.
	\end{align*}
And thus by condition \eqref{SmallDiscrepancyAndTilt}, we have
	\begin{align*}
	&\int_{\tilde t_1}^{\tilde t_2}\int_d^{\tilde R}\rho^{-n-1}\int_{B_\rho(x)\cap\{y_{n+1}=t\}}\left|\mathcal S_{y,x}\right|d\mathcal H^n_{\{y_{n+1}=t\}}d\rho dt\\
	&=\int_d^{\tilde R}\rho^{-n-1}\int_{B_\rho(x)\cap S_{\tilde t_1}^{\tilde t_2}}\left|(y_{n+1}-x_{n+1})\left(\frac{\e|\nabla u_\e|^2}{2}+\frac{W(u_\e)}{\e}\right)-\e\frac{\partial u_\e}{\partial x_{n+1}}\langle y-x,\nabla u_\e\rangle\right|dyd\rho\\
	&\leq\int_d^{\tilde R}\rho^{-n}\int_{B_\rho(x)\cap S_{\tilde t_1}^{\tilde t_2}}\left|\frac{\e|\nabla u_\e|^2}{2}-\frac{W(u_\e)}{\e}\right|+\e|\nabla u_\e|^2\sqrt{1-\nu_{\e,n+1}^2}dyd\rho\\
	&\leq\int_d^{\tilde R}\rho^{-n}\omega\rho^nd\rho\\
	&\leq\omega\tilde R.
	\end{align*}
So there must exist a $t_3\in[\tilde t_1,\tilde t_2]$ such that
	\begin{align*}
	&\int_d^{\tilde R}\rho^{-n-1}\int_{B_\rho(x)\cap\{y_{n+1}=t_3\}}\left|\mathcal S_{y,x}\right|d\mathcal H^n_{\{y_{n+1}=t\}}d\rho\\
	&\leq\frac{\omega\tilde R}{(\tilde t_2-\tilde t_1)}\leq\frac{3N\omega\tilde R}{\mathrm{diam}(X)}=3N\Gamma\omega.
	\end{align*}
By the choice of $t_3\in[\tilde t_1,\tilde t_2]$, we automatically have $|x_{n+1}-t_3|>d$ for all $x\in X$. Finally, by denoting
	\begin{align*}
	X_+:=\{x\in X,x_n\geq t_3\}, X_-:=\{x\in X,x_n< t_3\},
	\end{align*}
we have $X_\pm\neq\emptyset$ and
	\begin{align*}
	\left(\cup_{x\in X_-}B_{\tilde R}(x)\cap S_{t_1}^{t_3}\right)\cup \left(\cup_{x\in X_+}B_{\tilde R}(x)\cap S_{t_3}^{t_2}\right)\subset B_{\tilde R+\mathrm{diam}(X)}(x_0)\cap S_{t_1}^{t_2},
	\end{align*}
for any $x_0\in X$.
By \eqref{DensityBoundInSlab}(with $\tilde R+\mathrm{diam}(X)$ in place of $d$), we then have
	\begin{align*}
	&\tilde R^{-n}\left(\mu_\e\left(\cup_{x\in X_-}B_{\tilde R}(x)\cap S_{t_1}^{t_3}\right)+\mu_\e \left(\cup_{x\in X_+}B_{\tilde R}(x)\cap S_{t_3}^{t_2}\right)\right)\\
	&\leq \tilde R^{-n}\mu_\e(B_{\tilde R+\mathrm{diam}(X)}(x_0)\cap S_{t_1}^{t_2})\\
	&=\left(1+\frac{1}{\Gamma}\right)^n(\tilde R+\mathrm{diam}(X))^{-n}\mu_\e(B_{\tilde R+\mathrm{diam}(X)}(x_0)\cap S_{t_1}^{t_2})\\
	&\leq\left(1+\frac{1}{\Gamma}\right)^n\left(R^{-n}\mu_\e(B_R(x_0)\cap S_{t_1}^{t_2})+CR^{\gamma_0}+2\omega\right)\\
	&\leq\left(1+\frac{1}{\Gamma}\right)^n\left(R^{-n}\mu_\e(\cup_{x\in X}B_R(x)\cap S_{t_1}^{t_2})\right)+CR^{\gamma_0}+2\omega.
	\end{align*}
\end{proof}
The next Lemma taken from \cite{roger2006modified} shows the energy ratio at small scales are very close to the $1$-d solution.
\begin{lemma}[Lemma 5.5 of \cite{roger2006modified}]\label{AlmostEnergyQuantization}
Suppose the conditions in Theorem \ref{main} are satisfied. For any $\tau\in(0,1$),$\delta>0$ small, $\Lambda>0$ large, there exists $\omega>0$ sufficiently small and $L>1$ sufficiently large such that the following holds: Suppose $u_\e$ satisfies condition of Theorem \ref{main} in $B_{4L\e}(0)\subset\mathbb R^{n+1}$ and
	\begin{align}
	&|u_\e(0)|\leq 1-\tau\\
	\label{eqn_TiltDiscrepancyBoundTransition}&|\xi_\e(B_{4L\e}(0))|+\int_{B_{4L\e}(0)}\e|\nabla u_\e|^2\sqrt{1-\nu_{\e,n+1}^2}\leq\omega(4L\e)^n\\
	\label{eqn_WillmoreBoundTransition}&\frac{1}{\e}\int_{B_{4L\e}(0)}|f_\e|^2\leq\Lambda(4L\e)^{n-2}\\
	\label{eqn_EnergyBoundTransition}&\mu_\e({B_{4L\e}(0)})\leq\Lambda(4L\e)^n.
	\end{align}
Then by denoting $(0,t)\in\mathbb R^{n+1}$ to be the point with first $n$-th coordinate functions being $0$ and the $(n+1)$-th coordinate functions being $t$, we have
	\begin{align}\label{LayerSeparation}
	&|u(0,t)|\geq1-\frac{\tau}{2},\quad \text{ for all $L\e\leq|t|\leq 3L\e$}\\\label{WholeEnergyQuantization}
&\left|\frac{1}{\omega_n(L\e)^n}\mu_\e(B_{L\e}(0))-\alpha\right|\leq\delta\\\label{PotentialEnergyQuantization}
&\left|\int_{-L\e}^{L\e}W(u_\e(0,t))dt-\frac{\alpha}{2}\right|\leq\delta.
	\end{align}
\end{lemma}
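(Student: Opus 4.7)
The plan is a contradiction and blow-up argument, modelled on Roger--Sch\"atzle's original Lemma 5.5. Fix $\tau, \delta, \Lambda$ and suppose the lemma fails: pick sequences $\omega_k \downarrow 0$, $L_k \uparrow \infty$, $\e_k \downarrow 0$ and solutions $u_{\e_k}, f_{\e_k}$ satisfying the hypotheses with parameters $(\omega_k, L_k, \Lambda)$ but failing at least one of \eqref{LayerSeparation}--\eqref{PotentialEnergyQuantization}. Rescale to unit scale by setting $\tilde u_k(x) := u_{\e_k}(\e_k x)$ and $\tilde f_k(x) := \e_k f_{\e_k}(\e_k x)$ on $B_{4L_k}(0)$. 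Then $\Delta \tilde u_k - W'(\tilde u_k) = \tilde f_k$, $\|\tilde u_k\|_{L^\infty} \leq c_0$, and $|\tilde u_k(0)| \leq 1-\tau$; the hypotheses \eqref{eqn_TiltDiscrepancyBoundTransition}, \eqref{eqn_WillmoreBoundTransition}, \eqref{eqn_EnergyBoundTransition} translate into bounded $L^2$ control of $\tilde f_k$ (which in fact satisfies $\|\tilde f_k\|_{L^2(B_R)} \to 0$ on any fixed $B_R$, thanks to \eqref{ConditionWillmoreTermDecay} and the smallness of $\e_k$), uniform bounds on the rescaled energy density on $B_{4L_k}$, and rescaled discrepancy and tilt integrals with density $\omega_k \to 0$.

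The $L^\infty$ bound and the PDE give, via standard Calder\'on--Zygmund bootstrapping, uniform $C^{1,\alpha}_{loc}$ estimates for $\tilde u_k$. Up to a diagonal subsequence, $\tilde u_k \to \tilde u_0$ in $C^{1,\alpha}_{loc}(\mathbb{R}^{n+1})$ with $\|\tilde u_0\|_{L^\infty} \leq c_0$ and $|\tilde u_0(0)| \leq 1-\tau$. Strong convergence of $W'(\tilde u_k)$ combined with $\tilde f_k \to 0$ in $L^2_{loc}$ yields the homogeneous equation $\Delta \tilde u_0 = W'(\tilde u_0)$ on $\mathbb R^{n+1}$. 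The rescaled discrepancy and tilt bounds pass to the limit (using Corollary \ref{CorollaryEpsilonEnergyRatioBound} to control local energy ratios from the global one) to give $\tfrac{1}{2}|\nabla \tilde u_0|^2 = W(\tilde u_0)$ and $|\nabla \tilde u_0|^2 (1-\nu_{0,n+1}^2) = 0$ almost everywhere. The tilt condition forces $\tilde u_0$ to depend only on $x_{n+1}$, and combined with the equipartition and the ODE $\tilde u_0'' = W'(\tilde u_0)$, $|\tilde u_0(0)| \leq 1-\tau$, $|\tilde u_0| \leq c_0$, standard phase-plane analysis for the double-well potential $W$ forces $\tilde u_0(\,\cdot\,) = \pm\tanh(\,\cdot\, - t_0)$ with a bounded translation $|t_0| \leq T(\tau)$.

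For this explicit heteroclinic $\tilde u_0$, the computations $\int_{\mathbb R}\bigl(\tfrac{|\tilde u_0'|^2}{2}+W(\tilde u_0)\bigr)\,dt = \alpha$, $\int_{\mathbb R} W(\tilde u_0)\,dt = \alpha/2$, and exponential convergence $|\tilde u_0(t)| \to 1$, together with the volume asymptotics showing $(\omega_n L^n)^{-1}$ times the vertical-sheet energy on $B_L$ tends to $\alpha$ as $L\to\infty$, imply that \eqref{LayerSeparation}--\eqref{PotentialEnergyQuantization} all hold for $\tilde u_0$ with margin once $L \geq L^*(\tau,\delta)$. Transferring back to $\tilde u_k$ (and thence to $u_{\e_k}$ after rescaling) via $C^{1,\alpha}_{loc}$ convergence on $B_{3L^*+1}$, the same three inequalities hold for $\tilde u_k$ with tolerance $\delta$ for all $k$ sufficiently large (so that $L_k \geq L^*$, $\omega_k$ is small, and $\e_k$ is small), contradicting the failure assumption. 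The main technical obstacle is ensuring that the discrepancy, tilt, and $L^2(\tilde f_k)$ bounds localize from the global scale $B_{4L_k}$ to fixed compact scales $B_R$ in the limit: the given bounds have densities set against $(4L_k)^n$ or $(4L_k)^{n-2}$, and one must leverage Corollary \ref{CorollaryEpsilonEnergyRatioBound} together with \eqref{ConditionWillmoreTermDecay} (with $\e_k$ chosen small relative to $L_k$) to pass to a genuine local limit on any fixed compact set.
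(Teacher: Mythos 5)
The paper's proof fixes $L$ first, using the one-dimensional profile to choose $L=L(\tau,\delta)$ so that the three conclusions hold with margin $\delta/2$ for the heteroclinic, and only \emph{then} runs a compactness argument over a sequence $\omega_j\to 0$ with all analysis confined to the \emph{fixed} ball $B_{3L}(0)$. Your proposal instead negates the statement by sending both $\omega_k\to 0$ and $L_k\to\infty$ simultaneously and rescaling to $B_{4L_k}$. This is a genuinely different setup and it creates gaps that the paper's fixed-$L$ strategy is designed to avoid.

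The first gap is localization. After rescaling, hypothesis \eqref{eqn_TiltDiscrepancyBoundTransition} gives
\begin{align*}
|\tilde\xi_k|(B_{4L_k}) + \int_{B_{4L_k}}|\nabla\tilde u_k|^2\sqrt{1-\nu_{k,n+1}^2} \;\leq\; \omega_k\,(4L_k)^n ,
\end{align*}
and when $L_k\to\infty$ the right-hand side need not vanish, so you cannot conclude the discrepancy and tilt vanish on a fixed $B_R$. Corollary \ref{CorollaryEpsilonEnergyRatioBound} only controls \emph{energy} ratios at smaller scales by energy ratios at larger ones; it gives no control of the discrepancy or the tilt excess on $B_R$ in terms of their global bounds, so your appeal to it does not close this gap. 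When $L$ is fixed (the paper's choice), $\omega_j(4L)^n\to 0$ trivially and no localization is needed.

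The second gap is at the end. Conclusion \eqref{LayerSeparation} must hold for $\tilde u_k$ on the band $L_k \leq |t|\leq 3L_k$, and conclusions \eqref{WholeEnergyQuantization}--\eqref{PotentialEnergyQuantization} involve $B_{L_k}$ and $[-L_k,L_k]$, all at the growing scale $L_k$. Compact convergence $\tilde u_k\to\tilde u_0$ on $B_{3L^*+1}$ for a fixed $L^*$ does not yield these estimates at scales escaping to infinity, so your ``transfer back'' step fails to produce a contradiction with the assumed failure at scale $L_k$. Again this is precisely why the paper fixes $L$ at the outset.

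Two smaller points: the uniform $C^{1,\alpha}_{loc}$ claim is more than the available elliptic regularity delivers (with $\tilde f$ only in $L^{\frac{n+1}{2}+\delta_0}$ one gets $W^{2,\frac{n+1}{2}+\delta_0}\hookrightarrow C^{0,\alpha}$, not $C^{1,\alpha}$; the paper establishes strong $W^{1,2}$ convergence separately from the discrepancy and tilt estimates, not from elliptic theory). And your invocation of \eqref{ConditionWillmoreTermDecay} to get $\tilde f_k\to 0$ in $L^2_{loc}$ imports an assumption that is specific to the blow-up sequence chosen in Proposition \ref{ReductionOfIntegrality} and is not a hypothesis of the lemma; the correct source of decay of the rescaled forcing is the global structural condition, via the $\|\tilde f\|_{L^{\frac{n+1}{2}+\delta_0}}\lesssim \e^{(q_0-n)/q_0}$ bound from Theorem \ref{GradientBound}. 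If you fix $L$ first and run the compactness only on the fixed ball $B_{3L}$, your argument aligns with the paper's and the remaining claims become straightforward to justify.
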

\begin{proof}
First we consider the $1$-dimensional solution
	\begin{align*}
	q_0'(t)&= \sqrt{W(q_0(t))}\quad \forall t \in \mathbb R, \\
	q_0(0)&= u(0).
	\end{align*}
We will use $q_0$ to choose $L$ depending on $ \tau, \delta > 0$. On $ \mathbb R^{n+1}$ we write $q(x) = q_0(x_{n+1})$ and choose $ L >1$ large enough depending on $ \tau, \delta$ so that
	\begin{align}\label{eqn_1DEstimates}
	\begin{split}
|q(0,t)|&\geq 1 - \frac{\tau}{3}, \quad \text{ for all $ L \leq |t| \leq 3 L$},\\
	\left| \frac{1}{\omega_n L^{n-1}}\int_{B_L(0)} \left( \frac{|\nabla q|^2}{2} + W(q) \right) - \alpha\right|&\leq \frac{\delta}{2}\\
	\left| \int_{-L}^L W(q(0,t)) dt - \frac{\alpha}{2} \right|&\leq \frac{\delta}{2}
\end{split}
	\end{align}
whenever $|q(0)| \leq 1 - \tau$.
		The function $u$ satisfies the Allen--Cahn equation
	\begin{align*}
	-\Delta u + W'(u) = f ,
	\end{align*}
and by our condition $(2)$ in Theorem \ref{main} we get $\|u_\e\|_{L^\infty(B_{1/2}(x))}\leq c_0$.
Hence by Calderon--Zygmund estimates we get uniform $W^{2,\frac{n+1}{2} + \delta_0}$ estimates on $B_{3L}(0)$ of the form
	\begin{align}\label{eqn_W2}
	\| u\|_{W^{2,\frac{n+1}{2} + \delta_0}(B_{3L}(0))} \leq C(\Lambda,L).
	\end{align}
If there is no such $\omega>0$ such that \eqref{LayerSeparation}, \eqref{WholeEnergyQuantization} and \eqref{PotentialEnergyQuantization} holds then this implies there exists $\omega_j\rightarrow 0$ and $u_j, f_j$ satisfying the above estimates but that do not satisfy \eqref{LayerSeparation}, \eqref{WholeEnergyQuantization} and \eqref{PotentialEnergyQuantization}. By \eqref{eqn_W2}, we get after passing to a suitable subsequence that $ u_j \rightharpoonup u$ weakly in $W^{2,\frac{n+1}{2} + \delta_0}(B_{3L}(0)) $ and $ f_j \rightharpoonup f $ weakly in $L^{\frac{n+1}{2} + \delta_0}(B_{3L}(0))$.
By the Sobolev embedding we have $W^{2,\frac{n+1}{2} + \delta_0}(B_{3L}(0)) \hookrightarrow C^0 $ for $\delta_0>0$ and hence we get $u_j\rightarrow u$ uniformly in $C^0(B_{3L}(0))$.
\begin{claim}
The functions $ u_j\rightarrow u = q $ strongly in $W^{1,2}(B_{3L}(0))$.
\end{claim}
\begin{proof}
Writing $ \nabla =(\nabla', \partial_{n+1})$ we get \eqref{eqn_TiltDiscrepancyBoundTransition}
	\begin{align*}
	\int_{B_{3L(0)}}\left| \frac{|\nabla u|^2}{2} - W(u)\right|&\leq \liminf_{j\rightarrow \infty} \int_{B_{3L}(0)} \left| \frac{|\nabla u_j|^2}{2} - W(u_j)\right| \\
	&\leq \liminf_{j\rightarrow \infty}|\xi_j |(B_{3L}(0)) = 0
	\end{align*}
and
	\begin{align*}
	\int_{B_{3L}(0)} |\nabla ' u |&\leq \liminf_{j\rightarrow \infty} \int_{B_{3L}(0)} |\nabla ' u_j | \leq C(L) \left(\int_{B_{3L}(0)} |\nabla u_j|^2 \sqrt{1 - \nu_{j,n}^2} \right)^{1/2} =0,
	\end{align*}
where $ \nu_j = \frac{\nabla u_j}{|\nabla u|}$ for $ \nabla u_j \neq 0$. Therefore $ |\nabla u|^2 = 2 W(u)$ and $ u(y,t) = u_0(t)$ for some $ u_0 \in W^{2,\frac{n+1}{2} + \delta_0}((-L,L))\hookrightarrow C^{1,\alpha}((-L,L))$ and $ | u'_0| = 2 \sqrt{2W(u_0)}$. As $ |u_0(0)| \leq 1 - \tau$ by uniform convergence, we see $ |u_0| < 1$ and $ |u'_0| > 0$. After a reflection of the form $(y,x_n)\mapsto(y,-x_n)$ if necessary, we may assume $u_0'>0$ and hence $ u_0'= \sqrt{2W(u_0)}$. This gives us $ u_0 = q_0$ and $ u = q$. This shows $ u_j\rightarrow u = q $ strongly in $W^{1,2}(B_{3L}(0))$.
\end{proof}
From this claim and \eqref{eqn_1DEstimates} we conclude $u_j$ satisfies \eqref{LayerSeparation}, \eqref{WholeEnergyQuantization} and \eqref{PotentialEnergyQuantization} for sufficiently large $j$ which is a contradiction.
\end{proof}
Now we prove Proposition \ref{ReductionOfIntegrality}.
\begin{proof}[Proof of Proposition \ref{ReductionOfIntegrality}]
Without loss of generality, we assume $P_0=\{x\in\mathbb R^{n+1},x_{n+1}=0\}$ and let $ \pi:\mathbb R^{n+1}\rightarrow P_0$ denote the associated orthogonal projection.
 Furthermore we know
	\begin{align*}
	V_\e = \mu_\e \otimes \nu_\e \rightarrow V
	\end{align*}
is rectifiable and
	\begin{align*}
	\mu_V&= \mu\\
	 V&= \theta_0 \mathcal H^n \llcorner P_0 \otimes \delta_{P_0}
 \end{align*}
 and
 \begin{equation}\label{eqn_TiltDecay}
 \lim_{\e\rightarrow 0}\int_{B_4(0)} \e |\nabla u_\e|^2 \sqrt{1 - \nu_{\e, n+1}^2} = 0.
\end{equation}
Let $ N\in \mathbb N$ be the smallest integer with
\begin{align*}
N > \frac{\theta_0}{\alpha}
	\end{align*}
and let $ 0 < \delta\leq 1$ be small.
By Proposition \ref{prop_errorterms} and the $L^\infty$ bound condition of $u_\e$ in Theorem \ref{main}, we can fix $ \tau >0$ such that $\forall \e(\delta)>0$ sufficiently small we have
	\begin{align*}
	\int_{\{|u_\e|\geq 1 -\tau\} \cap B_4(0)}\frac{W'^2(u_\e)}{\e}+\frac{W(u_\e)}{\e}\leq \delta.
	\end{align*}
We have by Lemma \ref{L1PositiveDiscrepancyVanishing},
	\begin{align}\label{EnergyBoundAwayFromTransition}
	\mu_\e(\{|u_\e|\geq 1 -\tau \cap B_4(0)\}) \leq | \xi_\e(B_4(0))|+ 2\int_{\{|u_\e|\geq 1 -\tau\} \cap B_4(0)}\frac{W(u_\e)}{\e} \leq 3 \delta.
	\end{align}
We want to apply Lemma \ref{MultiSheetMonotonicity} and Lemma \ref{AlmostEnergyQuantization}.
We choose $ 0 < \omega =\omega(N,\delta, \frac{1}{2},\frac{1}{2}, C)$ and $ \omega(\delta, \tau, C) \leq 1$ where $L = L(\delta, \tau)$ which are the parameters that appear in Lemma \ref{MultiSheetMonotonicity} and Proposition \ref{prop_MultiSheetMonotonicity} and $C$ is the constant so that
	\begin{align*}
	\mu_\e(\Omega) + \frac{1}{\e}\int_{\Omega}|f_\e|^2 \leq C \quad \Omega = B_4(0).
	\end{align*}
We define $A_\e$ to be the set where the hypotheses for our Propositions hold, that is
	\begin{align*}
	A_\e =
\left\{x \in B_1(0) \left|
\begin{array}{l}
|u_\e(x)|\leq 1 - \tau, \\
	\forall \e \leq \rho \leq 3 : |\xi_\e(B_{\rho}(x))|+\int_{B_{\rho}(x)}\e|\nabla u_\e|^2\sqrt{1-\nu_{\e,n+1}^2} \leq \omega \rho^n, \\
	\forall \e \leq \rho \leq 3 : \frac{1}{\e}\int_{B_\rho(x)}|f_\e|^2 \leq \omega \rho^{n-1}.
\end{array}
\right.
\right\}.
	\end{align*}
We show the complement of the set $ A_\e$ has small measure.
By Besicovitch's covering theorem, we find a countable sub-covering $\cup_iB_{\rho_i}(x_i),\rho_i\in[\e,3]$ of $\{|u_\e|\leq1-\tau\}\setminus A_\e$ such that every point $x\in\{|u_\e|\leq1-\tau\}\setminus A_\e$ belongs to at most $\mathbf B_n$ balls in the covering, where $\mathbf B_n$ depends only on the dimension $n$. For each $i$, either
	\begin{align*}
	|\xi_\e(B_{\rho_i}(x_i))|+\int_{B_{\rho_i}(x_i)}\e|\nabla u_\e|^2\sqrt{1-\nu_{\e,n+1}^2} \geq \omega \rho_i^n,
	\end{align*}
or
	\begin{align*}
	\frac{1}{\e}\int_{B_{\rho_i}(x_i)}|f_\e|^2 \geq \omega \rho_i^{n-1}\geq C\omega\rho_i^n.
	\end{align*}
On the other hand, by \eqref{ConditionWillmoreTermDecay}, for sufficiently small $\e$, we have
	\begin{align*}
	\frac{1}{\e}\int_{B_{\rho}(x_i)}|f_\e|^2 \leq \omega \rho^{n-1}, \forall \rho\in[\e,3].
	\end{align*}
By \eqref{UpperRatioBoundEpsilonEnergy}, for each $i$, we obtain
	\begin{align*}
	\mu_\e\left(\overline {B_{\rho_i}(x_i)}\right)\leq C\rho_i^n.
	\end{align*}
Since the overlap in the Besicovitch covering is finite and \eqref{EnergyBoundAwayFromTransition}, we get
	\begin{align}\label{SmallnessAwayFromA}
	\begin{split}
\mu_\e\left(B_1(0)\setminus A_\e\right)&\leq3\delta+\sum_iC\rho_i^n\\
	&\leq3\delta+C\omega^{-1}\left(|\xi_\e|(B_4(0))+\int_{B_4(0)}\e|\nabla u_\e|^2\sqrt{1-\nu_{\e,n+1}^2}+\frac{1}{\e}\int_{B_4(0)}|f_\e|^2\right)\\
	&\leq4\delta,
\end{split}
	\end{align}
for $\e$ sufficiently small.
First by Lemma \ref{MultiSheetMonotonicity} and Lemma \ref{AlmostEnergyQuantization} we have $x\in A_\e, \forall L\e\leq R\leq\omega$,
	\begin{align*}
	\alpha\omega_n-\delta\leq(1+\delta)R^{-n}\mu_\e(B_R(x))+\delta.
	\end{align*}
By the reduction to the conditions in Proposition \ref{ReductionOfIntegrality}, we obtain
	\begin{align*}
	\mu_\e\left(\Omega\setminus\{|x_{n+1}|\leq\zeta\}\right)\rightarrow0, \quad \text{ for any fixed $\zeta>0$}.
	\end{align*}
Thus, for sufficiently small $\delta>0$, we get
	\begin{align*}
	A_\e\subset\{|x_{n+1}|\leq\zeta_\e\},\quad\text{ with $\zeta_\e\rightarrow0$ as $\e\rightarrow0$}.
	\end{align*}
For any $\hat y\in B^n_1(0)\subset\mathbb R^n$, consider a maximal subset
	\begin{align*}
	X=\{y\}\times\{t_1<...<t_K\}\subset A_\e\cap \pi^{-1}(y)
	\end{align*}
with $|x-x'|>3L\e$ if $x\neq x' \in X$, where $\pi$ denotes the projection to $\{x_{n+1}=0\}$.
If $K\geq N$, we apply Lemma \ref{MultiSheetMonotonicity} with $d=3L\e, R=\omega$ and Lemma \ref{AlmostEnergyQuantization} to get
	\begin{align*}
	N\alpha\omega_n-N\delta\leq(1+\delta)R^{-n}\mu_\e\left(B_R(y)\right)+\delta\leq(1+\delta)R^{-n}\mu_\e\left(B_{R+\zeta_\e}(y)\right)+\delta.
	\end{align*}
As
	\begin{align*}
	 \limsup_{\e\rightarrow0}(1+\delta)R^{-n}\mu_\e\left(B_{R+\zeta_\e}(y)\right)\leq R^{-n}\mu(\overline{B_R(y)})+C\delta=\theta\omega_n+C\delta,
	\end{align*}
and $\delta>0$ is arbitrarily small, we have
	\begin{align*}
	N\alpha\leq\theta,
	\end{align*}
which is a contradiction to our definition of $N$. So we obtain
	\begin{align*}
	K\leq N-1.
	\end{align*}
Since $X$ is maximal, we get
	\begin{align*}
	A_\e\cap\pi^{-1}(y)\subset\{y\}\times\cup_{k=1}^K(t_k-3L\e,t_k+3L\e).
	\end{align*}
By \eqref{LayerSeparation},
	\begin{align*}
	A_\e\cap\pi^{-1}(y)\cap&\left(\{y\}\times\cup_{k=1}^K(t_k-3L\e,t_k+3L\e)\right)\\
	&=A_\e\cap\pi^{-1}(y)\cap\left(\{y\}\times\cup_{k=1}^K(t_k-L\e,t_k+L\e)\right).
	\end{align*}
So
	\begin{align*}
	A_\e\cap\pi^{-1}(y)\subset\{y\}\times\cup_{k=1}^K(t_k-L\e,t_k+L\e)
	\end{align*}
and by \eqref{PotentialEnergyQuantization},
	\begin{align*}
	\int_{t_k-L\e}^{t_k+L\e}\frac{W(u_\e(y,t))}{\e}dt\leq\frac{\alpha}{2}+\delta, \quad \forall k=1,...,K.
	\end{align*}
Hence summing over $k$ gives
	\begin{align*}
	\int_{A_\e\cap\pi^{-1}(y)}\frac{W(u_\e)}{\e}d\mathcal H^1\leq\frac{(N-1)\alpha}{2}+(N-1)\delta
	\end{align*}
and integrating over $B^n_1(0)\subset\mathbb R^n$ we obtain
	\begin{align*}
	\int_{B_1^{n+1}(0)\cap A_\e}\frac{1}{\e}W(u_\e)d\mathcal H^{n+1}\leq\int_{B_1^n(0)}\int_{A_\e\cap\pi^{-1}(y)}\frac{W(u_\e)}{\e}d\mathcal H^1dy\leq\frac{(N-1)\alpha\omega}{2}+C\delta.
	\end{align*}
Recalling \eqref{SmallnessAwayFromA}, we get
	\begin{align*}
	\mu_\e\left(B_1(0)\right)&\leq \int_{B_1^{n+1}(0)\cap A_\e}\frac{1}{\e}W(u_\e)d\mathcal H^{n+1}+|\xi_\e\left(B_1(0)\right)|+\mu_\e\left(B_1(0)\setminus A_\e\right)\\
	&\leq(N-1)\alpha\omega_n+C\delta.
	\end{align*}
On the other hand, since $\lim_{\e\rightarrow0}\mu_\e\left(B_1(0)\right)=\theta\omega_n$ and $\delta>0$ is arbitrarily small, we obtain
	\begin{align*}
	\theta\leq(N-1)\alpha.
	\end{align*}
And since by definition $N$ is the smallest integer such that $\theta<N\alpha$, we have
	\begin{align*}
	\theta=(N-1)\alpha.
	\end{align*}
\end{proof}
\bibliography{AllardAllenCahn.bib}

\begin{thebibliography}{Mod87}

\bibitem[All72]{Allard1972}
William~K. Allard.
\newblock On the first variation of a varifold.
\newblock {\em Ann. of Math. (2)}, 95:417--491, 1972.

\bibitem[DG79]{DeGiorgi1979}
Ennio De~Giorgi.
\newblock Convergence problems for functionals and operators.
\newblock In {\em Proceedings of the {I}nternational {M}eeting on {R}ecent
  {M}ethods in {N}onlinear {A}nalysis ({R}ome, 1978)}, pages 131--188.
  Pitagora, Bologna, 1979.

\bibitem[HT00]{Hutchinson2000}
John~E. Hutchinson and Yoshihiro Tonegawa.
\newblock Convergence of phase interfaces in the van der
  {W}aals-{C}ahn-{H}illiard theory.
\newblock {\em Calc. Var. Partial Differential Equations}, 10(1):49--84, 2000.

\bibitem[Ilm93]{Ilmanen1993}
Tom Ilmanen.
\newblock Convergence of the {A}llen-{C}ahn equation to {B}rakke's motion by
  mean curvature.
\newblock {\em J. Differential Geom.}, 38(2):417--461, 1993.

\bibitem[MM77]{Modica1977}
Luciano Modica and Stefano Mortola.
\newblock Un esempio di {$\Gamma ^{-}$}-convergenza.
\newblock {\em Boll. Un. Mat. Ital. B (5)}, 14(1):285--299, 1977.

\bibitem[Mod87]{Modica1987}
Luciano Modica.
\newblock The gradient theory of phase transitions and the minimal interface
  criterion.
\newblock {\em Arch. Rational Mech. Anal.}, 98(2):123--142, 1987.

\bibitem[RS06]{roger2006modified}
Matthias R\"{o}ger and Reiner Sch\"{a}tzle.
\newblock On a modified conjecture of {D}e {G}iorgi.
\newblock {\em Math. Z.}, 254(4):675--714, 2006.

\bibitem[Ste88]{Sternberg1988}
Peter Sternberg.
\newblock The effect of a singular perturbation on nonconvex variational
  problems.
\newblock {\em Arch. Rational Mech. Anal.}, 101(3):209--260, 1988.

\bibitem[Ton03]{tonegawa2003integrality}
Yoshihiro Tonegawa.
\newblock Integrality of varifolds in the singular limit of reaction-diffusion
  equations.
\newblock {\em Hiroshima mathematical journal}, 33(3):323--341, 2003.

\bibitem[TT20]{Tonegawa2020}
Yoshihiro Tonegawa and Yuki Tsukamoto.
\newblock A diffused interface with the advection term in a {S}obolev space.
\newblock {\em Calc. Var. Partial Differential Equations}, 59(6):Paper No. 184,
  23, 2020.

\bibitem[Zie89]{Ziemer1989}
William~P. Ziemer.
\newblock {\em Weakly differentiable functions}, volume 120 of {\em Graduate
  Texts in Mathematics}.
\newblock Springer-Verlag, New York, 1989.
\newblock Sobolev spaces and functions of bounded variation.

\end{thebibliography}
\bibliographystyle{alpha}
\end{document}